\newcolumntype{R}[1]{>{\raggedleft\arraybackslash}p{#1}}
\newcolumntype{L}[1]{>{\raggedright\arraybackslash}p{#1}}
\numberwithin{equation}{section}
\renewcommand\subsection{\@startsection{subsection}{2}%
	\z@{.5\linespacing\@plus.7\linespacing}{.1em}%
	{\normalfont\bfseries}}
\newcommand{\E}{\mathbb{E}}
\newcommand{\id}{\mathds{1}}
\renewcommand{\epsilon}{\varepsilon}
\newcommand{\eps}{{\varepsilon}}
\renewcommand{\phi}{\varphi}
\newcommand{\R}{\mathbb{R}}
\newcommand{\Z}{\mathbb{Z}}
\newcommand{\N}{\mathbb{N}}
\newcommand{\pr}{\mathbb{P}}        
\newcommand{\Prob}{\mathbb{P}}        
\newcommand{\ex}{\mathbb{E}}        
\newcommand{\Exp}{\mathbb{E}}        
\newcommand{\var}{\textnormal{Var}} 
\newcommand{\cov}{\textnormal{Cov}} 
\newcommand{\MSE}{\textnormal{MSE}}
\newcommand{\Ac}{\mathcal{A}}
\newcommand{\Bc}{\mathcal{B}}
\newcommand{\Dc}{\mathcal{D}}
\newcommand{\Ec}{\mathcal{E}}
\newcommand{\Fc}{\mathcal{F}}
\newcommand{\Oc}{\mathcal{O}}
\newcommand{\Rc}{\mathcal{R}}
\newcommand{\Tc}{\mathcal{T}}
\newcommand{\Gum}{\textnormal{Gum}}
\newcommand{\argmin}{\textnormal{argmin}}
\newcommand{\diff}{{\,\mathrm{d}}}
\newcommand{\supp}{\textnormal{supp}}
\newcommand{\scs}{\scriptscriptstyle}             
\newcommand{\convd}{\rightsquigarrow}              
\newcommand{\convw}{\convd}                           
\newcommand{\weak}{\convd}              
\newcommand{\convp}{\stackrel{\Prob}{\longrightarrow}}              
\newtheorem{theorem}{Theorem}[section]
\newtheorem{proposition}[theorem]{Proposition}
\newtheorem{corollary}[theorem]{Corollary}
\theoremstyle{definition}
\newtheorem{assumption}[theorem]{Assumption}
\newtheorem{alg}[theorem]{Algorithm}
\newtheorem{remark}[theorem]{Remark}
\begin{document}

\title[Relevant changes in a gradually varying mean]{Are deviations in a gradually varying mean relevant? A testing approach based on sup-norm estimators}

\author{Axel B\"ucher}
\author{Holger Dette}
\author{Florian Heinrichs}

\date{\today}

\address{Heinrich-Heine-Universit\"at D\"usseldorf, Mathematisches Institut, Universit\"atsstr.~1, 40225 D\"usseldorf, Germany.}
\email{axel.buecher@hhu.de}

\address{Ruhr-Universit\"at Bochum, Fakult\"at f\"ur Mathematik, Universit\"atsstr.\ 150, 44780 Bochum, Germany.}
\email{holger.dette@rub.de}
\email{florian.heinrichs@rub.de}

\begin{abstract}
Classical change point analysis aims at (1) detecting abrupt changes in the mean of a possibly non-stationary time series and at (2) identifying regions where the mean exhibits a piecewise constant behavior. In many applications however, it is more reasonable to assume that the mean changes gradually in a smooth way. Those gradual changes may either be non-relevant (i.e., small), or relevant for a specific problem at hand, and the present paper presents statistical methodology to detect the latter. More precisely, we consider the common nonparametric regression model  $X_{i}  = \mu (i/n) + \varepsilon_{i}$  with possibly non-stationary errors and propose a test for the null hypothesis that the maximum absolute deviation  of  the regression function $\mu$  from a functional $g (\mu )$  (such as the value $\mu (0)$ or the integral $\int_{0}^{1} \mu (t) dt$) is smaller than a given threshold on a given interval $[x_{0},x_{1}] \subseteq [0,1]$. A test for this type of hypotheses is developed  using an appropriate estimator, say $\hat d_{\infty, n}$, for  the maximum deviation $ d_{\infty  }= \sup_{t \in [x_{0},x_{1}]} |\mu (t)  -g( \mu) |$. We derive the limiting distribution of an appropriately standardized version of $\hat d_{\infty,n}$, where the standardization depends on the Lebesgue measure of the set of extremal points of the function $\mu(\cdot)-g(\mu)$. A refined  procedure based on an estimate of this set is developed and its consistency is proved. The results are illustrated by means of a simulation study and a  data example. 
\end{abstract}

\maketitle

\vspace{-.5cm}

\noindent
 \textit{Key words:} relevant change point  analysis,  gradual changes, maximum deviation, local-linear estimator, Gumbel distribution, Gaussian approximation.

 \medskip\noindent
  \textit{AMS Subject classification:}   62M10,  62G08.


\section{Introduction} \label{sec1}

Change point analysis has found considerable interest in the last two decades
 because of its   numerous applications in  economics, climatology, engineering, hydrology, genomics,
 to mention just a  few.  Most of the recent results  are well documented in the reviews
by  \cite{auehor2013}, \cite{Jandhyala2013}, \cite{WoodallMontgomery2014}, \cite{Sharma2016}, \cite{ChakrabortiGraham2019}, and \cite{truong2020} among others.
In the simplest case one is interested in identifying  structural breaks  in a sequence of means $(\mu_i)_{i =1,\ldots ,n }$ 
of a possibly non-stationary time series  $(X_i)_{i =1,\ldots ,n }$. Often  the  data are modelled by a location scale model $X_i =  \mu (i /n)   + \varepsilon_i$ with a stationary error process $( \varepsilon_i)_{i =1,\ldots ,n }$ and a piecewise constant
 mean function $\mu : [0,1] \to \R$. A large amount of the literature on this problem refers to functions with exactly one change point
{\citep[see, e.\,g.][among others]{Priestley1969, Wolfe1984, Horvath1999}}
  but  more recently the problem of detecting multiple change points in a piecewise constant mean has also found considerable 
  attention 
 \citep[see, e.g.][among  many others]{frick2014,fryzlewicz2018a,detschvet2018,baranowski2019}. In  these cases  the null hypothesis 
 of no change point can be formulated as 
 \begin{align} \label{null}
H_0: \text{ there exists $\mu\in\R$ such that }\mu (t)  = \mu \text{ for all } t \in [0,1],
\end{align}
while, under the alternative, the authors make the assumption that the process is stationary on time spans with constant mean. This assumption simplifies
the statistical analysis of  structural breaks substantially.

While the assumption of a piecewise constant mean function is well justified in some applications
\citep[see for example][]{Aston2012, hotz2013, Cho2015, Kirch2015}, there
are also many other situations where  it is more realistic to assume that the function $\mu $ varies smoothly in the interval $[0,1]$. Typical examples include temperature data \citep[see, e.\,g.][]{karl1995,collins2000} or financial data (see, e.\,g., \citealp{Dette2014}). In these cases, it is more reasonable to assume that  the regression function changes smoothly 
and  one might be interested in deciding whether these changes  deviate  in some sense ``substantially'' from a given benchmark.
For example, if $\mu (0) $  denotes the initial mean at ``time'' $0$, it is often of interest whether the mean stays within a certain corridor of width $\Delta >0$, that is
\begin{equation} 
\label{h0initial} 
H_0:   d_{\infty} =  \sup_{t \in [0,1] }  | \mu (t) - \mu (0) | \leq \Delta  \quad\text{ vs. }\quad H_1:   
d_{\infty}  >  \Delta.
\end{equation}
Here $\Delta \geq 0 $ defines a pre-specified constant that is chosen according to the specific interests for the problem at hand. In case of rejection of $H_0$, a statistician may consider to subsequently  analyze the data with a time dependent 
 mean.   Note that 
 the null hypothesis in \eqref{null}  is obtained from \eqref{h0initial}  for $\Delta=0$. 
Other benchmarks could be
 used as well and it could also be reasonable to consider deviations on a sub-interval of $[0,1]$. For example, in climate research one might 
 be interested in significant deviations of  a  trend from an average trend in the past, and this problem  could be considered investigating the hypotheses
\begin{align} 
 \label{h0avl} 
H_0:     d_{\infty} =  \sup_{t \in [x_0,1] } \Big  | \mu (t) - \frac1{x_0}\int_{0}^{x_{0}}\mu (s)  \diff s \Big |  \leq \Delta  
~ \text{   vs.  } ~
H_1:   
d_{\infty} >  \Delta,
\end{align}
for some given fixed constant $x_0\in(0,1)$.
The consideration of  hypotheses of  the form  \eqref{h0initial}  or \eqref{h0avl} may also be motivated by the fact that the detection of structural breaks in the signal 
often results in an adaptation  of the statistical analysis (for example in forecasting).  
Because such an  analysis is usually performed ``locally'',  resulting estimators will have a smaller bias 
but a larger variance. However, if the changes in the signal are only weak, such an adaption might not be necessary
because a potential  decrease in bias might be overcompensated by an increase of variance.
  
 In this paper we develop statistical methodology to investigate hypotheses  
 of the form \eqref{h0initial}  and \eqref{h0avl}  (and further hypotheses of similar type)
 in a  location scale model with a  stationary error process and a smooth mean function.  
 Additionally, we also construct estimators for the first point in time where the null hypothesis is violated. 
 Note that this problem is related to  the economic design of control charts for quality control purposes
 which have their focus  on sequentially  detecting a change as quickly as possible after it occurs \citep[see, for example][]{champwood1987,woodmont1999}.
 However,  the focus of our approach here 
is on testing for the presence and identification  of the time  of a  change in a retrospective scenario. 
 Despite of its importance  - to  the best of our knowledge - not much work has been done   in this direction. The paper which is most similar in spirit to the problem considered here is the work
by \cite{dettewu2019}  who  define a change  in the mean of  a time series from its initial value as relevant if the amount of the change and the time period where the change is in place are reasonably large.
More precisely, these authors propose to test the hypothesis  that the Lebesgue measure of the set $\mathcal{M}_{\Delta} =\{ t \in [0,1] \colon   | \mu (t) -  \mu ({0}) | > \Delta  \} $ 
is smaller than a given value $c$, that is 
\[
\tilde H_{0}: \lambda (\mathcal{M}_{\Delta}) \leq c \quad \text{   vs.  }  \quad
\tilde H_1: \lambda (\mathcal{M}_{\Delta}) >   c.
\]
 In contrast to this, the formulation of  the hypotheses \eqref{h0initial} and \eqref{h0avl} defines a change as relevant whenever
the difference between the  mean function and the benchmark exceeds the threshold $\Delta$. The latter hypotheses are easier to interpret for  practitioners, however,
various challenging mathematical problems arise from this formulation.
While the Lebesgue measure of the set $\mathcal{M}_{\Delta} $ can be estimated by a mass excess approach and the corresponding statistic has 
an asymptotic normal distribution, such a simple limit distribution does not appear if one investigates  the  maximal deviation of the 
function $\mu $ from a  benchmark as formulated in the hypotheses \eqref{h0initial} or \eqref{h0avl}. 
More precisely, for the construction of a  test for these hypotheses 
we propose to  estimate  the  maximal deviation $d_\infty$ directly   and  to  
reject the null hypothesis for large values of its estimate.  In order to 
quantify the type~I and type II error, we investigate the asymptotic properties 
of the estimator, for any value $d_{\infty} \geq 0$, which depend sensitively on 
specific properties of the function $\mu$. 

In Section \ref{sec2},
we introduce a slightly more general version of the testing problems in 
\eqref{h0initial} and \eqref{h0avl} and propose an estimator for the respective 
maximum deviation. Several technical assumptions required 
for the asymptotic analysis are collected as well. 
 In  Section \ref{sec3},  we  show that the estimator (after appropriate standardization)  converges weakly to a non-degenerate limit distribution of the Gumbel type.
 The limit distribution  as well as the quantities used for standardization depend on the set of extremal points of the difference between the function $\mu$ and its benchmark, i.e., the set of points where the difference attains its sup-norm (see equation \eqref{extrem}  for a precise definition). The results in this section may already  be used to construct a simple consistent 
 and asymptotic level $\alpha$-test for the hypotheses \eqref{h0initial} or \eqref{h0avl}, but the test tends to be conservative if the Lebesgue measure of the set of extremal points is small.
In  Section \ref{sec4}, as a circumvention for the latter, we propose suitable estimators for the  set of extremal points,  prove that these are consistent and  use them  to develop  tests with a better approximation of the nominal level.  In Section~\ref{sec5}, we consider the estimation of the first point in time where the relevant change occurs.
 Section \ref{sec6} is then devoted to the detection of relevant change points in the signal to noise ratio of   a  
 location scale model with a non-stationary error process, while the finite sample properties of the proposed methodologies are investigated by means of a Monte Carlo simulation study in Section~\ref{sec7}. Finally, all proofs and technical details are deferred to an appendix.
 
		Throughout this paper, the symbol $\convw $ denotes weak convergence, and all convergences are for $n\to\infty$ if not mentioned otherwise.

\section{The general testing problem and mathematical preliminaries} \label{sec2}

We consider the common  location scale model
 \begin{align}\label{Lipschiss}
X_{i,n} =\mu(i/n)+\epsilon_{i}, \quad i=1,\dots,n,
\end{align}
 where $(\epsilon_{i})_{i\in \Z} $  denotes a stationary sequence of centered random variables and
$\mu : [0,1] \to \R$ is the unknown mean function.  
We are interested in detecting significant deviations  of the regression function $\mu$  on an interval $[x_{0},x_{1}] \subseteq [0,1]$ from a given benchmark. For this purpose,
we consider a real-valued functional $g$ on the space of all functions defined on the interval $[0,1]$ and define
the distance
\begin{equation}  \label{dinfty} 
 d_{\infty}:=  \sup_{t \in [x_0,x_{1}] } \big  | \mu (t) - g(\mu)  \big | 
 \end{equation}
 Note that this distance depends on the points $x_{0}, x_{1} \in [0,1] $ in the calculation of  the supremum and on the functional  $g$ used to define the benchmark, which is not reflected by our notation (as it will always be clear from the context).
 We are interested in the hypotheses
\begin{equation}
 \label{h0all} 
H_0:   d_{\infty}   \leq \Delta  \quad \text{   vs. }  \quad H_1:    d_{\infty} >  \Delta,
\end{equation}
where $\Delta>0$ is a given constant.
In particular, we obtain
\begin{compactitem}
\item the hypotheses \eqref{h0initial}  for the choice $g(\mu) = \mu (0)$, $x_{0}=0$ and $x_{1} =1$.
\item the hypotheses \eqref{h0avl}  for the choice $g(\mu) = (1/x_0) \int_{0}^{x_0}\mu (t) dt $ and $x_{1}  =1$.
\item the hypotheses of a relevant deviation from an overall mean by choosing $g(\mu) = \bar \mu = \int_{{0}}^{1}\mu (t) dt $, $x_{0}=0$ and $x_{1} =1$, that is
\begin{equation} 
 \label{h0full} 
H_0:    \sup_{t \in [0,1] } \big  | \mu (t) - \bar \mu \big |  \leq \Delta  \quad  \text{  vs. }  \quad H_1:   \sup_{t \in [0,1] } \big  | \mu (t) -  \bar \mu \big |   >  \Delta.
\end{equation}
\end{compactitem}
In order to estimate the maximum deviation $d_\infty$ defined in \eqref{dinfty} we use local linear regression. To be precise, 
let $K$ denote a kernel function (see Assumption~\ref{assump:kernel} below for details) and define $K_h( \cdot ) =K(\tfrac{\cdot}{h})$, for $h>0$. 
The \textit{local linear estimator} $\hat{\mu}_{h_n}$ with positive bandwidth $h_n =o(1)$ as $n\to\infty$ is  defined by the first coordinate of the minimizer 
\begin{equation} \label{eq:defLocLinEst}
 \big(\hat{\mu}_{h_n}(t),\widehat{\mu'}_{h_n}(t)\big) = \underset{b_0,b_1 \in \R }{\argmin} \sum_{i=1}^{n} \big\{ X_i-b_0-b_1(\tfrac{i}{n}-t)\big\}^2 K_{h_n}(\tfrac{i}{n}-t),
  \end{equation}
see, for example, \cite{FanGij1996}.  Throughout, we will use a bias corrected version of  the local linear estimator   adopting the Jackknife bias reduction technique proposed by \cite{schucany1977} which is defined by
\begin{equation}\label{ejackest} 
\tilde{\mu}_{h_n}(t)=2 \hat{\mu}_{h_n/\sqrt{2}}(t)-\hat{\mu}_{h_n}(t).
  \end{equation}
 A simple estimator for the benchmark $g( \mu) $ is now obtained by $\hat g_{n} =  g  (\tilde {\mu}_{h_n}) $, but we note that other estimators can be used as well for this purpose.
For example, if $g(\mu) = \int_{{0}}^{1}\mu (t) dt $,  the sample mean  $\bar{X}_n=\tfrac{1}{n}\sum_{i=1}^{n}X_{i,n}$ could be used as an alternative  (and simpler) estimator for $g(\mu)$.

If $\hat{g}_n$ is an appropriate estimator for the benchmark $g(\mu)$, a natural estimator for the maximum deviation  $d_\infty$ is given by
\begin{equation} 
\label{eq:estimator} 
 \hat{d}_{\infty,n} = \sup_{t\in I_n}| \tilde{\mu}_{h_n}(t)-\hat g_{n}|,
\end{equation} 
where we restrict the supremum to the interval $I_n=[x_0 \vee h_n , x_1 \wedge (1-h_n)]$ to account for boundary effects of $\tilde \mu_{h_n}$.
The null hypothesis in 
\eqref{h0all} is rejected for large values of the statistic $\hat d_{\infty, n} $. In order to define suitable critical values 
for this test  we  will investigate the weak convergence of an affinely standardized version of the statistic  $\hat d_{\infty, n}$ within the next section. 

\begin{remark} \label{rem:test0}
Note that the related problem of investigating the asymptotic properties of the sup-norm of the difference between a nonparametric estimator $\hat \mu$ and the true regression  function $\mu$  (or $\Exp[\hat \mu]$), i.e., $\| \hat \mu - \mu\|_\infty$ (or $\| \hat \mu - \Exp[\hat \mu ] \|_\infty$), have been investigated by several authors, mostly  in the case of independent observations \citep[see][and the references therein]{Johnston1982, Xia1998, Proksch2016}, 
but also for stationary data \citep[see][among others]{WuZhao2007,zhao2008}. The most prominent statistical application of results of this type concerns the construction of simultaneous asymptotic  confidence bands
for the mean function. Using the duality between confidence regions  and hypotheses testing,  these confidence bands yield a simple test for the hypotheses in \eqref{h0all}.
  In this remark we briefly discuss  this approach and explain why
it does not yield powerful tests in general.

Our starting point is a result of the above type in the case of a (possibly locally) stationary error process, which is in fact a by-product of this work derived in Section~\ref{sec8} below. 
In that section, we derive  the asymptotic distribution of  
\[
\sup_{t\in I_n} | \tilde{\mu}_{h_n}(t)-\hat g_{n} -\{ \mu(t) - g(\mu) \} |
\] 
(note that our assumptions allow for $\hat g_n=g=0$),  which may then be used to construct constants $c=c_{n,\alpha}$ not depending on unknown objects such that
\begin{equation}  \label{cb} 
	I_n(t) = [ \tilde{\mu}_{h_n}(t) -\hat g_{n}  -c_{n,\alpha}, \tilde{\mu}_{h_n}(t) -\hat g_{n}  +c_{n,\alpha}], \qquad {t\in I_n},
\end{equation} 
defines   an asymptotic simultaneous  $(1-\alpha)$ confidence band for the function $\mu- g(\mu)$,
that is, 
\begin{align*}
	&\phantom{{}={}}\lim_{n\to\infty} \Prob\big( \mu(t) - g(\mu) \in I_n(t)  \ \forall\, t \in I_n\big) \\
	&=
	\lim_{n\to\infty} \Prob( \sup_{t\in I_n }| \tilde{\mu}_{h_n}(t) -\hat g_{n} -\{ \mu(t) - g(\mu) \} |  \le c_{n,\alpha})  
	= 1-\alpha
	\end{align*}
	(see Remark~\ref{confband} for more  details, and \eqref{test1h0}  for a  precise  definition of $c_{n,\alpha}$).

Now using a well-known general relation between confidence intervals 
and  statistical tests (see, e.g., \citealp{aitchison1964}),  we may use \eqref{cb} to construct a simple asymptotic level $\alpha$-test. Indeed, some thoughts reveal that accepting the null hypotheses in  \eqref{h0all} whenever both the constant function $\Delta$ and the constant function $-\Delta$ is completely contained in the confidence band in \eqref{cb} is equivalent to rejecting  whenever
\begin{equation} \label{test0}
\hat d_{\infty, n}  > \Delta  + c_{n,\alpha}.
\end{equation}
Next, under $H_{0}$ in \eqref{h0all}, we have $d_{\infty,n} = \sup_{t \in I_n} | \mu(t) - g(\mu)| \le d_\infty \le \Delta $ and  
 \eqref{test0}  implies 
\[
\sup_{t\in I_n }| \tilde{\mu}_{h_n}(t) -\hat g_{n} -\{ \mu(t) - g(\mu) \} |
 \ge 
 \hat d_{\infty, n} - d_{\infty, n} >  c_{n,\alpha} + \Delta - \Delta =  c_{n,\alpha},
\]
which  gives 
\begin{multline}\label{conservative}
\limsup_{n \to \infty} \mathbb{P}
\big ( H_0 \mbox{ is rejected} 
 \big )  \\
 \le  
\limsup_{n \to \infty}  \mathbb{P}
\Big(  \sup_{t\in I_n }| \tilde{\mu}_{h_n}(t) -\hat g_{n} -\{ \mu(t) - g(\mu) \} |  > c_{n,\alpha} \Big)  =  \alpha.
\end{multline}
However, due to  the first inequality in \eqref{conservative},  the test in \eqref{test0} is conservative and has very low power for testing the hypotheses in
\eqref{h0all}, except in the case of classical hypotheses, that is $\Delta=0$.
In fact, in the latter case, we have equality in \eqref{conservative}  due to the fact   the null hypothesis is rather simple, only containing the constant regression functions. If $\Delta>0$ however, even the boundary of the null hypothesis (i.e., those regression functions $\mu$ with $d_\infty=\Delta$) is a far more complicated set, rendering the statistical problem substantially more difficult. 
\end{remark}

In light of the previous remark, it is the main goal of this work to derive alternative critical values for a test that is based on rejecting the null hypothesis for large values of $\hat d_{\infty, n}$. While those values will result in a substantially better approximation of the nominal level and better power properties, their derivation will also be more complicated compared to the above construction. More precisely, in Section \ref{sec3}  and \ref{sec4}, we will derive sequences $(a_n)_{n\in\N}$ and $(b_n)_{n\in\N}$ such that $a_n(\hat d_{\infty, n}-d_\infty)-b_n$ converges in distribution with a non-degenerate limit,
and it will turn out that both the standardizing sequences  and the limit distribution depend sensitively  on the function $d =d(t)= \mu(t) - g(\mu)$, even on the boundary of the null hypothesis.

In the remaining parts of this section, we collect regularity assumptions that are sufficient to derive the intended limit results.
We begin with a standard assumption regarding the kernel $K$ used in the local linear estimator.

\begin{assumption}\label{assump:kernel}
The kernel $K$ is symmetric, supported on the interval  $[-1,1]$, twice differentiable and satisfies 
$\int_{[-1,1]}K(x)\diff x=1$.
\end{assumption}

Next we define the dependence structure in model \eqref{Lipschiss}. For this purpose we
recall some basic definitions  on physical dependence measures of stationary processes  \citep{Wu2005}.
For $q\ge 1$, let  $\| X \|_{q,\Omega}  = \big ( \E |X|^q  \big )^{\scs 1/q}$ denote the $\mathcal L_q$-norm of a random variable $X$ defined on a probability space $(\Omega, \Ac, \Prob)$.
Let $\eta=(\eta_i)_{i\in\Z}$ be a sequence of independent identically distributed random variables and let $(\eta')=(\eta_i')_{i\in\Z}$ be an independent copy of $\eta$. Further, define $\Fc_i=( \ldots,\eta_{-2},\eta_{-1},\eta_0,\eta_1,\ldots,\eta_i)$ and $\Fc_i^*=(\ldots,\eta_{-2},\eta_{-1},\eta_0',\eta_1,\ldots,\eta_i)$. Let $G:\R^\N \to \R$ denote a possibly nonlinear filter such that $\eps_i=G(\Fc_i)$ and $\eps_i^*=G(\Fc_i^*)$ are properly defined random variables. 

The physical dependence measure of a  filter $G$ with $\|G(\Fc_0)\|_{q,\Omega}<\infty$ with respect to the norm $\|\cdot\|_{q,\Omega}$ is defined by
\begin{align} \label{eq:pdm}
	\delta_q(G,i)=\|G(\Fc_i)-G(\Fc_i^*)\|_{q,\Omega}, \qquad i \in \N.
	\end{align}
The quantity $\delta_q(G,i)$ can be regarded as a measure for the serial dependence at lag $i$ of $(\eps_{j})_{j\in\N}$.
It  plays a similar role as a mixing coefficient, yet it is easier to bound in many cases.

\begin{assumption} \label{assump:error}
	The error process $(\eps_{i})_{i\in\Z}$ in model \eqref{Lipschiss} is centered and has a representation $\eps_i=G(\Fc_i)$ with a filter $G$ such that the following conditions are satisfied:
	\begin{compactenum}[(i)]
	\item There exists $\chi\in(0,1)$ such that $\delta_4(G,i)=\Oc(\chi^i)$, as $i\to\infty$.
	\item The \textit{long-run variance} of $(\eps_{j})_{j\in\N}$, defined as 
	\begin{equation}\label{eq:defLRV} \sigma^2=\sum_{i=-\infty}^{\infty}\cov\big(G(\Fc_i),G(\Fc_0)\big) ,\end{equation}
	exists and is positive.
	\end{compactenum}
\end{assumption} 

We also need to impose a certain degree of  regularity on the function $d(t) = \mu (t) - g(\mu)$.  For that purpose, let  $\|f\|_\infty=\sup_{t\in[x_0,x_1]}|f(t)|$ denote   
the sup-norm of a function $f$ on the interval $ [x_0,x_1]$. Note that  $d_{\infty} =  \|d\|_\infty$ and that we again do not reflect the dependence on $x_{0}$ and $x_{1}$ in our notation. 
The points where  $|d| $ attains its sup-norm are  called 
extremal points and  the corresponding set of extremal points is  denoted by $\Ec$. Note that we have
\begin{equation}  \label{extrem}
\Ec=\Ec^+\cup \Ec^-,
\end{equation} 
 where
\[ 
\Ec^{+}= \{t\in[x_0,x_1]:  d(t)= \|d\|_\infty \},\quad \Ec^{-}= \{t\in[x_0,x_1]:  d(t)=  - \|d\|_\infty \}.
\]
These sets depend on the function $d$ and on $x_0, x_1$, which is not reflected in our notation as it will always be clear from the context.
If the function $d$ is continuous, the sets $\Ec,\Ec^+$ and $\Ec^-$ are compact.  Moreover, unless $\|d\|_\infty = 0$, $\Ec^+$ and $\Ec^-$ are disjoint.

\begin{assumption}\label{assump:mu}~
\begin{compactenum}[(i)] \item
The function $\mu$ is twice differentiable with Lipschitz continuous second derivative.

\item There exists a constant $\gamma>0$ such that $d = \mu - g(\mu) $ is concave (convex) on $U_\gamma(t) := \{s\in[x_0, x_1] \colon |s-t|< \gamma \} $, for any $t\in\Ec^+$ ($t\in\Ec^-$).
\end{compactenum}
\end{assumption}

\begin{remark}\label{rem:concavity}
Assumption \ref{assump:mu}(ii) is made to avoid functions with an irregular behaviour at the extremal points. In this remark we give some more explanation for it.
\begin{compactenum}[(i)] 
		\item[(i)] If the function $d$ is  continuously differentiable, Assumption \ref{assump:mu}(ii) is satisfied provided its derivative $d'$ is decreasing on $U_\gamma(t)$, for $t\in\Ec^+$, and 
		increasing on $U_\gamma(t)$, for $t\in\Ec^-$.	In particular, if $d$ is twice differentiable with Lipschitz continuous second derivative,
		and if $\phi = \inf_{t\in\Ec} |d''(t)|>0$, the constant  $\gamma$ can be chosen as $\phi /(2L_2)$, where  $L_{2}	$  is the Lipschitz constant of $d^{\prime\prime}$.  
		In this case  it follows that $\lambda(\Ec)=0$, where $\lambda$ denotes the Lebesgue measure. 
		
			\item[(ii)] 
		For any interval $[t_1,t_2]\subset\Ec^+$ with $t_1,t_2\in\partial \Ec^+$, by concavity, $d$ is strictly increasing on 
		the interval 
		$(t_1-\gamma, t_1]$ and
		strictly decreasing on $[t_2,t_2+\gamma)$. Consequently, for any $s\in (t_1-\gamma, t_1)\cup (t_2,t_2+\gamma)$, it follows that $s\notin\Ec^+$. Analogously $s\in (t_1-\gamma,t_1)\cup (t_2,t_2+\gamma)$ implies that $s\notin \Ec^-$, for any interval $[t_1,t_2]\subset \Ec^-$ with $t_1,t_2\in\partial \Ec^-$. Hence, it follows from Assumption \ref{assump:mu}(ii), that $\Ec$ is 
		a  finite union of at most $\lfloor (2\gamma)^{-1}\rfloor$ intervals and single points.
	\end{compactenum}
\end{remark}

\begin{assumption}\label{assump:ghat}
	The estimator $\hat{g}_n$ of the functional $g(\mu)$ satisfies 
	\[
		|\hat{g}_n-g(\mu)|=o_\pr\Big(\tfrac{1}{\sqrt{nh_n | \log(h_n )| }}\Big), \qquad n \to\infty,
		\]
		where $h_n$ denotes the bandwidth parameter used for the estimator in \eqref{eq:defLocLinEst}.
\end{assumption}

\begin{remark}
	Assumption \ref{assump:ghat} is not very strong and satisfied for many common estimators for $g(\mu)$. Exemplary, we consider the situations in \eqref{h0initial}, \eqref{h0avl} and \eqref{h0full}.
	\begin{compactenum}[(i)] 
		\item[(i)] If Assumptions \ref{assump:kernel}, \ref{assump:error} and \ref{assump:mu}(i) are met, it follows from Lemma~C.2 in the supplementary material of \cite{dettewu2019} that $$|\tilde{\mu}_{\tilde h_n}(0)-\mu(0)|=\Oc_\pr\Big(\tfrac{1}{\sqrt{n\tilde h_n}}\Big)+\Oc\Big(\tilde h_n^3+\tfrac{1}{n\tilde h_n}\Big),$$
		for any bandwidth $\tilde h_n > 0$ with $\tilde h_n\to 0$ and $n\tilde h_n\to \infty$. Thus, Assumption \ref{assump:ghat} holds for $g(\mu)=\mu(0)$ with $\hat{g}_n = \tilde{\mu}_{h_n |\log(h_n)|^2}(0)$, provided that $nh_n^7 | \log h_n|^{12} = o(1)$.
			\item[(ii)] Similarly, if $g(\mu)$ is an integral as considered in \eqref{h0avl} and \eqref{h0full}, define $\hat{g}_n=\bar{X}_n(x_0)=\tfrac{1}{\lfloor x_0 n\rfloor}\sum_{i=1}^{\lfloor x_0 n\rfloor} X_{i,n}$. Then,	
		\begin{align*}
		&\phantom{{}={}} \bigg|\bar{X}_n(x_0)-\frac1{x_0}\int_0^{x_0}\mu(t)\diff t\bigg| \\
		&= \bigg|\frac{1}{\lfloor x_0 n\rfloor}\sum_{i=1}^{\lfloor x_0 n\rfloor}\eps_i + \frac{1}{\lfloor x_0 n\rfloor}\sum_{i=1}^{\lfloor x_0 n\rfloor}\mu\big(\tfrac{i}{n}\big)- \frac1{x_0} \int_0^{x_0} \mu(t)\diff t\bigg| 
		= \Oc_\pr(n^{-1/2})
		\end{align*}
		and Assumption \ref{assump:ghat} holds for $g(\mu)=\frac1{x_0}\int_0^{x_0}\mu(t)\diff t$, provided $h_n=o(1)$ as $n\to\infty$.	\end{compactenum}
\end{remark}

	\section{Weak convergence of $\widehat d_{\infty, n}$ and a first simple test}  \label{sec3}

	In this section we will
	derive  the limit distribution of the statistic $\hat d_{\infty, n}$ (after appropriate 
	standardization), see Theorem~\ref{thm:Dn}. The result may be used to construct a simple test for the hypotheses  in \eqref{h0all}, for any fixed $\Delta\ge0$.	However, depending on the data-generating process, the test may be quite conservative whence we continue in Section \ref{sec4} by proposing  a less conservative test based on estimating the set of extremal points $\mathcal{E} $ defined  in \eqref{extrem}.

The convergence rate of the statistic $\hat d_{\infty,n}$ as defined in \eqref{eq:estimator} depends crucially on the scaling sequence $\ell_n$ defined as
			\begin{equation} \label{eq:ln} 
		\ell_n=\sqrt{2\log(\tfrac{\Lambda_{K} \lambda(I_n) }{2\pi h_n})} \sim \sqrt{ 2  \cdot |\log (h_n)|},
		\end{equation}	
		where $I_n=[x_0 \vee h_n, x_1 \wedge (1-h_n)]$ and the constant  $\Lambda_{K}$ is defined as
		\begin{equation} \label{lamK}
		\Lambda_{K} = {  \|(K^*)^{\prime }\|_2  \over \|K^*\|_2} 
		\end{equation}	
		($\|f\|_2$ denotes the $L^2$-norm of some function $f$), with the function $K^{*}$ given by  
		\begin{equation}
		\label{eq:defassump:kernel}
		K^*(x)= 2\sqrt{2}K(\sqrt{2}x)-K(x).
		\end{equation}	
		Next, recall the Gumbel distribution $\Gum_a$ with location parameter $a \in \R$, defined through its c.d.f.
		\[
		\Gum_a((-\infty, x]) = \exp[-\exp\{-(x-a)\}], \quad x \in \R.
		\] 
Finally, recall  that $\hat d_{\infty,n} = \sup_{t \in I_n} |\tilde{\mu}_{h_n}(t)-\hat{g}_n|$ and let $d_{\infty,n} = \sup_{t \in I_n} |\mu(t)-g(\mu)|$.

	\begin{theorem}\label{thm:Dn}
Suppose that  Assumptions \ref{assump:kernel}, \ref{assump:error}, \ref{assump:mu}(i) and  \ref{assump:ghat}  hold. Further, assume that the bandwidth satisfies
 \[
 h_n\to 0, \quad nh_n\to\infty, \quad nh_n^{7} |\log(h_n)|\to 0, \quad 
 \limsup_{{n\to \infty} }  \tfrac{|\log(h_n)| \log^4 n}{n^{1/2}h_n}<\infty.
 \]
 \begin{compactenum}[(1)]
\item[(1)] If $\lambda(\Ec)>0$ and $d_\infty>0$, then 
\[ 
	\frac{\ell_n \sqrt{nh_n}}{\sigma\|K^*\|_2} ( \hat d_{\infty,n}  -  d_{\infty,n}) -\ell_n^2 
	\convw  
	\Gum_{\log\{ \lambda(\Ec)/(x_1-x_0) \} }.
	 \]
\item[(2)]	 If $d_\infty=0$ (which implies $\lambda(\Ec)=x_1-x_0)$, then 
	\[ 
	\frac{\ell_n \sqrt{nh_n}}{\sigma\|K^*\|_2} ( \hat d_{\infty,n}  -  d_{\infty,n}) -\ell_n^2 
\convw  \Gum_{\log(2)}.
	 \]
	\item[(3)] If $\lambda(\Ec)=0$ and if additionally Assumption~\ref{assump:mu}(ii) is met, then 
		\[ 
	\frac{\ell_n \sqrt{nh_n}}{\sigma\|K^*\|_2} ( \hat d_{\infty,n}  -  d_{\infty,n}) -\ell_n^2 
 \convw  - \infty.
	 \]
\end{compactenum}
	\end{theorem}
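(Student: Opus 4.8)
The plan is to reduce everything to a single Gaussian-approximation statement for the standardised supremum of the estimation error and then handle the three cases by localising the supremum near the set of extremal points $\Ec$. First I would introduce the centred, bias-corrected process $Z_n(t) = \tfrac{\sqrt{nh_n}}{\sigma\|K^*\|_2}\big(\tilde\mu_{h_n}(t)-\hat g_n -\{\mu(t)-g(\mu)\}\big)$ on $I_n$. Using Assumption~\ref{assump:ghat} (which makes the $\hat g_n$ contribution negligible at rate $o_\pr(1/\ell_n)$, hence irrelevant after multiplication by $\ell_n$) and the bandwidth conditions $nh_n^7|\log h_n|\to 0$ (controlling the bias of the Jackknife estimator $\tilde\mu_{h_n}$) and $\limsup |\log h_n|\log^4 n/(n^{1/2}h_n)<\infty$ (controlling the Gaussian approximation error uniformly), I would invoke the strong-approximation machinery for physically dependent errors (the result announced for Section~\ref{sec8}) to couple $Z_n$ with a stationary Gaussian process whose correlation structure is, after rescaling by $h_n$, asymptotically that of the kernel $K^*$ from \eqref{eq:defassump:kernel}. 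The payoff is the classical extreme-value statement (Bickel--Rosenblatt type, as in \cite{Johnston1982, WuZhao2007}): for any fixed closed $J\subseteq[x_0,x_1]$ with $\lambda(J)>0$,
\[
\ell_n \sup_{t\in J\cap I_n} |Z_n(t)| - \ell_n^2 \convw \Gum_{\log\{\lambda(J)/(x_1-x_0)\}},
\]
and the one-sided version $\ell_n\sup_{t\in J\cap I_n} \pm Z_n(t) - \ell_n^2 \convw \Gum_{\log\{\lambda(J)/(2(x_1-x_0))\}}$, with the constant $\Lambda_K$ of \eqref{lamK} entering precisely through the normalisation $\ell_n$ in \eqref{eq:ln}.

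Next I would convert this into a statement about $\hat d_{\infty,n}-d_{\infty,n}$. Write $d_n(t)=\tilde\mu_{h_n}(t)-\hat g_n$ and $d(t)=\mu(t)-g(\mu)$, so $\hat d_{\infty,n}=\sup_{I_n}|d_n|$ and $d_{\infty,n}=\sup_{I_n}|d|$. The key observation is that, because $Z_n$ fluctuates at the small scale $1/\sqrt{nh_n}$ while $d$ is $C^2$ and bounded away from its maximum outside any neighbourhood of $\Ec$, the supremum of $|d_n|=|d+ \tfrac{\sigma\|K^*\|_2}{\sqrt{nh_n}}Z_n|$ is asymptotically attained only on a shrinking neighbourhood $\Ec_\eta=\{t:\dist(t,\Ec)<\eta\}$; on $\Ec^+$ we have $d_n \approx \|d\|_\infty + \tfrac{\sigma\|K^*\|_2}{\sqrt{nh_n}}Z_n$ and on $\Ec^-$ we have $-d_n \approx \|d\|_\infty + \tfrac{\sigma\|K^*\|_2}{\sqrt{nh_n}}(-Z_n)$, so
\[
\tfrac{\sqrt{nh_n}}{\sigma\|K^*\|_2}(\hat d_{\infty,n}-d_{\infty,n}) = \max\Big\{\sup_{\Ec^+_\eta\cap I_n} Z_n,\ \sup_{\Ec^-_\eta\cap I_n}(-Z_n)\Big\} + o_\pr(1/\ell_n).
\]
This reduction is the crux of cases (1) and (2): when $\lambda(\Ec)>0$ and $d_\infty>0$, $\Ec^+$ and $\Ec^-$ are disjoint, so (using that $Z_n$ restricted to regions separated by a fixed gap is asymptotically independent — again from the Gaussian coupling and exponential decay of the $K^*$-correlation) the two suprema are asymptotically independent Gumbels with location parameters $\log\{\lambda(\Ec^\pm)/(2(x_1-x_0))\}$, and the max of two such independent Gumbels is $\Gum_{\log\{(\lambda(\Ec^+)+\lambda(\Ec^-))/(2(x_1-x_0))\}}=\Gum_{\log\{\lambda(\Ec)/(2(x_1-x_0))\}}$. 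Wait — comparing with the stated location $\log\{\lambda(\Ec)/(x_1-x_0)\}$, the correct bookkeeping is that near $\Ec^+$ the relevant event is one-sided in $Z_n$ while the full two-sided supremum over a set of measure $\lambda$ gives location $\log\{\lambda/(x_1-x_0)\}$; the matching is obtained by noting $\max$ of the one-sided contributions from $\Ec^+$ and $\Ec^-$ reproduces exactly the two-sided normalisation, which I would verify by the elementary identity for maxima of independent Gumbels. For case (2), $d\equiv 0$ forces $\Ec=[x_0,x_1]$, $\Ec^+=\Ec^-=[x_0,x_1]$, and the relevant quantity is $\sup_{I_n}|Z_n|$ itself, giving $\Gum_{\log 2}$ directly since $\lambda(I_n)/(x_1-x_0)\to 1$.

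The main obstacle — and the part requiring genuine care rather than bookkeeping — is case (3) and, relatedly, the rigorous justification of the localisation step when $\lambda(\Ec)=0$. Here $\Ec$ is (by Remark~\ref{rem:concavity}(ii) under Assumption~\ref{assump:mu}(ii)) a finite set of points and intervals of total measure zero, so the ``effective window'' over which $|d_n|$ can realise its maximum shrinks faster than any fixed $\eta$; the supremum over a set of vanishing Lebesgue measure $\lambda_n\downarrow 0$ behaves like $\ell_n\sup - \ell_n^2 \to \Gum_{\log(\lambda_n/(x_1-x_0))} = \Gum_{-\infty}$, i.e.\ the limit drifts to $-\infty$. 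Making this precise requires a quantitative version of the concavity assumption: near each extremal point $t_0\in\Ec^+$, concavity on $U_\gamma(t_0)$ plus the $C^2$-Lipschitz property gives $d(t)\le \|d\|_\infty - c|t-t_0|^2$ (or a weaker one-sided quadratic bound at interval endpoints), so that a deviation of $|d_n|$ above $\|d\|_\infty$ at distance $r$ from $\Ec$ costs $d$ a deficit of order $r^2$, which $Z_n$ can overcome only on a window of width $O((nh_n)^{-1/2})$ in the worst case. I would then bound the supremum over $I_n$ by a union bound over $O(1/h_n)$ windows of width $h_n$, separating those within $O(h_n^{1/2})$ of $\Ec$ from the rest, and show the whole thing, after the $(\ell_n\cdot)-\ell_n^2$ standardisation, tends to $-\infty$ in probability; the subtlety is that one must rule out the possibility that the Gaussian field has an exceptionally large excursion on the tiny good window, which is exactly what the $\Gum_{-\infty}$ limit encodes and what the precise constants in $\ell_n$ (with $\Lambda_K$) are calibrated to deliver. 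Throughout, I would keep the error from replacing $d_n$ by its Gaussian surrogate and from the bias at order $o_\pr(\ell_n^{-1})$, which is why the stringent bandwidth conditions are imposed.
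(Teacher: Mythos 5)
Your overall architecture coincides with the paper's: localise the supremum of $|\hat d_n|$ to the set of extremal points, exploit that on $\Ec^+$ the relevant quantity is the one-sided supremum of the standardised error process $Z_n$ (and on $\Ec^-$ of $-Z_n$), use asymptotic independence of the two contributions together with $-Z_n\stackrel{\Dc}{=}Z_n$, and combine via the max-of-independent-Gumbels identity. However, there is a concrete error in your extreme-value input that propagates through the whole argument and that your ``Wait'' passage flags but does not repair. With the normalisation $\ell_n=\ell_n([x_0,x_1])$, the correct statements are
\[
\ell_n\sup_{t\in J\cap I_n}Z_n(t)-\ell_n^2\convw \Gum_{\log\{\lambda(J)/(x_1-x_0)\}},
\qquad
\ell_n\sup_{t\in J\cap I_n}|Z_n(t)|-\ell_n^2\convw \Gum_{\log\{2\lambda(J)/(x_1-x_0)\}} ,
\]
i.e.\ the \emph{two-sided} supremum carries the extra $\log 2$, not the one-sided one. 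Both of your displayed locations are shifted down by $\log 2$: your two-sided statement applied to $J=[x_0,x_1]$ would give $\Gum_0$, contradicting your own conclusion $\Gum_{\log 2}$ in case (2); and your one-sided locations $\log\{\lambda(\Ec^\pm)/(2(x_1-x_0))\}$ yield, after the max-of-Gumbels identity $\max(\Gum_a,\Gum_b)\sim\Gum_{\log(e^a+e^b)}$, the location $\log\{\lambda(\Ec)/(2(x_1-x_0))\}$ rather than the asserted $\log\{\lambda(\Ec)/(x_1-x_0)\}$. The ``elementary identity'' cannot rescue this; the fix is to correct the one-sided location, after which $\log\{(\lambda(\Ec^+)+\lambda(\Ec^-))/(x_1-x_0)\}=\log\{\lambda(\Ec)/(x_1-x_0)\}$ comes out exactly as in the theorem (this is precisely the computation $\ell_n(\Ec)\ell_n-\ell_n^2\to\log\{\lambda(\Ec)/(x_1-x_0)\}$ in the paper's proof).

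The second gap is the localisation itself. Discarding $I_n\setminus\Ec_\eta$ is indeed routine (there $|d|$ is bounded away from $d_\infty$ by a fixed or slowly vanishing amount, which beats the $O(\ell_n/\sqrt{nh_n})$ fluctuations). What is \emph{not} routine is passing from the supremum over a shrinking neighbourhood $U_{\delta_n}(\Ec^+)$ to the supremum over $\Ec^+$ itself: on the annulus $U_{\delta_n}(\Ec^+)\setminus\Ec^+$ the deficit $d_\infty-d(t)$ is only $o(1)$, so the Gaussian field could in principle realise its maximum there and shift the limit (note that for \emph{fixed} $\eta$ the location parameter would be $\log\{\lambda(\Ec_\eta^+)/(x_1-x_0)\}\neq\log\{\lambda(\Ec^+)/(x_1-x_0)\}$). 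The paper devotes Proposition~\ref{thm:Rn} to this: it compares the supremum over the annulus with the supremum over $\Ec^+$ by constructing independent blocks of width $O(\delta_n+h_n)$ and showing $\Prob(\sup_{\text{annulus}}Z_{n,3}>\sup_{\Ec^+}Z_{n,3})=\Oc(\delta_n+h_n)$ via the uniformity of $F_{Y_n}(Y_n)$. Your proposal asserts the reduction with an $o_\pr(1/\ell_n)$ error without an argument, and this is where a real idea is needed. Your sketch for case (3) (quadratic deficit from concavity plus a union bound over $O(1/h_n)$ windows) is a plausible alternative to the paper's route via Theorem~\ref{conv:DnEn}, but it too is only outlined.
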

	
Note that $-\infty \ll_S \Gum_{\log\{ \lambda(\Ec)/(x_1-x_0) \}} \ll_S \Gum_0 \ll_S \Gum_{\log(2)}$, where $\ll_S$ denotes the (first order) stochastic dominance ordering.  Subsequently, this will be used to construct suitable critical values for a first simple test of \eqref{h0all}. Further note that the results of the theorem continue to hold if $\ell_n$ is replaced by
		\begin{equation} \label{eq:ln2} 
		\ell_n'=\sqrt{2\log(\tfrac{\Lambda_{K} (x_1-x_0) }{2\pi h_n})}.
		\end{equation}

In general, applications of Theorem~\ref{thm:Dn} to the construction of inferential methodology  require an estimator for the long-run variance $\sigma^2$. For this purpose, we may for instance follow \cite{WuZhao2007}: define the partial sums $S_{j,k}=\sum_{i=j}^{k} X_{i,n}$ and let
\begin{equation} \label{lrvest}
\hat{\sigma}^2=\frac{1}{\lfloor n/m_n\rfloor -1}\sum_{j=1}^{\lfloor n/m_n\rfloor-1}\frac{(S_{(j-1)m_n+1,jm_n}-S_{jm_n+1,(j+1)m_n})^2}{2m_n},
\end{equation} 
where $m_n$ denotes some integer sequence proportional to $n^{1/3}$. If Assumptions \ref{assump:kernel}, \ref{assump:error}  and \ref{assump:mu}(i) are satisfied, it follows from Theorem 3 
in the last-named reference
that 
\begin{equation}\label{eq:estLRV}\hat{\sigma}^2=\sigma^2+\Oc_\pr(n^{-1/3}).\end{equation}

Together with the latter result, Theorem~\ref{thm:Dn} allows for the construction of a first simple test for the hypotheses  in \eqref{h0all}. The cases $\Delta=0$ and $\Delta>0$ need to be treated separately. First, in the case $\Delta=0$, the only possible limiting distribution under the null is the Gumbel distribution $\Gum_{\log(2)}$, whence we propose to reject the null hypothesis
\begin{equation}
\label{classh0}
H_{0}: d_{\infty} =0 \quad \mbox{  vs. }  \quad H_{1}:  d_{\infty} >0 
 \end{equation}
whenever
\begin{equation}
\label{test1h0}
\hat d_{\infty, n} > (q_{\log(2),1-\alpha}+\ell_n^2)\frac{\hat{\sigma}\|K^*\|_2}{\sqrt{nh_n}\ell_n} =: c_{n,\alpha}, 
 \end{equation}
 where 
  $q_{a, \beta}$ denotes the $\beta$-quantile of the Gumbel distribution $\Gum_a$ with location parameter $a$. In the case $\Delta>0$, all possible limiting distributions on the boundary of the null hypothesis (i.e., $d_\infty=\Delta$) are stochastically dominated by the Gumbel distribution $\Gum_0$ (which is in fact attained for models with $\lambda(\Ec)=x_1-x_0$). As a consequence, we propose to reject the null hypothesis 
\begin{equation}
\label{relh0}
H_{0}: d_{\infty} \leq  \Delta  \quad \mbox{  vs. }  \quad H_{1}:  d_{\infty} >\Delta,
 \end{equation}
whenever
\begin{equation}
\label{test1rel}
\hat d_{\infty, n} > (q_{0,1-\alpha}+\ell_n^2)\frac{\hat{\sigma}\|K^*\|_2}{\sqrt{nh_n}\ell_n}+\Delta.
 \end{equation}
Note that this decision rule   has a similar structure as the   test  \eqref{test0} derived from the simultaneous confidence band for $d$, whose critical value are based on using $q_{\log 2,1-\alpha  }$
instead of $q_{0,1-\alpha  }$; see equation \eqref{cnalpha} for the definition of $c_{n,\alpha}$  in   \eqref{test0}.
Consequently, as  $q_{0,1-\alpha }   < q_{\log 2,1-\alpha }  $ the test \eqref{test1rel} is more powerful than  the test  \eqref{test0}.

	\begin{corollary}  \label{FloZirkus}
	Let Assumptions \ref{assump:kernel}, \ref{assump:error}, \ref{assump:mu} and  \ref{assump:ghat}  be met.
	\begin{compactenum}[(1)]
	\item The decision rule \eqref{test1h0} defines a consistent and asymptotic level $\alpha$-test for the 
	the hypotheses in \eqref{classh0}.
	\item[(2)] The decision rule \eqref{test1rel} defines a consistent and asymptotic level $\alpha$-test for the 
	the hypotheses in \eqref{relh0}, where $\Delta >0$.
	\end{compactenum}	
	\end{corollary}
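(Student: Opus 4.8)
The plan is to derive both statements from Theorem~\ref{thm:Dn} together with the consistency \eqref{eq:estLRV} of the long-run variance estimator, distinguishing the behaviour under the null, on the boundary of the null, and under the alternative. I will first record the elementary consequence of \eqref{eq:estLRV} that $\hat\sigma/\sigma \to 1$ in probability, and note that $\ell_n \to \infty$ while $\ell_n / \sqrt{nh_n} \to 0$ and $\ell_n^2 / \sqrt{nh_n} \to 0$ under the stated bandwidth conditions, so that Slutsky's theorem applies whenever one of the three convergences in Theorem~\ref{thm:Dn} holds with $\sigma$ replaced by $\hat\sigma$.

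\emph{Part (1), $\Delta=0$.} Under $H_0$ in \eqref{classh0} we have $d_\infty=0$, hence $d_{\infty,n}=0$ and $\lambda(\Ec)=x_1-x_0$, so Theorem~\ref{thm:Dn}(2) applies and, after replacing $\sigma$ by $\hat\sigma$ via Slutsky, $\frac{\ell_n\sqrt{nh_n}}{\hat\sigma\|K^*\|_2}\hat d_{\infty,n} - \ell_n^2 \convw \Gum_{\log 2}$. Rewriting the rejection event \eqref{test1h0} as $\frac{\ell_n\sqrt{nh_n}}{\hat\sigma\|K^*\|_2}\hat d_{\infty,n} - \ell_n^2 > q_{\log 2, 1-\alpha}$ and using the continuity of the Gumbel c.d.f.\ gives $\Prob(H_0 \text{ rejected}) \to \alpha$, which is exact asymptotic level $\alpha$ since the null is simple. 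For consistency, fix $\mu$ with $d_\infty>0$; then $d_{\infty,n}\to d_\infty>0$ (continuity of $\mu$ and $I_n\uparrow[x_0,x_1]$), and by Theorem~\ref{thm:Dn}(1) or (2) $\hat d_{\infty,n} = d_{\infty,n} + O_\pr\big(\frac{\ell_n^2 \sigma\|K^*\|_2}{\ell_n\sqrt{nh_n}}\big) = d_{\infty,n}+o_\pr(1) \to d_\infty$; since the critical value $c_{n,\alpha} = O_\pr\big(\frac{\ell_n^2 \hat\sigma\|K^*\|_2}{\ell_n\sqrt{nh_n}}\big) \to 0$, the rejection probability tends to $1$.

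\emph{Part (2), $\Delta>0$.} The key monotonicity observation is that the map $x\mapsto q_{a,1-\alpha} + $ (affine standardization with fixed $a$) is increasing, so it suffices to compare with the stochastically largest possible limit on the boundary. On the boundary $d_\infty=\Delta$ we have $d_{\infty,n}\le d_\infty=\Delta$, hence $\hat d_{\infty,n}-\Delta \le \hat d_{\infty,n}-d_{\infty,n}$, and therefore the probability of \eqref{test1rel} is bounded above by $\Prob\big(\frac{\ell_n\sqrt{nh_n}}{\hat\sigma\|K^*\|_2}(\hat d_{\infty,n}-d_{\infty,n}) - \ell_n^2 > q_{0,1-\alpha}\big)$; applying the relevant case of Theorem~\ref{thm:Dn} (which gives a limit $\Gum_{\log\{\lambda(\Ec)/(x_1-x_0)\}}$, $\Gum_{\log 2}$ with $\Delta$ excluded since $d_\infty=\Delta>0$ forbids $d_\infty=0$, or $-\infty$), together with the stochastic-dominance chain $-\infty \ll_S \Gum_{\log\{\lambda(\Ec)/(x_1-x_0)\}} \ll_S \Gum_0$ noted after the theorem (valid because $\lambda(\Ec)\le x_1-x_0$), yields $\limsup_n \Prob(H_0 \text{ rejected}) \le \Gum_0((q_{0,1-\alpha},\infty)) = \alpha$. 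For an arbitrary $\mu$ in $H_0$ with $d_\infty<\Delta$ one has $d_{\infty,n}\le d_\infty<\Delta$ and the same bound, with slack, gives $\limsup_n\Prob(H_0\text{ rejected})\le\alpha$, so the test has asymptotic level $\alpha$. Consistency is as in Part (1): if $d_\infty>\Delta$, then $\hat d_{\infty,n}\to d_\infty$ while the right-hand side of \eqref{test1rel} tends to $\Delta < d_\infty$, so the rejection probability tends to $1$.

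\emph{Main obstacle.} The routine part is the Slutsky/continuous-mapping bookkeeping; the one point requiring care is the level control in Part (2), where the null hypothesis is composite and its boundary is a complicated set: one must argue that \emph{no} regression function on the boundary can produce a limit stochastically larger than $\Gum_0$. This is exactly what Theorem~\ref{thm:Dn} delivers — for any $\mu$ with $d_\infty=\Delta>0$ the limit of the standardized statistic is either $\Gum_{\log\{\lambda(\Ec)/(x_1-x_0)\}}$ with $\lambda(\Ec)\le x_1-x_0$ (hence $\ll_S\Gum_0$), or $-\infty$ (when $\lambda(\Ec)=0$, using Assumption~\ref{assump:mu}(ii), which is available here) — so the worst case is uniformly bounded by $\Gum_0$, and the choice $q_{0,1-\alpha}$ in \eqref{test1rel} is precisely calibrated to it. A minor additional check is that the $o_\pr$ in Assumption~\ref{assump:ghat} and the $n h_n^7|\log h_n|\to 0$ bias condition are what make the centering $d_{\infty,n}$ (rather than $d_\infty$) legitimate, but these are already absorbed into Theorem~\ref{thm:Dn}.
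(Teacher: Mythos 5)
Your overall strategy coincides with the paper's: both proofs reduce the level statement to Theorem~\ref{thm:Dn} combined with the rate \eqref{eq:estLRV} for $\hat\sigma^2$, and both control the composite boundary $d_\infty=\Delta$ by observing that every attainable limit on the boundary --- namely $\Gum_{\log\{\lambda(\Ec)/(x_1-x_0)\}}$ with $\lambda(\Ec)\le x_1-x_0$, or $-\infty$ when $\lambda(\Ec)=0$ --- is stochastically dominated by $\Gum_0$, so that $q_{0,1-\alpha}$ is a valid critical value. That part of your argument is correct and matches the paper.

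There is, however, a genuine gap in your consistency argument. Under the alternative you claim that ``by Theorem~\ref{thm:Dn}(1) or (2)'' one has $\hat d_{\infty,n}=d_{\infty,n}+o_\pr(1)$. But part (2) requires $d_\infty=0$ and is therefore never available under the alternative, and part (1) requires $\lambda(\Ec)>0$. In the generic alternative case of a smooth $\mu$ whose extremal set consists of isolated points one has $\lambda(\Ec)=0$, so only Theorem~\ref{thm:Dn}(3) applies; divergence of the standardized statistic to $-\infty$ yields only the one-sided bound $\hat d_{\infty,n}\le d_{\infty,n}+o_\pr(1)$, whereas consistency requires the opposite inequality $\hat d_{\infty,n}\ge d_{\infty,n}-o_\pr(1)$. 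The paper closes exactly this case by a separate direct argument (display \eqref{eq:axel4}): the reverse triangle inequality for sup-norms together with Assumption~\ref{assump:ghat} and the uniform Gaussian approximation \eqref{eq:jackKnife} give $|\hat d_{\infty,n}-d_{\infty,n}|\le\sup_{t\in I_n}|\tilde\mu_{h_n}(t)-\mu(t)|+o_\pr(1)=o_\pr(1)$, which is two-sided. You need this (or at least a pointwise lower bound $\hat d_{\infty,n}\ge|\hat d_n(t_0)|$ at a near-maximizer $t_0$ of $|d|$) to complete the consistency claims in both parts. The remaining bookkeeping --- including the replacement of $\sigma$ by $\hat\sigma$, which needs $\ell_n^2\,|\hat\sigma/\sigma-1|=o_\pr(1)$ and hence the rate, not merely the consistency, in \eqref{eq:estLRV} --- is as in the paper.
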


It follows from the proof of Corollary \ref{FloZirkus} in Section \ref{subsec:proof3} that the test defined by  \eqref{test1rel}
for the relevant hypotheses \eqref{relh0} 
is  conservative for those models on the boundary of the null hypothesis for which  $\lambda(\Ec)$ is strictly smaller than $x_1-x_0$.
A  heuristic argument for this fact is that  the test does not use the quantiles of the $\Gum_{\log\{ \lambda(\Ec)/(x_1-x_0) \}}$-distribution 
in Theorem~\ref{thm:Dn}(1), but only  the quantiles of the (stochastically dominating) distribution $\Gum_{0}$. In  the following section we will address this 
 problem by   estimating  the unknown set $\Ec$ and develop 
a test which does not have this drawback.

	\section{An improved test based on estimation of the set of extremal points} \label{sec4}

The test in \eqref{test1rel} for the hypotheses in \eqref{relh0} suffers from the fact that the models on the boundary of the null hypothesis may have different limiting distributions, whence only the stochastically largest may be used to construct valid critical values. A workaround proposed in this section consists of estimating
the Lebesgue measure  $\lambda(\Ec)$ of the set of extremal points  first and then using a slightly different standardization for $\hat d_{\infty, n}$, resulting in the same limit distribution for all models with $d_\infty=\Delta>0$ and $\lambda(\Ec)>0$. Moreover, we will provide 
 an approximation result in terms of i.i.d.\ Gaussian variables, which may be used as yet another alternative to obtain valid critical values that are non-conservative for  models on the boundary of the null hypothesis.

For the estimation of $\lambda(\Ec)$, recall the definition of $I_n=[x_0 \vee h_n , x_1 \wedge (1-h_n)]$ and define, for some positive sequence $(\rho_n)_{n\in\N}=o(1)$, the set $\Ec_n=\Ec_n^+\cup \Ec_n^-$, where
\begin{equation}\label{eq:defE}
  \Ec_n^{\pm}=\Big \{ t\in I_n: d_{\infty,n} \mp d(t)\leq \rho_n \Big \}. 
 \end{equation}
Replacing $d(t)=\mu(t)-g(\mu)$ by $\hat d_n(t)=\tilde \mu_{h_n}(t)-\hat g$, we obtain the set-valued estimator $\hat{\Ec}_n = \hat{\Ec}_n^+ \cup \hat{\Ec}_n^-$, where
\[
 \hat{\Ec}_n^\pm =  \Big \{ t\in I_n: \hat d_{\infty, n} \mp \hat{d}_n(t)\leq \rho_n  \Big\}. 
 \]
 This estimator is consistent in the following sense.

\begin{theorem}\label{thm:EnEstimator} 
Let $\rho_n=o(1)$ be a positive sequence such that 
$\rho_n^2 nh_n / |\log (h_n)|\to\infty$. 
Further, suppose that Assumptions \ref{assump:kernel}, \ref{assump:error}, \ref{assump:mu}(i) and  \ref{assump:ghat}  are satisfied. If $\lambda(\Ec)=0$, let additionally Assumption \ref{assump:mu}(ii) and $\rho_n h_n^{-2} \to \infty$ be satisfied. Then, 
	\[ \frac{\lambda(\Ec_n)}{\lambda(\hat{\Ec}_n)}\convp 1. \]
\end{theorem}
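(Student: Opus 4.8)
The strategy is to control the symmetric difference between $\hat{\Ec}_n$ and $\Ec_n$ using the uniform closeness of $\hat d_n$ to $d$ on $I_n$, and then to show that $\lambda(\Ec_n)$ and $\lambda(\hat{\Ec}_n)$ are both bounded away from zero (or, in the $\lambda(\Ec)=0$ case, that they decay at the same rate). First I would invoke the uniform approximation that underlies Theorem~\ref{thm:Dn}: under the stated assumptions there is a deterministic sequence $r_n = \Oc\big(\sqrt{|\log(h_n)|/(nh_n)}\big)$ (up to the negligible bias and the $\hat g_n$-error, which is $o_\pr$ of this by Assumption~\ref{assump:ghat}) such that $\|\hat d_n - d\|_\infty = \Oc_\pr(r_n)$ and, simultaneously, $|\hat d_{\infty,n} - d_{\infty,n}| = \Oc_\pr(r_n)$. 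Since $\rho_n^2 n h_n/|\log(h_n)|\to\infty$, we have $r_n = o_\pr(\rho_n)$; fix an auxiliary sequence $\eps_n\to 0$ with $\eps_n\rho_n \ge r_n$ eventually with probability tending to one. On that event, for any $t$,
\[
\big|(\hat d_{\infty,n} \mp \hat d_n(t)) - (d_{\infty,n}\mp d(t))\big| \le 2\eps_n \rho_n,
\]
so the sandwich $\Ec_n^{\pm}((1-2\eps_n)\rho_n) \subseteq \hat{\Ec}_n^\pm \subseteq \Ec_n^\pm((1+2\eps_n)\rho_n)$ holds, where $\Ec_n^\pm(\rho)$ denotes the level set with threshold $\rho$. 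Hence it suffices to prove that $\lambda(\Ec_n^\pm((1\pm2\eps_n)\rho_n))/\lambda(\Ec_n^\pm(\rho_n))\to1$, i.e.\ a continuity-in-the-threshold statement for the level sets of the \emph{deterministic} function $d$.

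For that deterministic step I would split according to whether $\lambda(\Ec)>0$ or $\lambda(\Ec)=0$. When $\lambda(\Ec)>0$: by Remark~\ref{rem:concavity}(ii), $\Ec$ is a finite union of nondegenerate intervals (plus finitely many points), so $\Ec_n^\pm(\rho)\downarrow\Ec^\pm$ as $\rho\downarrow 0$ and $\lambda(\Ec_n^\pm(\rho))\to\lambda(\Ec^\pm)>0$; thus for any threshold sequence tending to $0$ the measure converges to the same positive constant, and the ratio tends to $1$. (One also uses $d_{\infty,n}\to d_\infty$ to replace $d_{\infty,n}$ by $d_\infty$ in the defining inequality at cost $o(\rho_n)$.) When $\lambda(\Ec)=0$: near each extremal point the concavity/convexity of Assumption~\ref{assump:mu}(ii) forces, via a Taylor-type lower bound, $|d(t)-(\pm d_\infty)| \gtrsim c\,\mathrm{dist}(t,\Ec^\pm)^2$ on $U_\gamma$, while the global Lipschitz-$C^2$ regularity of Assumption~\ref{assump:mu}(i) gives the matching upper bound; this pins down $\lambda(\Ec_n^\pm(\rho))\asymp \sqrt{\rho}$ (more precisely a sum of $\sqrt{\rho}$-contributions over the finitely many extremal points, with point-specific constants $\propto |d''|^{-1/2}$), so $\lambda(\Ec_n^\pm((1\pm2\eps_n)\rho_n))/\lambda(\Ec_n^\pm(\rho_n)) = (1\pm2\eps_n)^{1/2}(1+o(1))\to1$. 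The extra condition $\rho_n h_n^{-2}\to\infty$ in this case ensures the level set is not artificially truncated by the boundary restriction to $I_n$ (whose endpoints differ from $[x_0,x_1]$ by $h_n$, hence $h_n^2 = o(\rho_n)$ makes the boundary effect negligible relative to the $\sqrt{\rho_n}$ width); a short separate argument handles the possibility that an extremal point sits exactly at a boundary point of $[x_0,x_1]$, where one gets a one-sided $\sqrt{\rho}$ contribution.

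Finally I would assemble the pieces: on the high-probability event above,
\[
\frac{\lambda(\Ec_n)}{\lambda(\hat{\Ec}_n)}
\le \frac{\lambda(\Ec_n^+(\rho_n)) + \lambda(\Ec_n^-(\rho_n))}{\lambda(\Ec_n^+((1-2\eps_n)\rho_n)) + \lambda(\Ec_n^-((1-2\eps_n)\rho_n))},
\]
with the reverse inequality obtained by swapping the roles, and both bounds tend to $1$ by the deterministic step (using that $\Ec^+$ and $\Ec^-$ are disjoint, so the $\pm$ level sets are disjoint for $n$ large and the measures add). Since the bounding event has probability tending to one, $\lambda(\Ec_n)/\lambda(\hat{\Ec}_n)\convp 1$. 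The main obstacle is the deterministic level-set continuity estimate in the case $\lambda(\Ec)=0$: one must show the $\sqrt{\rho}$-asymptotics of $\lambda(\Ec_n^\pm(\rho))$ uniformly over the finitely many extremal points and, crucially, that the random perturbation $r_n$ is genuinely of smaller order than $\rho_n$ so that the multiplicative $(1\pm2\eps_n)^{1/2}$ distortion vanishes — this is exactly where the bandwidth/threshold balance $\rho_n^2 nh_n/|\log h_n|\to\infty$ (and $\rho_n h_n^{-2}\to\infty$) is used. Everything else is bookkeeping with the already-established uniform rate from the proof of Theorem~\ref{thm:Dn}.
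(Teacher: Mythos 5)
Your overall architecture coincides with the paper's: sandwich the random level sets $\hat\Ec_n^\pm$ between deterministic level sets of $d$ at thresholds $(1\pm O(\eps_n))\rho_n$ using the uniform rate $\|\hat d_n-d\|_\infty=o_\pr(\rho_n)$ (guaranteed by $\rho_n^2nh_n/|\log h_n|\to\infty$), and then reduce everything to a continuity-in-the-threshold statement for the deterministic level sets; this is exactly the paper's proof combined with its Proposition~\ref{prop:conv:quot}. The case $\lambda(\Ec)>0$ and the role of $\rho_nh_n^{-2}\to\infty$ (boundary truncation by $h_n$ being negligible against the level-set width) are also handled as in the paper.

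There is, however, a genuine gap in your deterministic step for the case $\lambda(\Ec)=0$. You claim that Assumption~\ref{assump:mu}(ii) forces a Taylor-type \emph{lower} bound $|d(t)-(\pm d_\infty)|\gtrsim c\,\mathrm{dist}(t,\Ec^\pm)^2$, hence $\lambda(\Ec_n^\pm(\rho))\asymp\sqrt{\rho}$ with constants proportional to $|d''|^{-1/2}$. Concavity near an extremal point together with Assumption~\ref{assump:mu}(i) gives only the \emph{upper} bound $d_\infty-d(t)\le\|d''\|_\infty\,\mathrm{dist}(t,\Ec^+)^2$ (this is what the paper uses in Proposition~\ref{prop:BlowingUp}(i)); the matching lower bound requires $\inf_{t\in\Ec}|d''(t)|>0$, which is \emph{not} assumed. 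For an admissible $d$ with, say, a quartic maximum $d(t)=d_\infty-(t-t_0)^4$, the level-set width scales like $\rho^{1/4}$, so your $\sqrt{\rho}$-asymptotics fails. The conclusion you need — that $\lambda(\Ec_n^\pm((1+2\eps_n)\rho_n))/\lambda(\Ec_n^\pm(\rho_n))\to1$ — is still true, but it must be derived directly from concavity rather than from a power-law expansion: comparing the slope of the chord of $d$ over $[x_{i,<,2},x_{i,<,1}]$ (where $d$ passes from level $d_\infty-(1+2e_n)\rho_n$ to level $d_\infty-\rho_n$) with the chord over $[x_{i,<,1},t_i]$ yields
\[
\frac{\rho_n}{t_i-x_{i,<,1}}\;\le\;\frac{2e_n\rho_n}{x_{i,<,1}-x_{i,<,2}},
\]
hence the incremental width is at most $2e_n$ times the existing width, uniformly over the finitely many extremal points and independently of any second-order nondegeneracy. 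This is the paper's Proposition~\ref{prop:conv:quot}, and it is the piece you would need to substitute for your $\sqrt{\rho}$ argument to make the proof valid under the stated assumptions.
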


Next, we present a variation of Theorem~\ref{thm:Dn} which is based on the alternative scaling sequence $\ell_n(\Ec_n)$ (instead of $\ell_n=\ell_n([x_0,x_1])$ as used in Theorem~\ref{thm:Dn}), where 
\begin{equation} \label{lnA}
	\ell_n(\Ac)=\sqrt{2\log\big(\tfrac{\Lambda_{K}\lambda(\Ac \cap I_n)}{2\pi h_n}\big)}, \qquad \Ac \in \Bc([x_0,x_1])
	\end{equation}	
	and $\Bc([x_0,x_1])$ denote the Borel subsets of $[x_0,x_1]$.

\begin{theorem}\label{conv:DnEn}
Suppose that Assumptions \ref{assump:kernel}, \ref{assump:error}, \ref{assump:mu} and  \ref{assump:ghat}  are satisfied. If $\lambda(\Ec)=0$, let additionally $\rho^{1/2}_{n}h_n^{-1}\to \infty$. Let $(V_{i})_{{i\in \N} }$ denote an i.i.d.\  sequence of standard normally distributed random variables and define
\begin{align}\label{eq:Gn}
	 	G_{n,1} &=\frac{\ell_n(\Ec_n)  \sqrt{nh_n}}
		 {\|K^*\|_{2}} \sup_{t\in\Ec_n} \Big\{ \frac1{nh_n} \sum_{i=1}^{n}V_iK_{h_n}^*\big(\tfrac{i}{n}-t\big)  \Big\} 
		-\ell_n^2(\Ec_n),  \\
		\label{eq:Gn2}
		G_{n,2} &=\frac{\ell_n(\Ec_n)  \sqrt{nh_n}}
		 {\|K^*\|_{2}} \sup_{t\in\Ec_n} \Big| \frac1{nh_n} \sum_{i=1}^{n}V_iK_{h_n}^*\big(\tfrac{i}{n}-t\big)  \Big| -\ell_n^2(\Ec_n).
	 	\end{align}
If $d_\infty>0$, then 
\[  
\pr\Big(\tfrac{\ell_n(\Ec_n)\sqrt{nh_n}}{\sigma\|K^*\|_2} \big( \hat d_{\infty, n} - d_{\infty, n}\big)-\ell_n^2(\Ec_n)\leq x\Big)\geq \pr(G_{n,1}\leq x)+o(1), 
\]
with equality if $\lambda(\Ec)>0$.  If $d_\infty=0$, then
\[  
\pr\Big(\tfrac{\ell_n(\Ec_n)\sqrt{nh_n}}{\sigma\|K^*\|_2} \big( \hat d_{\infty, n} - d_{\infty, n}\big)-\ell_n^2(\Ec_n)\leq x\Big) = \pr(G_{n,2} \leq x)+o(1).
\]	
Moreover,
		\[
		G_{n,1} \convw \Gum_0, \qquad G_{n,2} \convw \Gum_{\log(2)}
		\]
\end{theorem}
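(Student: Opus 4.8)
The plan is to transfer the problem, via a Gaussian approximation, from the statistic $\hat d_{\infty,n}$ to the supremum of the Gaussian process $t\mapsto (nh_n)^{-1}\sum_i V_i K^*_{h_n}(i/n-t)$ over the (random) set $\Ec_n$, and then to carry out a classical extreme-value analysis of that Gaussian supremum. First I would recall the decomposition $\tilde\mu_{h_n}(t)-\hat g_n-\{\mu(t)-g(\mu)\} = (nh_n)^{-1}\sum_i \eps_i K^*_{h_n}(i/n-t) + R_n(t)$, where $R_n$ collects the bias term (controlled by $nh_n^7|\log h_n|\to0$ together with Assumption~\ref{assump:mu}(i)) and the $\hat g_n$-error (controlled by Assumption~\ref{assump:ghat}); both are $o_\pr((\ell_n\sqrt{nh_n})^{-1})$ uniformly in $t\in I_n$, hence negligible after multiplication by $\ell_n\sqrt{nh_n}$. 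This is exactly the expansion already used in the proof of Theorem~\ref{thm:Dn}, so I would invoke it. Next, a strong-approximation/coupling argument (Wu-type Gaussian approximation for physical-dependence processes, using Assumption~\ref{assump:error}(i) and the $\log^4 n$-rate condition on the bandwidth) replaces $(nh_n)^{-1}\sum_i\eps_i K^*_{h_n}(i/n-t)$ by $\sigma(nh_n)^{-1}\sum_i V_i K^*_{h_n}(i/n-t)$ with error $o_\pr((\ell_n\sqrt{nh_n})^{-1})$ uniformly over $t$; this is again the same device as in Section~\ref{sec3}.

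The core new point is the reduction of the supremum over $I_n$ to a supremum over $\Ec_n$. Since $\hat d_{\infty,n}=\sup_{t\in I_n}|\hat d_n(t)|$ and the extremum of $|d|$ is attained exactly on $\Ec$, and since $\Ec_n$ is a $\rho_n$-neighbourhood of $\Ec$ with $\rho_n^2 nh_n/|\log h_n|\to\infty$ (so that $\rho_n$ dominates the stochastic fluctuations of order $(\ell_n\sqrt{nh_n})^{-1}$ but is still $o(1)$), one shows that with probability tending to one the maximiser of $|\hat d_n|$ over $I_n$ lies in $\Ec_n$, and that on $\Ec_n$ the deterministic part $d(t)$ differs from $d_{\infty,n}$ by at most $\rho_n$, which is again negligible on the relevant scale. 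This yields
\[
\tfrac{\ell_n(\Ec_n)\sqrt{nh_n}}{\sigma\|K^*\|_2}(\hat d_{\infty,n}-d_{\infty,n})-\ell_n^2(\Ec_n)
= \tfrac{\ell_n(\Ec_n)\sqrt{nh_n}}{\|K^*\|_2}\sup_{t\in\Ec_n}\Big\{\pm\tfrac1{nh_n}\sum_i V_i K^*_{h_n}(\tfrac{i}{n}-t)\Big\}-\ell_n^2(\Ec_n)+o_\pr(1),
\]
where the sign is $+$ on $\Ec_n^+$ and $-$ on $\Ec_n^-$. If $d_\infty>0$ the sets $\Ec^+,\Ec^-$ are disjoint and bounded away from each other, and on $\Ec_n^-$ the relevant quantity is a maximum of $-(nh_n)^{-1}\sum V_i K^*_{h_n}$; because only one of $\Ec_n^+,\Ec_n^-$ actually realises the overall supremum of $|\hat d_n|$, one gets the \emph{inequality} $\pr(\cdots\le x)\ge\pr(G_{n,1}\le x)+o(1)$ in general, with equality precisely when $\lambda(\Ec)>0$ (so that the $\Ec_n$ of positive measure dominates and the one-sided sup $G_{n,1}$ is the exact surrogate); when $d_\infty=0$ one has $\Ec_n=I_n$ and must keep the absolute value, giving the exact identity with $G_{n,2}$. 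When $\lambda(\Ec)=0$ the extra condition $\rho_n^{1/2}h_n^{-1}\to\infty$ guarantees that $\Ec_n$, though shrinking, still contains enough ``independent blocks'' of length $\asymp h_n$ for the subsequent extreme-value limit to kick in, while $\lambda(\Ec_n\cap I_n)\to 0$ forces the comparison to be a one-sided bound rather than an equality.

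Finally, for the weak limits of $G_{n,1}$ and $G_{n,2}$, I would treat $(nh_n)^{-1}\sum_i V_i K^*_{h_n}(i/n-t)$ as (approximately) a stationary Gaussian process in $t$ with variance $\asymp (nh_n)^{-1}\|K^*\|_2^2$ and correlation function governed by the normalised autocorrelation of $K^*$, whose curvature at $0$ produces the constant $\Lambda_K=\|(K^*)'\|_2/\|K^*\|_2$ entering $\ell_n(\Ac)$. Standard Gumbel-type extreme value theory for smooth Gaussian processes (the Pickands/Bickel--Rosenblatt machinery, as in the references cited for $\|\hat\mu-\mu\|_\infty$) then gives that $\ell_n(\Ec_n)\sqrt{nh_n}\|K^*\|_2^{-1}\sup_{t\in\Ec_n}(\cdot)-\ell_n^2(\Ec_n)$ converges to a Gumbel law, with location $\log\{\lambda(\Ec_n\cap I_n)/\lambda(I_n)\cdot(\text{const})\}$ absorbed into the normalisation so that $G_{n,1}\convw\Gum_0$; the two-sided version doubles the number of exceedance clusters and shifts the location by $\log 2$, giving $G_{n,2}\convw\Gum_{\log2}$. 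Here one must be slightly careful because $\Ec_n$ is random and possibly shrinking: one conditions on $\lambda(\Ec_n)$, uses that $\Ec_n$ is (by Remark~\ref{rem:concavity}) a bounded union of intervals and points, and checks that the extreme-value asymptotics are uniform over the relevant range of $\lambda(\Ec_n)$ thanks to Theorem~\ref{thm:EnEstimator} and the growth condition $\rho_n^2 nh_n/|\log h_n|\to\infty$. The main obstacle is precisely this last point—making the classical Gaussian extreme-value limit uniform over a data-dependent, possibly vanishing index set, and bookkeeping the $\ell_n(\Ec_n)$ versus $\ell_n$ discrepancy—whereas the Gaussian approximation and the bias/benchmark negligibility are routine given the earlier sections.
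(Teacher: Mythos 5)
Your overall route coincides with the paper's: Gaussian/bias approximation as in Section~\ref{sec3}, localization of the supremum from $I_n$ to $\Ec_n$ via the gap $\rho_n \gg \ell_n/\sqrt{nh_n}$, removal of the absolute value on $\Ec_n^{\pm}$ by symmetry of the Gaussian process, and a Piterbarg-type Gumbel limit over $\Ec_n$. Two points in your sketch are inaccurate, however. First, $\Ec_n$ is \emph{deterministic} (it is defined through $d$ and $d_{\infty,n}$, not through $\hat d_n$); the random set is $\hat\Ec_n$, which enters only in Corollary~\ref{test2}. There is therefore no need to ``condition on $\lambda(\Ec_n)$'' or to invoke Theorem~\ref{thm:EnEstimator} here. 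What is actually needed is that the Gumbel limit of Remark~\ref{stationary} extends to the deterministic but possibly shrinking sets $\Ec_n$; the paper secures this via Proposition~\ref{prop:BlowingUp}, verifying Piterbarg's blowing-up property for $h_n^{-1}\Ec_n$ by writing $\Ec_n$ as a union of boundedly many intervals (Assumption~\ref{assump:mu}(ii)) and using $\rho_n^{1/2}h_n^{-1}\to\infty$, through a Taylor expansion at the extremal points, to obtain $\lambda(\Ec_n)/h_n\to\infty$. You correctly identify this as the main obstacle but leave it unresolved.

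Second, your explanation of the source of the inequality (versus equality) is not the right one. Combining the two one-sided suprema over $\Ec_n^+$ and $\Ec_n^-$ into the one-sided supremum over $\Ec_n$ loses nothing: by symmetry of the Gaussian and asymptotic independence of the two blocks, the maximum of $\sup_{\Ec_n^+}Z$ and $\sup_{\Ec_n^-}(-Z)$ is distributionally equal to $\sup_{\Ec_n}Z$. The slack arises earlier, from replacing $d(t)$ by its upper bound $d_{\infty,n}$ on $\Ec_n^+$: one has $\hat d_n(t)-d_{\infty,n}=\{\hat d_n(t)-d(t)\}+\{d(t)-d_{\infty,n}\}\le \hat d_n(t)-d(t)$, so the centred statistic is only stochastically dominated by the centred Gaussian supremum. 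Establishing equality when $\lambda(\Ec)>0$ requires the additional argument of Proposition~\ref{thm:Rn} (the comparison of the independent block maxima $Y_n$ and $Y_n'$), showing that the supremum over $\Ec_n$ is asymptotically attained on $\Ec$ itself, where $d(t)=d_{\infty,n}$ exactly. Your sketch does not contain this step, so as written the inequality/equality dichotomy is asserted rather than proved.
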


We can now use the previous two theorems to define consistent asymptotic level $\alpha$ tests for the hypotheses \eqref{classh0} and \eqref{relh0}, based on a rescaling of
the statistic   $\hat d_{\infty, n}$ which makes use of the estimator $\hat \Ec_n$.  To be precise, we propose  to reject the null hypotheses in \eqref{classh0} and \eqref{relh0} if
\begin{equation} 
\label{test:basedOnEstimator} 
\sup_{t\in I_n}|\hat d_n(t)| > \{ q_{a,1-\alpha}+\ell_n^2(\hat\Ec_n) \} \frac{\hat{\sigma}\|K^*\|_2}{\sqrt{nh_n}\ell_n(\hat\Ec_n)}+\Delta, 
\end{equation}
where $\hat \sigma^{2}$ denotes the
 estimator of the long-run variance defined in \eqref{lrvest} 
 and
$q_{a, 1-\alpha}$ denotes the $(1-\alpha)$-quantile of the Gumbel distribution $\Gum_a$ with location parameter $a=0$ if $\Delta>0$ and $a=\log(2)$ if $\Delta=0$. 

	\begin{corollary}  \label{test2}
Under the assumptions of Theorem \ref{conv:DnEn}, the test \eqref{test:basedOnEstimator} is consistent and has asymptotic level $\alpha$ for the hypotheses in \eqref{classh0} and \eqref{relh0}.
	\end{corollary}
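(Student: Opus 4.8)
The plan is to derive Corollary~\ref{test2} from Theorems~\ref{thm:EnEstimator} and~\ref{conv:DnEn} by the same Slutsky-type arguments used for Corollary~\ref{FloZirkus}, with the additional bookkeeping needed to pass from the deterministic scaling $\ell_n(\Ec_n)$ to the estimated scaling $\ell_n(\hat\Ec_n)$. First I would fix a model on the boundary of the null hypothesis, i.e.\ $d_\infty=\Delta$, and treat the two regimes $\Delta=0$ and $\Delta>0$ separately exactly as in Section~\ref{sec3}. In the case $\Delta=0$ we automatically have $d_\infty=0$, $\lambda(\Ec)=x_1-x_0$, and $d_{\infty,n}=0$; in the case $\Delta>0$ on the boundary we have $d_{\infty,n}=\sup_{t\in I_n}|\mu(t)-g(\mu)|\le d_\infty=\Delta$ with $d_{\infty,n}\to\Delta$, and we must additionally distinguish $\lambda(\Ec)>0$ (where Theorem~\ref{conv:DnEn} gives an equality) from $\lambda(\Ec)=0$ (where it gives an inequality, so the test will be conservative but still of asymptotic level $\alpha$).

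The key step is to show that replacing $\ell_n(\Ec_n)$ by $\ell_n(\hat\Ec_n)$ in the standardization does not affect the limit. By Theorem~\ref{thm:EnEstimator}, $\lambda(\hat\Ec_n)/\lambda(\Ec_n)\convp 1$, and since $\Ec_n\subseteq I_n$ and $\hat\Ec_n\subseteq I_n$ we have $\ell_n(\Ac)=\sqrt{2\log(\Lambda_K\lambda(\Ac)/(2\pi h_n))}$ for $\Ac\in\{\Ec_n,\hat\Ec_n\}$; writing $\ell_n^2(\hat\Ec_n)-\ell_n^2(\Ec_n)=2\log(\lambda(\hat\Ec_n)/\lambda(\Ec_n))=o_\pr(1)$ and $\ell_n(\hat\Ec_n)/\ell_n(\Ec_n)\to 1$ in probability (because both diverge at rate $\sqrt{2|\log h_n|}$ and their squared difference is $o_\pr(1)$), one sees that the map sending the left-hand pivot based on $\Ec_n$ to that based on $\hat\Ec_n$ is an asymptotically-identity perturbation. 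Concretely, the quantity in \eqref{test:basedOnEstimator} rearranges to
\[
\frac{\ell_n(\hat\Ec_n)\sqrt{nh_n}}{\hat\sigma\|K^*\|_2}\big(\sup_{t\in I_n}|\hat d_n(t)|-\Delta\big)-\ell_n^2(\hat\Ec_n) > q_{a,1-\alpha},
\]
and using $\hat\sigma^2=\sigma^2+o_\pr(1)$ from \eqref{eq:estLRV}, the ratio $\ell_n(\hat\Ec_n)/\ell_n(\Ec_n)\convp 1$, and $\ell_n^2(\hat\Ec_n)-\ell_n^2(\Ec_n)\convp 0$, a Slutsky argument shows the left-hand side has the same limiting behaviour as
\[
\frac{\ell_n(\Ec_n)\sqrt{nh_n}}{\sigma\|K^*\|_2}\big(\hat d_{\infty,n}-d_{\infty,n}\big)-\ell_n^2(\Ec_n)
\]
on the boundary, after absorbing the negligible term $\tfrac{\ell_n(\Ec_n)\sqrt{nh_n}}{\sigma\|K^*\|_2}(d_{\infty,n}-\Delta)$, which is $\le 0$ for $\Delta>0$ and $=0$ for $\Delta=0$; this term is precisely what keeps the test of level $\alpha$ (not above). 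For the level statement one then invokes Theorem~\ref{conv:DnEn}: on the boundary with $\Delta>0$ the pivot is stochastically dominated (with equality if $\lambda(\Ec)>0$) by $G_{n,1}\convw\Gum_0$, so the rejection probability tends to at most $\pr(\Gum_0>q_{0,1-\alpha})=\alpha$; with $\Delta=0$ it equals $G_{n,2}\convw\Gum_{\log 2}$, giving exactly $\alpha$ with the choice $a=\log 2$. Monotonicity of the rejection region in $d_\infty$ (smaller $d_\infty$ only decreases $\sup_t|\hat d_n(t)|$ stochastically in the relevant sense) extends the level bound from the boundary to all of $H_0$.

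For consistency, fix an alternative with $d_\infty>\Delta$, so $d_{\infty,n}\to d_\infty>\Delta$. The right-hand side of \eqref{test:basedOnEstimator} equals $\Delta+O_\pr(\ell_n/\sqrt{nh_n})=\Delta+o_\pr(1)$ since $\ell_n=O(\sqrt{|\log h_n|})$ and $nh_n\to\infty$, while the left-hand side $\sup_{t\in I_n}|\hat d_n(t)|\convp d_\infty$ by the uniform consistency of $\tilde\mu_{h_n}$ (a by-product of the Gaussian approximation underlying Theorem~\ref{thm:Dn}, together with Assumption~\ref{assump:ghat}); hence the rejection probability tends to $1$. The main obstacle I anticipate is the careful justification that $\ell_n(\hat\Ec_n)$ may be substituted for $\ell_n(\Ec_n)$ uniformly enough for Slutsky to apply — in particular controlling $\ell_n(\hat\Ec_n)/\ell_n(\Ec_n)$ and $\ell_n^2(\hat\Ec_n)-\ell_n^2(\Ec_n)$ on the event where $\lambda(\hat\Ec_n)$ could a priori be very small (so that $\log\lambda(\hat\Ec_n)$ is large negative), which requires combining the rate condition $\rho_n^2nh_n/|\log h_n|\to\infty$ from Theorem~\ref{thm:EnEstimator} with the structure of $\hat\Ec_n$ to keep $\lambda(\hat\Ec_n)$ bounded below by something like $\lambda(\Ec)$ or, when $\lambda(\Ec)=0$, by a sequence shrinking slowly enough that $\log\lambda(\hat\Ec_n)=o(|\log h_n|)$; all remaining steps are routine Slutsky and continuous-mapping bookkeeping.
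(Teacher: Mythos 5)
Your proposal is correct and follows essentially the same route as the paper: use Theorem~\ref{thm:EnEstimator} to justify (via a Slutsky-type argument and the $\ell_n^2(\hat\Ec_n)-\ell_n^2(\Ec_n)=2\log(\lambda(\hat\Ec_n)/\lambda(\Ec_n))=o_\pr(1)$ computation, which the paper carries out analogously to \eqref{eq:lnEln-ln2}) the replacement of $\ell_n(\Ec_n)$ and $\sigma$ by $\ell_n(\hat\Ec_n)$ and $\hat\sigma$, then invoke Theorem~\ref{conv:DnEn} for the level bound on the boundary (with the stochastic-dominance inequality covering $\lambda(\Ec)=0$) and the consistency argument of Corollary~\ref{FloZirkus}, absorbing the nonpositive term $\tfrac{\ell_n(\hat\Ec_n)\sqrt{nh_n}}{\hat\sigma\|K^*\|_2}(d_{\infty,n}-\Delta)$ under $H_0$.
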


 From Theorem \ref{thm:EnEstimator}, Theorem \ref{conv:DnEn} and \eqref{eq:estLRV}, we also obtain that, if $d_\infty>0$,
  \begin{equation*}
 \pr\Big(\tfrac{\ell_n(\hat \Ec_n)\sqrt{nh_n}}{\hat \sigma\|K^*\|_2} \big( \hat d_{\infty, n} - d_{\infty, n}\big)-\ell_n^2(\hat \Ec_n)\leq x\Big)\geq \pr(G_{n,1}\leq x)+o(1),
\end{equation*}
with $G_{n,1}$ as defined in \eqref{eq:Gn}. Hence, while the test in  
\eqref{test:basedOnEstimator} (for the case $\Delta>0$) was based on  using the quantiles $q_{0,1-\alpha}$ of the limiting distribution $\Gum_0$ of $G_{n,1}$, we may   alternatively use the quantiles $q_{\scs \hat G_{n,1},1-\alpha}$ of the distribution of  $\hat G_{n,1}$ (or $q_{\scs \hat G_{n,2}, 1-\alpha}$ in case $\Delta=0$), where $\hat G_{n,j}$ is defined analogously to $G_{n,j}$ in \eqref{eq:Gn} and \eqref{eq:Gn2},  but with $\Ec_n$ replaced by $\hat \Ec_n$.
 Note that these quantiles may easily be simulated up to an arbitrary precision. More precisely, we propose to reject the null hypotheses in \eqref{classh0} and \eqref{relh0} if
  \begin{equation}\label{test:approxQuantiles}\sup_{t\in I_n}|\hat{d}_n(t)| > \{ q_{\hat G_{n,j},1-\alpha}+\ell_n^2(\hat\Ec_n) \}\frac{\hat{\sigma}\|K^*\|_2}{\sqrt{nh_n}\ell_n(\hat\Ec_n)}+\Delta, 
  \end{equation}
  where $j=1$ if $\Delta>0$ and $j=2$ if $\Delta=0$.  It can be shown by similar arguments as given in the proof of Corollary \ref{test2} that this test is consistent and has asymptotic level $\alpha$. 
The numerical results in Section \ref{sec7} suggest that the test in  \eqref{test:approxQuantiles} exhibits better finite sample properties than the test in  \eqref{test:basedOnEstimator}. This phenomenon may be heuristically explained by the fact that the convergence rate of a maximum of independent normal random variables to the Gumbel distribution is rather slow.

\section{Estimating the time of the first relevant deviation} \label{sec5} 

	The aim of this section is to develop an estimator for the first relevant deviation
	 \[
	 t^* = \inf\{ t\in[x_0,x_1] : |d(t)|\geq \Delta \},
	 \] 
	 where we use the convention that $\inf(\emptyset)=+\infty$.
	First note that by continuity of $d$ the point $t^*$ can be represented as 
	\[
	t^*=x_0+\int_{x_0}^{x_1} \id \Big (\max_{t\in[x_0,s]}|d(t)|<\Delta \Big )\diff s + \infty \cdot \id \Big (d_\infty <\Delta \Big ).
	\]
	Obviously, the properties of any estimator will depend on the smoothness of the function $d$ at the point $t^*$. To capture the degree of smoothness, assume that there exist constants $\kappa>0$ and $c_\kappa>0$ such that
	\begin{equation}\label{eq:smoothnessMu}
	\lim_{s \uparrow t^*}\frac{|d(t^*)-d(s)|}{(t^*-s)^\kappa} =  c_\kappa.
	\end{equation}
	Note that $\kappa=1$ if the function $d$ is differentiable at the point  $t^*$ with non-vanishing derivative.
		
		\begin{theorem}\label{thm:estFirstExceedance}
	Let  Assumptions \ref{assump:kernel}, \ref{assump:error}, \ref{assump:mu} and \eqref{eq:smoothnessMu} be satisfied and let $\delta_n $ denote a positive sequence such that $\delta_n\to 0$
	and  $\liminf_{n\to  \infty}\tfrac{\sqrt{nh_n}\delta_n}{\sigma \|K^*\|_2}-  \ell_n  >0$. If $t^* \in [x_0, x_1]$, then the estimator
$$
	\hat{t}^* =(h_n\vee x_0)+ \int_{I_n} \id \Big (\max_{t\in[h_n \vee x_0,s]}|\hat{d}_n(t) |<\Delta - \delta_n \Big )\diff s  
	+ \infty \cdot  \id \Big (\hat d_{\infty,n} <\Delta - \delta_n \Big )
$$
	satisfies
	$$\hat{t}^* = t^* + \Oc_\pr\Big(\big(\tfrac{|\log(h_n)|^{1/2} }{\sqrt{nh_n}}+\delta_n\big)^{1/\kappa}\vee h_n\Big) =  t^* + o_\Prob(1).$$
	If $t^*=\infty$, then $\Prob(\hat t^*<\infty)=o_\Prob(1)$.
	\end{theorem}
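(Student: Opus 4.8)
The plan is to reduce everything to a single uniform-deviation statement for the bias-corrected local-linear estimator and then run a deterministic analysis of the two indicator-based integrals on the good event. As the first step I would establish that, under Assumptions~\ref{assump:kernel}, \ref{assump:error}, \ref{assump:mu} and \ref{assump:ghat}, one has the uniform rate
\[
\|\hat d_n - d\|_{\infty, I_n} = \sup_{t\in I_n} \big| \tilde\mu_{h_n}(t) - \hat g_n - (\mu(t)-g(\mu)) \big| = \Oc_\pr\Big(\tfrac{|\log(h_n)|^{1/2}}{\sqrt{nh_n}}\Big) =: \Oc_\pr(r_n).
\]
This is exactly the kind of bound underlying Section~\ref{sec8} and Theorem~\ref{thm:Dn}: the stochastic part of $\tilde\mu_{h_n}$ has sup-norm of order $r_n$ by Gaussian approximation together with the classical maximal inequality for kernel-smoothed partial sums, the bias of the Jackknife-corrected estimator is $\Oc(h_n^3)=o(r_n)$ under Assumption~\ref{assump:mu}(i) and $nh_n^7|\log h_n|\to 0$ (note $nh_n^7|\log h_n|\to 0$ is not assumed here, so one instead keeps the $\Oc(h_n)$-truncation error from restricting to $I_n$ in the final rate, which is why the ``$\vee\, h_n$'' term appears), and $|\hat g_n - g(\mu)| = o_\pr(r_n)$ by Assumption~\ref{assump:ghat}. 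I would also note $|d_{\infty,n} - d_\infty| = \Oc(h_n)$, coming only from the shrinking of $[x_0,x_1]$ to $I_n$ and Lipschitz continuity of $d$.

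Next I would work on the event $A_n = \{\|\hat d_n - d\|_{\infty,I_n} \le \delta_n - \ell_n \sigma\|K^*\|_2/\sqrt{nh_n}\}$, which has probability tending to one precisely because $\liminf(\sqrt{nh_n}\delta_n/(\sigma\|K^*\|_2) - \ell_n) > 0$ forces the right-hand threshold to dominate $r_n$. On $A_n$ one has the two-sided sandwich $|d(t)| - \delta_n \le |\hat d_n(t)| \le |d(t)| + \delta_n$ for all $t\in I_n$ (after absorbing the $h_n$-boundary gap into $\delta_n$, for $n$ large). Consider first $t^*\in[x_0,x_1]$. For any $s < t^* - C(r_n+\delta_n)^{1/\kappa}$ the smoothness condition \eqref{eq:smoothnessMu} gives $\max_{t\in[x_0,s]}|d(t)| < \Delta - 2\delta_n$ for a suitable constant $C$ (using that $d$ stays strictly below $\Delta$ on $[x_0,t^*)$ and the exact rate $c_\kappa$ at $t^*$), hence on $A_n$ the integrand $\id(\max_{t\in[h_n\vee x_0,s]}|\hat d_n(t)| < \Delta-\delta_n)$ equals $1$; conversely, for any $s > t^* + C'(r_n+\delta_n)^{1/\kappa}$ one has $|d(s')|>\Delta+\delta_n$ for some $s'\le s$ arbitrarily close to $t^*$ from above, forcing the integrand to vanish on $A_n$. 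Integrating these two bounds over $I_n$, and adding the $\Oc(h_n)$ discrepancy between starting the integral at $x_0$ versus $h_n\vee x_0$, yields $|\hat t^* - t^*| = \Oc_\pr\big((r_n+\delta_n)^{1/\kappa}\vee h_n\big)$; the indicator $\id(\hat d_{\infty,n}<\Delta-\delta_n)$ is identically $0$ on $A_n$ since $d_\infty \ge \Delta$, so the mass-at-infinity term is inert. The $o_\Prob(1)$ consequence is immediate from $r_n,\delta_n,h_n\to 0$.

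For the case $t^*=\infty$, i.e.\ $d_\infty < \Delta$, the same event $A_n$ gives $\hat d_{\infty,n} \le d_{\infty,n} + \delta_n - \ell_n\sigma\|K^*\|_2/\sqrt{nh_n} < \Delta - \delta_n$ for all large $n$ (since $d_{\infty,n}\to d_\infty < \Delta$ and the subtracted term is negative), so $\id(\hat d_{\infty,n}<\Delta-\delta_n)=1$; simultaneously $\max_{t\in[h_n\vee x_0,s]}|\hat d_n(t)| \le d_\infty + \delta_n < \Delta - \delta_n$ for every $s$, so the integrand is $1$ throughout $I_n$ and $\hat t^* = (h_n\vee x_0) + \lambda(I_n) + \infty = \infty$ on $A_n$, giving $\Prob(\hat t^* < \infty)\to 0$. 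The main obstacle in all of this is the first step: obtaining the uniform $\Oc_\pr(r_n)$ bound on $\hat d_n - d$ with the correct $|\log h_n|^{1/2}$ factor under the present (relatively mild) bandwidth conditions, which requires the Gaussian approximation / maximal-inequality machinery; once that is in hand, the rest is a deterministic bookkeeping argument driven by \eqref{eq:smoothnessMu}.
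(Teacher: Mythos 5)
Your overall strategy coincides with the paper's: control $\sup_{t\in I_n}|\hat d_n(t)-d(t)|$ at the rate $\ell_n\sigma\|K^*\|_2/\sqrt{nh_n}\asymp|\log h_n|^{1/2}/\sqrt{nh_n}$ via the Gaussian approximation of Section~\ref{sec8}, then use \eqref{eq:smoothnessMu} to convert the threshold crossing into a distance. However, two steps are genuinely flawed. First, your good event $A_n=\{\|\hat d_n-d\|_{\infty,I_n}\le \delta_n-\ell_n\sigma\|K^*\|_2/\sqrt{nh_n}\}$ does \emph{not} have probability tending to one: the hypothesis $\liminf_n(\sqrt{nh_n}\delta_n/(\sigma\|K^*\|_2)-\ell_n)>0$ only guarantees $\delta_n-\ell_n\sigma\|K^*\|_2/\sqrt{nh_n}\ge c\sigma\|K^*\|_2/\sqrt{nh_n}$, which is smaller by a factor $\ell_n\to\infty$ than the typical magnitude of the sup-deviation; when $\delta_n$ is of the minimal admissible order, $\Prob(A_n)\to 0$. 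What the assumption does give (and what the paper uses) is $\Prob(\|\hat d_n-d\|_{\infty,I_n}\le\delta_n)\to 1$, since $\tfrac{\ell_n\sqrt{nh_n}}{\sigma\|K^*\|_2}\|\hat d_n-d\|_{\infty,I_n}-\ell_n^2$ is tight and the corresponding threshold $\ell_n(\sqrt{nh_n}\delta_n/(\sigma\|K^*\|_2)-\ell_n)\ge c\,\ell_n\to\infty$. Your left-hand bound then needs a separate (but routine) event of the form $\{\|\hat d_n-d\|_{\infty,I_n}\le 2\sigma\|K^*\|_2\ell_n/\sqrt{nh_n}\}$, which is exactly how the paper calibrates $\gamma_n=(2\sigma\|K^*\|_2\ell_n/\sqrt{nh_n}+\delta_n)^{1/\kappa}$.

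Second, your argument for $s>t^*+C'(r_n+\delta_n)^{1/\kappa}$ asserts that $|d(s')|>\Delta+\delta_n$ for some $s'$ just to the right of $t^*$. This does not follow from the hypotheses: condition \eqref{eq:smoothnessMu} is one-sided (a limit as $s\uparrow t^*$) and says nothing about $d$ on $(t^*,x_1]$; moreover $|d|$ need never exceed $\Delta$ at all (e.g.\ when $d_\infty=\Delta$ is attained exactly at $t^*$, a boundary case that is central to the whole paper). The correct — and simpler — observation is that $|d(t^*)|=\Delta$ already forces $|\hat d_n(t^*)|\ge\Delta-\delta_n$ on the event $\{\|\hat d_n-d\|_{\infty,I_n}\le\delta_n\}$, so the indicator $\id(\max_{t\in[h_n\vee x_0,s]}|\hat d_n(t)|<\Delta-\delta_n)$ vanishes for every $s\ge t^*$ once $t^*\in I_n$; this yields $\Prob(\hat t^*>t^*+h_n)=o(1)$ directly, which is the paper's route. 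With these two repairs (and noting, as a minor point, that the ``$\vee\,h_n$'' in the rate comes from this boundary slack rather than from the Jackknife bias), your argument matches the published proof.
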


	\section{Extension to non-stationary error processes} \label{sec6}

In this section, we extend the theory developed in the previous sections to the model
\begin{equation}\label{modneu}
X_{i,n} = \mu(i/n)+\eps_{i,n}, \qquad i=1, \dots, n,
\end{equation}
with a triangular array of centered but possibly non-stationary (e.g., heteroscedastic) errors $\{ \eps_{i,n}|1\leq i\leq n \}_{n\in\N}$. 
For this purpose, recall the basic definitions  on physical dependence measures stated before Assumption~\ref{assump:error}.
A  triangular array $\{ \eps_{i,n}|1\leq i\leq n \}_{n\in\N}$ of random variables is called \textit{locally stationary} if there exists a possibly non-linear filter $G:[0,1] \times \R^\N \to \R$ which is continuous in its first argument such that $\eps_{i,n}=G(i/n,\Fc_i)$ for all $i=1,\ldots,n$ and $n\in\N$.  The physical dependence measure defined in \eqref{eq:pdm} may be extended to a filter $G=G(\cdot,\cdot)$ with $\sup_{t\in[0,1]}\|G(t,\Fc_0)\|_{q,\Omega}<\infty$ by
\[ \delta_q(G,i)=\sup_{t\in[0,1]}\|G(t,\Fc_i)-G(t,\Fc_i^*)\|_{q,\Omega}, \quad i \in \N. \]

\begin{assumption} \label{assump:errorLS}
	The triangular array $\{(\eps_{i,n})_{1\leq i\leq n}\}_{n\in\N}$ in model \eqref{modneu}  is centered and locally stationary with a filter function $G$ that satisfies $\sup_{t\in[0,1]}\|G(t,\Fc_0)\|_{4,\Omega}<\infty$. Moreover, the following conditions are met:
	\begin{compactenum}[(1)]
		\item There is a constant $\chi\in(0,1)$ such that $\delta_4(G,i)=\Oc(\chi^i)$, as $i\to\infty$.
		\item The filter $G$ is Lipschitz continuous with respect to $\|\cdot\|_{4,\Omega}$,  that is  
		\[ \sup_{0\leq s<t\leq 1}\|G(t,\Fc_0)-G(s,\Fc_0)\|_{4,\Omega}/|t-s|<\infty. \]
		\item The \textit{long-run variance function} of $\{(\eps_{i,n})_{1\leq i\leq n}\}_{n\in\N}$, defined as 
		\[ \sigma^2(t)=\sum_{i=-\infty}^{\infty}\cov\big(G(t,\Fc_i),G(t,\Fc_0)\big) , \qquad t \in [0,1], \]
		is Lipschitz continuous and bounded away from zero, i.e., $\sigma_{\min}^2:=\inf_{t\in[0,1]}\sigma^2(t)>0$.
	\end{compactenum}
\end{assumption} 

As  $ \sigma^2(t)$ can be interpreted as  the (asymptotic) variance of a local mean of the data in a neighbourhood of  the point $t \in [0,1]$, it is reasonable to compare deviations relative to this local  noise. 
More precisely, consider a real-valued functional $g(\mu,\sigma)$ depending on the mean  $\mu$ and on the square root of the  long-run variance function $\sigma=\sqrt{\sigma^2}$, and define the distance
 \[
 d_\infty^\sigma:= \sup_{t\in[x_0,x_1]}\big|\mu(t)/\sigma(t)-g(\mu,\sigma) \big|. 
 \]
 We are interested in the hypotheses
\begin{equation}\label{h0_LS}
H_0: d_\infty^\sigma\leq \Delta \quad \text{   vs. } \quad H_1:d_\infty^\sigma >\Delta,
\end{equation} 
for some $\Delta \geq 0$. As in the stationary case, we need some regularity of the function 
\[
d^\sigma(t) = \mu (t)/\sigma(t) - g(\mu,\sigma), \qquad t \in [0,1],
\] 
and a suitable estimator for $g(\mu,\sigma)$.
Roughly speaking, the next two assumptions correspond to the assumptions in Section \ref{sec2}, where the function $\mu$ is replaced by $\mu/\sigma$.

\begin{assumption}\label{assump:ghatG}
	$\hat{g}_n$ is an estimator of $g(\mu,\sigma)$ such that $$|\hat{g}_n-g(\mu,\sigma)|=o_\pr\Big(\tfrac{1}{\sqrt{nh_n| \log(h_n)|}}\Big).$$
\end{assumption}

\begin{assumption}\label{assump:muLS}
~
\begin{compactenum}[(i)] 
\item
The functions $\mu$ and $\sigma$ are  twice differentiable with Lipschitz continuous second derivatives. 

	\item Define $\Ec^\sigma=\Ec^{+}\cup \Ec^-$, where
	\[ \Ec^{\pm} = \{t\in[x_0,x_1]: \pm d^\sigma(t)= \|d^\sigma\|_\infty \}. \]
	There exists a constant $\gamma>0$ such that $d^\sigma = \mu/\sigma - g(\mu,\sigma) $ is concave  (convex) on $U_\gamma(t) := \{s \in [0,1] \colon |s-t|< \gamma \} $, for any $t\in\Ec^+$  ($t\in\Ec^-$).
	\end{compactenum}
\end{assumption}

Further, we will need an estimator of the (time-dependent) long-run variance. For this purpose we follow \cite{dettewu2019} and define the partial sums $S_{j,k}=\sum_{i=j}^{k} X_{i,n}$ and the weight function 
$
\omega_{\tau_n}(t,j)=K_{\tau_n}\big(j/n-t\big)/\{\sum_{i=1}^{n}K_{\tau_n}\big(i/n-t\big)\},
$
for some positive bandwidth sequence $\tau_n=o(1)$. For some integer sequence $(m_n)_{n\in\N}$ with  $m_n \to \infty$ and $m_n \ll n$, define
\begin{equation}\label{eq:defLocLRVest} 
\hat{\sigma}^2(t)= \hat{\sigma}_{\tau_n, m_n}^2(t)=\sum_{j=1}^{n}\omega_{\tau_n}(t,j)\frac{(S_{j-m_n+1,j}-S_{j+1,j+m_n})^2}{2m_n}, 
\end{equation}
for $t\in[m_n/n, 1-m_n/n]$.
Extend this estimator to the whole interval $[0,1]$ by the definition $\hat{\sigma}^2(t)=\hat{\sigma}^2(m_n/n)$, for $t\in[0,m_n/n)$, and $\hat{\sigma}^2(t)=\hat{\sigma}^2(1-m_n/n)$, for $t\in(1-m_n/n,1]$. The following result specifies the convergence rate of $\hat\sigma^2$, and is proved in the Appendix.

\begin{theorem}\label{locstat}
	Let Assumption \ref{assump:errorLS} and \ref{assump:muLS}(i) be satisfied, and  assume that the smoothing parameter sequences $\tau_n>0$ and $m_n\in\N$ satisfy 
\[
\tau_n\to 0, \quad 
m_n\to \infty, \quad 
\tfrac{m_n^{1/4}}{\sqrt{n}\tau_n}\to 0, \quad
\frac{m_n^{5/2}}{n}\to 0.
\] 
Then,  with $\gamma_n=\tau_n+m_n/n$,
	\begin{equation*}
	\sup_{t\in[\gamma_n,1-\gamma_n]}|\hat{\sigma}^2(t)-\sigma^2(t)|=\Oc_\pr\Big(\frac{m_n^{1/4}}{\sqrt{n}\tau_n}+\frac{1}{m_n}+\tau_n^2+\frac{m_n^{5/2}}{n}\Big) = o_\pr(1).
	\end{equation*}
\end{theorem}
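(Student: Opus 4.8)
The plan is to bound the estimation error by a bias term and a stochastic term, uniformly in $t$, and to control the stochastic term by a blocking/chaining argument adapted to physical dependence. First I would write $\hat\sigma^2(t)-\sigma^2(t) = \{\hat\sigma^2(t) - \E[\hat\sigma^2(t)]\} + \{\E[\hat\sigma^2(t)] - \sigma^2(t)\}$ and treat the two pieces separately. For the deterministic bias $\E[\hat\sigma^2(t)] - \sigma^2(t)$, I would expand $S_{j-m_n+1,j}-S_{j+1,j+m_n} = \sum_{i} \pm\mu(i/n) + \sum_i \pm \eps_{i,n}$; the deterministic part contributes an error of order $m_n/n$ coming from the increments of the twice-differentiable $\mu$ over a window of length $m_n$ (using Assumption \ref{assump:muLS}(i)), squared and divided by $2m_n$, hence of order $m_n^3/n^2$, which is dominated. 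The genuinely stochastic contribution to the mean, $\E[(\sum_i \pm\eps_{i,n})^2]/(2m_n)$, equals $\sigma^2(t) + \Oc(1/m_n + \tau_n + m_n/n)$ uniformly in $t$: the $1/m_n$ is the standard truncation error for the long-run variance from a block of length $m_n$ (using the geometric decay $\delta_4(G,i)=\Oc(\chi^i)$ from Assumption \ref{assump:errorLS}(1)), the $\tau_n$ term comes from the Lipschitz continuity of $\sigma^2(\cdot)$ (Assumption \ref{assump:errorLS}(3)) together with the Lipschitz property of the filter (Assumption \ref{assump:errorLS}(2)) over the effective averaging window of the kernel weights $\omega_{\tau_n}$, and the $m_n/n$ term again reflects local stationarity over the block.

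For the stochastic fluctuation $\hat\sigma^2(t) - \E[\hat\sigma^2(t)]$, I would first replace $\eps_{i,n}=G(i/n,\Fc_i)$ by the stationary approximation $G(t,\Fc_i)$ within each kernel neighborhood, the replacement error being controlled by Assumption \ref{assump:errorLS}(2) and contributing lower-order terms. The leading object is then $\sum_j \omega_{\tau_n}(t,j) \big\{ Y_j(t) - \E Y_j(t)\big\}$ with $Y_j(t)=(S_{j-m_n+1,j}-S_{j+1,j+m_n})^2/(2m_n)$; these $Y_j$ have bounded fourth moments uniformly (again by the moment assumption and geometric decay), and the $\omega_{\tau_n}(t,j)$ are a kernel average over roughly $n\tau_n$ indices, so the pointwise standard deviation is of order $1/\sqrt{n\tau_n}$ up to block-length corrections $m_n^{1/4}$ entering through the fourth-moment bound of a sum of $m_n$ weakly dependent terms; this produces the rate $m_n^{1/4}/(\sqrt{n}\tau_n)$. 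To upgrade pointwise control to uniform control over $t\in[\gamma_n,1-\gamma_n]$, I would use a standard discretization of the interval into $\Oc(n^{c})$ points together with a Lipschitz bound of $t\mapsto \hat\sigma^2(t)$ in $t$ (the kernel $K$ is twice differentiable by Assumption \ref{assump:kernel}, so $\omega_{\tau_n}(t,j)$ is differentiable in $t$ with derivative of order $1/(n\tau_n^2)$), and then a maximal inequality for the finitely many grid points, e.g. via a moment bound of sufficiently high order or a Freedman/Rosenthal-type inequality for sums of weakly dependent bounded-moment variables. This is essentially the argument in \cite{dettewu2019}, whose proof structure I would follow closely.

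The main obstacle is the uniform-in-$t$ control of the stochastic term with the sharp rate, since the summands $Y_j(t)$ are themselves quadratic functionals of a dependent sequence over a growing block of length $m_n$, so one cannot simply invoke an off-the-shelf empirical-process bound; one needs the physical-dependence machinery (e.g. $m$-dependent approximation of $G(t,\Fc_i)$, or an $L^2$-bound on $\delta_q$ of the induced process) to get the correct $m_n^{1/4}$ dependence and to verify that the discretization error is negligible at the scale $m_n^{1/4}/(\sqrt n\tau_n)$. The bias analysis, by contrast, is routine Taylor expansion plus geometric-decay bookkeeping, and the side conditions $m_n^{1/4}/(\sqrt n\tau_n)\to0$, $m_n^{5/2}/n\to0$ are exactly what make all the error terms vanish.
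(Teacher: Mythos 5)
Your overall strategy---bias/fluctuation decomposition, fourth-moment control of the squared block sums, discretization for uniformity---is exactly the route the paper takes, in the sense that the paper's proof is nothing more than a pointer to the proof of Theorem~4.4 of \cite{dettewu2019}, whose structure you say you would follow. Two points, however, deserve attention. First, your bias analysis as written yields only $\Oc(\tau_n)$ from the Lipschitz continuity of $\sigma^2$, whereas the theorem asserts $\tau_n^2$; since $\tau_n$ is not dominated by the other terms in the stated rate, this would not prove the theorem as formulated. The fix is standard but must be invoked: Assumption~\ref{assump:muLS}(i) gives $\sigma$ (hence $\sigma^2$) twice differentiable with Lipschitz second derivative, and the symmetry of $K$ kills the first-order term in the Taylor expansion of the kernel-weighted average, leaving $\Oc(\tau_n^2)$.

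Second, the one idea that constitutes the paper's entire proof is precisely the step you defer as ``the main obstacle'': obtaining the factor $m_n^{1/4}$ rather than $m_n^{1/2}$ in the stochastic term. The paper resolves this by observing that the generalized Burkholder inequality of Fan et al.\ (equation (1.1) in \cite{Fan2003}) makes the moment inequalities in Theorem~1 of \cite{Wu2007} valid for $q$ itself rather than $q'=\min(2,q)$; under Assumption~\ref{assump:errorLS} one may take $q=4$, which improves the rate in Lemma~3 of \cite{ZhouWu2010} to $\Oc\big(m_n^{1/4}(n\tau_n)^{-1/2}\big)$ and hence sharpens Theorem~4.4 of \cite{dettewu2019} to the stated rate. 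Your proposal gestures at ``physical-dependence machinery'' and $m$-dependent approximation without identifying this lever, so as it stands the sharp rate $m_n^{1/4}/(\sqrt{n}\tau_n)$ is asserted rather than derived; with that reference supplied, the argument closes.
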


Based on this estimator, similar results as stated in Sections \ref{sec3} and \ref{sec4} can be derived. To be precise, define 
\[
d_{\infty, n}^{\sigma} = \textstyle\sup_{t \in I_n}|{d}^\sigma(t)|, 
\qquad
\hat d_{\infty, n}^{\sigma} = \textstyle\sup_{t \in I_n}|\hat{d}_n^\sigma(t)|, 
\]
where $\hat{d}_n^\sigma(t) = \tilde{\mu}_{h_n}(t)/\hat{\sigma}(t)-\hat g_n$.
Moreover, for some positive sequence $(\rho_n)_{n\in\N}=o(1)$, let ${\Ec}^\sigma_n = {\Ec}_n^+ \cup {\Ec}_n^-$  and $\hat{\Ec}^\sigma_n = \hat{\Ec}_n^+ \cup \hat{\Ec}_n^-$, where
\[ 
{\Ec}_n^\pm = \{ t\in I_n:  d_{\infty, n}^{\sigma}  \mp {d}_n^\sigma(t)\leq \rho_n \}, 
\qquad
\hat{\Ec}_n^\pm = \{ t\in I_n:  \hat d_{\infty, n}^{\sigma}  \mp \hat{d}_n^\sigma(t)\leq \rho_n \}. 
\]
Let $G^\sigma_{n,j}$ be defined as in \eqref{eq:Gn} ($j=1$) and \eqref{eq:Gn2} $(j=2)$, but with $\Ec_n$ replaced by $\Ec_n^\sigma$.

\begin{theorem}\label{thm:convLS} 
		Suppose that Assumptions \ref{assump:kernel}, \ref{assump:errorLS}, \ref{assump:ghatG} and \ref{assump:muLS} hold, and  assume that $h_n\to 0$, $\tau_n\to 0$ and $m_n\to\infty$, such that $nh_n\to\infty$ and 
		\begin{align*}
		&|\log(h_n)|\tfrac{\log^{4}n}{n^{1/2}h_n}\leq C<\infty,  \\
		&|\log(h_n)| n h_n^{7}\to 0,\\
		&|\log(h_n)|^{1/2} \Big(\tfrac{m_n^{1/4}\sqrt{h_n}}{\tau_n}+\tfrac{\sqrt{nh_n}}{m_n}+\sqrt{nh_n}\tau_n^2+\sqrt{\tfrac{h_nm_n^{5}}{n}}\Big)\to 0,
		\end{align*}
		Moreover, assume that $\rho_n\to 0$ and $\rho_n^2 nh_n / |\log (h_n)|\to\infty$, and if $\lambda(\Ec^\sigma)=0$, assume additionally $\rho_n^{\scs 1/2}h_n^{\scs-1}\to \infty$. 
		If $d_\infty^\sigma>0$, then 
\[  
\pr\Big(\tfrac{\ell_n(\hat \Ec_n^\sigma)\sqrt{nh_n}}{\|K^*\|_2} \big( \hat d_{\infty, n}^\sigma - d_{\infty, n}^\sigma\big)-\ell_n^2(\hat \Ec_n^\sigma)\leq x\Big)
\geq 
\pr(G_{n,1}^\sigma \leq x)+o(1), 
\]
with equality if $\lambda(\Ec^\sigma)>0$.  If $d_\infty^\sigma=0$, then
\[  
\pr\Big(\tfrac{\ell_n(\hat \Ec_n^\sigma)\sqrt{nh_n}}{\sigma\|K^*\|_2} \big( \hat d_{\infty, n} - d_{\infty, n}\big)-\ell_n^2(\hat \Ec_n^\sigma)\leq x\Big) 
= 
\pr(G_{n,2}^\sigma\leq x)+o(1).
\]	
Moreover,
$
		G_{n,1}^\sigma \convw \Gum_0$ and $G_{n,2}^\sigma \convw \Gum_{\log(2)}.
$
\end{theorem}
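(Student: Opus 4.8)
The plan is to reduce Theorem~\ref{thm:convLS} to the stationary results of Section~\ref{sec3} and Section~\ref{sec4}, the guiding idea being that dividing the estimator by $\hat\sigma(t)$ asymptotically removes the heteroscedasticity of the error field, while every additional error term produced by the plug-in estimators $\hat\sigma(\cdot)$, $\hat g_n$ and by the bias of the local-linear estimator is negligible at the relevant scale $1/(\sqrt{nh_n}\,\ell_n(\hat\Ec_n^\sigma))$. I would start from the decomposition, valid for $t\in I_n$,
\[
\hat d_n^\sigma(t)-d^\sigma(t) = \frac{\tilde\mu_{h_n}(t)-\mu(t)}{\sigma(t)} + \tilde\mu_{h_n}(t)\Big(\frac1{\hat\sigma(t)}-\frac1{\sigma(t)}\Big) - \big(\hat g_n-g(\mu,\sigma)\big),
\]
and further split $\tilde\mu_{h_n}(t)-\mu(t) = \beta_n(t)+\xi_n(t)$ into the deterministic bias $\beta_n(t)=\Exp[\tilde\mu_{h_n}(t)]-\mu(t)$ and the centered stochastic part $\xi_n(t)$.

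The three ``nuisance'' contributions are then controlled one by one. By the Jackknife construction \eqref{ejackest} and Assumption~\ref{assump:muLS}(i) one has $\sup_{t\in I_n}|\beta_n(t)|=\Oc(h_n^3)+\Oc((nh_n)^{-1})$, so that $\sqrt{nh_n}\,\ell_n\sup_{t\in I_n}|\beta_n(t)|/\sigma_{\min}=\Oc(|\log h_n|^{1/2}\sqrt{nh_n^7})=o(1)$ by the condition $|\log(h_n)|\,nh_n^7\to0$. Assumption~\ref{assump:ghatG} gives $\sqrt{nh_n}\,\ell_n\,|\hat g_n-g(\mu,\sigma)|=o_\pr(\ell_n/|\log h_n|^{1/2})=o_\pr(1)$. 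For the variance plug-in, expressing $1/\hat\sigma(t)-1/\sigma(t)$ in terms of $\hat\sigma^2(t)-\sigma^2(t)$ and using $\sigma_{\min}>0$ together with Theorem~\ref{locstat} (note that $I_n\subseteq[\gamma_n,1-\gamma_n]$ for $n$ large) and $\sup_{t\in I_n}|\tilde\mu_{h_n}(t)|=\Oc_\pr(1)$, the middle term is uniformly of order $r_n:=m_n^{1/4}/(\sqrt n\tau_n)+1/m_n+\tau_n^2+m_n^{5/2}/n$ in probability; the third rate condition of the theorem is precisely $|\log(h_n)|^{1/2}\sqrt{nh_n}\,r_n\to0$, so this term is $o_\pr(1/(\sqrt{nh_n}\,\ell_n))$ as well. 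Hence, uniformly over $t\in I_n$ (and in particular over $\Ec_n^\sigma$ and $\hat\Ec_n^\sigma$),
\[
\hat d_n^\sigma(t)-d^\sigma(t)=\frac{\xi_n(t)}{\sigma(t)}+o_\pr\Big(\frac1{\sqrt{nh_n}\,\ell_n}\Big).
\]

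The heart of the argument is the Gaussian approximation of $\xi_n(t)/\sigma(t)$. Standard local-linear algebra combined with the Jackknife weights shows that, uniformly in $t\in I_n$, $\xi_n(t)$ equals $(nh_n)^{-1}\sum_{i=1}^{n}\eps_{i,n}K_{h_n}^*(\tfrac in-t)$ up to a term negligible at the scale $1/(\sqrt{nh_n}\,\ell_n)$, with $K^*$ as in \eqref{eq:defassump:kernel}. Invoking the strong approximation for locally stationary physical-dependence processes developed in Section~\ref{sec8} — the same tool underlying the confidence-band by-product of Remark~\ref{rem:test0}, valid under Assumption~\ref{assump:errorLS} and the rate condition $|\log(h_n)|\log^4 n/(n^{1/2}h_n)\le C$ — one may construct i.i.d.\ standard normals $(V_i)_{i\in\N}$ such that $(nh_n)^{-1}\sum_i\eps_{i,n}K_{h_n}^*(\tfrac in-t)$ is uniformly close, on the scale $1/(\sqrt{nh_n}\,\ell_n)$, to $(nh_n)^{-1}\sum_i\sigma(\tfrac in)V_iK_{h_n}^*(\tfrac in-t)$. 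Since $K_{h_n}^*(\cdot-t)$ is supported on $[t-h_n,t+h_n]$ and $\sigma$ is Lipschitz, $\sigma(\tfrac in)=\sigma(t)+\Oc(h_n)$ there, and the resulting error, after multiplication by $\sqrt{nh_n}\,\ell_n/\sigma(t)$, is $\Oc_\pr(h_n\ell_n)=o_\pr(1)$. Therefore, uniformly in $t\in I_n$,
\[
\frac{\xi_n(t)}{\sigma(t)}=\frac1{nh_n}\sum_{i=1}^{n}V_iK_{h_n}^*\big(\tfrac in-t\big)+o_\pr\Big(\frac1{\sqrt{nh_n}\,\ell_n}\Big),
\]
i.e.\ the $\sigma(t)$-normalization cancels the heteroscedasticity and the right-hand side is exactly the process appearing inside $G_{n,1}^\sigma$ and $G_{n,2}^\sigma$ (cf.\ \eqref{eq:Gn}, \eqref{eq:Gn2} with $\Ec_n$ replaced by $\Ec_n^\sigma$).

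From here I would run the argument in parallel with the proofs of Theorems~\ref{thm:EnEstimator} and \ref{conv:DnEn}. First, $\lambda(\Ec_n^\sigma)/\lambda(\hat\Ec_n^\sigma)\convp1$ follows exactly as in Theorem~\ref{thm:EnEstimator} (the displayed uniform approximation of $\hat d_n^\sigma-d^\sigma$ being the only input, under the same $\rho_n$-conditions, including $\rho_n^{1/2}h_n^{-1}\to\infty$ when $\lambda(\Ec^\sigma)=0$), which permits replacing $\ell_n(\Ec_n^\sigma)$ by $\ell_n(\hat\Ec_n^\sigma)$ since then $\ell_n(\hat\Ec_n^\sigma)/\ell_n(\Ec_n^\sigma)\convp1$ and $\ell_n^2(\hat\Ec_n^\sigma)-\ell_n^2(\Ec_n^\sigma)\convp0$. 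When $d_\infty^\sigma>0$, Assumption~\ref{assump:muLS}(ii) forces $d^\sigma$ to have a fixed sign near each extremal point, so on $\Ec_n^\sigma$ the modulus in $\hat d_{\infty,n}^\sigma=\sup_{t\in I_n}|\hat d_n^\sigma(t)|$ is resolved with the correct sign, producing the one-sided supremum and the limit $\Gum_0$; when $\lambda(\Ec^\sigma)=0$ the localization of the supremum to $\Ec_n^\sigma$ only yields an upper bound (boundary pieces of $I_n$ must be accommodated), which is why one gets ``$\geq$'' rather than ``$=$'', just as in Theorem~\ref{conv:DnEn}. When $d_\infty^\sigma=0$ one has $\Ec_n^\sigma=I_n$, both signs contribute, and the two-sided supremum yields $\Gum_{\log2}$; replacing $\hat\sigma$ by $\sigma$ where needed is justified by \eqref{eq:estLRV}-type considerations via Theorem~\ref{locstat}. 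The final convergences $G_{n,1}^\sigma\convw\Gum_0$ and $G_{n,2}^\sigma\convw\Gum_{\log2}$ are precisely Theorem~\ref{conv:DnEn} read with $\Ec_n$ replaced by $\Ec_n^\sigma$. The main obstacle is the Gaussian approximation step: establishing, at the fine uniform scale $o_\pr(1/(\sqrt{nh_n}\,\ell_n))$, both the coupling of the dependent, non-stationary error convolution with a Gaussian one and the fact that the approximating Gaussian field carries the local long-run variance $\sigma^2(t)$, so that division by $\sigma(t)$ genuinely homogenizes it; everything else is careful but routine bookkeeping, the intricate $(\tau_n,m_n)$-condition existing solely to absorb the plug-in error of $\hat\sigma^2(\cdot)$ from Theorem~\ref{locstat}.
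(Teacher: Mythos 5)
Your proposal is correct and follows essentially the same route as the paper: the paper reduces Theorem~\ref{thm:convLS} to the arguments of Section~\ref{proofsec4}, substituting Theorem~\ref{locstat} for \eqref{eq:estLRV} and Theorem~\ref{a1} for Remark~\ref{stationary}, and Theorem~\ref{a1} is precisely your chain $Z_n \to Z_{n,1} \to Z_{n,2} \to Z_{n,3}$ (plug-in of $\hat\sigma$, Gaussian coupling, then homogenization by $\sigma(t)$). The only point to tighten is the homogenization error: the pointwise bound $\sigma(i/n)=\sigma(t)+\Oc(h_n)$ must be upgraded to a uniform-in-$t$ bound on the Gaussian difference process via a chaining/maximal-inequality argument (the paper obtains $\Oc_\pr(\sqrt{h_n})$ for the supremum), but since any of these rates times $\ell_n$ is $o(1)$, your conclusion stands.
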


Tests for the hypothesis \eqref{h0_LS} can be derived in a similar way as in Section~\ref{sec4}. Exemplary, we consider the analogue of test \eqref{test:basedOnEstimator} (a test based on using the representation of $G^\sigma_{n,j}$ can be derived similarly). The null hypothesis in \eqref{h0_LS} is rejected whenever
\begin{equation} 
\label{test:basedOnEstimatorLS} 
\sup_{t\in I_n}|\hat{d}_n^\sigma(t)| > \{ q_{a,1-\alpha}+\ell_n^2(\hat\Ec_n^\sigma)\}\frac{\|K^*\|_2}{\sqrt{nh_n}\ell_n(\hat\Ec_n^\sigma)}+\Delta, 
\end{equation}
where $a=0$ if $\Delta>0$ and $a=\log(2)$ if $\Delta=0$.

\begin{corollary}  \label{test3}
	Under the assumptions of Theorem \ref{thm:convLS}, the test defined by the decision rule \eqref{test:basedOnEstimatorLS} is consistent and has asymptotic level $\alpha$. 
\end{corollary}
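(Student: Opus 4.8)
The plan is to mirror the proof of Corollary~\ref{test2}, replacing the stationary ingredients by their locally stationary counterparts: Theorem~\ref{conv:DnEn} is replaced by Theorem~\ref{thm:convLS}, the statistic $\hat d_{\infty,n}$ by $\hat d_{\infty,n}^\sigma$, the set $\hat\Ec_n$ by $\hat\Ec_n^\sigma$, and the scaling $\ell_n(\hat\Ec_n)$ by $\ell_n(\hat\Ec_n^\sigma)$. A simplification relative to Corollary~\ref{test2} is that no separate long-run-variance estimate enters the decision rule~\eqref{test:basedOnEstimatorLS}: the studentization is already built into $\hat d_n^\sigma(t)=\tilde\mu_{h_n}(t)/\hat\sigma(t)-\hat g_n$ (via Theorem~\ref{locstat}) and is absorbed into the normalization in Theorem~\ref{thm:convLS}, so there is no $\hat\sigma/\sigma$-correction left to control.

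For the asymptotic level I would distinguish $\Delta>0$ and $\Delta=0$. Observe first that the right-hand side of~\eqref{test:basedOnEstimatorLS} equals $\Delta+o_\pr(1)$, since $\ell_n^2(\hat\Ec_n^\sigma)\|K^*\|_2/(\sqrt{nh_n}\,\ell_n(\hat\Ec_n^\sigma))=\ell_n(\hat\Ec_n^\sigma)\|K^*\|_2/\sqrt{nh_n}\le \ell_n\|K^*\|_2/\sqrt{nh_n}\to0$ (using that the bandwidth conditions imply $nh_n/|\log h_n|\to\infty$), while $q_{a,1-\alpha}\|K^*\|_2/(\sqrt{nh_n}\,\ell_n(\hat\Ec_n^\sigma))\to0$ because $\ell_n(\hat\Ec_n^\sigma)$ is bounded away from $0$ in probability — a consequence of the locally stationary analogue of Theorem~\ref{thm:EnEstimator} together with the assumption $\rho_n^{1/2}h_n^{-1}\to\infty$ in the case $\lambda(\Ec^\sigma)=0$. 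When $\Delta>0$, split $H_0:d_\infty^\sigma\le\Delta$ into the interior $d_\infty^\sigma<\Delta$ and the boundary $d_\infty^\sigma=\Delta$. On the interior, $\hat d_{\infty,n}^\sigma\pto d_\infty^\sigma<\Delta$ (because $\hat d_{\infty,n}^\sigma-d_{\infty,n}^\sigma=\Oc_\pr(\ell_n/\sqrt{nh_n})=o_\pr(1)$, a by-product of Theorem~\ref{thm:convLS}, and $d_{\infty,n}^\sigma\to d_\infty^\sigma$ by continuity of $d^\sigma$), so the rejection probability tends to $0$. On the boundary $d_\infty^\sigma=\Delta>0$ one uses $d_{\infty,n}^\sigma=\sup_{I_n}|d^\sigma|\le\sup_{[x_0,x_1]}|d^\sigma|=\Delta$, hence $\hat d_{\infty,n}^\sigma-\Delta\le\hat d_{\infty,n}^\sigma-d_{\infty,n}^\sigma$, so the rejection probability is bounded by
\[
\pr\Big(\tfrac{\ell_n(\hat\Ec_n^\sigma)\sqrt{nh_n}}{\|K^*\|_2}(\hat d_{\infty,n}^\sigma-d_{\infty,n}^\sigma)-\ell_n^2(\hat\Ec_n^\sigma)>q_{0,1-\alpha}\Big),
\]
which by the first part of Theorem~\ref{thm:convLS} is at most $\pr(G_{n,1}^\sigma>q_{0,1-\alpha})+o(1)\to\pr(\Gum_0>q_{0,1-\alpha})=\alpha$, using $G_{n,1}^\sigma\convw\Gum_0$ and continuity of the Gumbel c.d.f.; when $\lambda(\Ec^\sigma)>0$ the bound is an equality, giving exactly level $\alpha$. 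When $\Delta=0$, $H_0$ forces $d^\sigma\equiv0$, hence $d_{\infty,n}^\sigma=0$, and the rejection probability is exactly $\pr\big(\tfrac{\ell_n(\hat\Ec_n^\sigma)\sqrt{nh_n}}{\|K^*\|_2}\hat d_{\infty,n}^\sigma-\ell_n^2(\hat\Ec_n^\sigma)>q_{\log2,1-\alpha}\big)$, which by the second part of Theorem~\ref{thm:convLS} and $G_{n,2}^\sigma\convw\Gum_{\log2}$ converges to $\pr(\Gum_{\log2}>q_{\log2,1-\alpha})=\alpha$.

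For consistency, assume $d_\infty^\sigma>\Delta$. As above the right-hand side of~\eqref{test:basedOnEstimatorLS} equals $\Delta+o_\pr(1)$, whereas $\hat d_{\infty,n}^\sigma\pto d_\infty^\sigma>\Delta$, so $\pr(\text{reject})\to1$. The genuinely hard analytical content — the Gaussian approximation of $\hat d_n^\sigma$ uniformly over the shrinking set $\Ec_n^\sigma$, the extreme-value limit on that set, and consistency of $\hat\Ec_n^\sigma$ for $\Ec_n^\sigma$ in both regimes $\lambda(\Ec^\sigma)>0$ and $\lambda(\Ec^\sigma)=0$ — is entirely encapsulated in Theorem~\ref{thm:convLS} (proved in the appendix via Theorem~\ref{locstat} and the locally stationary version of Theorem~\ref{thm:EnEstimator}). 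Thus the only obstacle remaining inside the corollary is the bookkeeping of the interior/boundary dichotomy of $H_0$ for $\Delta>0$ and the verification that the data-dependent scaling $\ell_n(\hat\Ec_n^\sigma)$ is squeezed at the rate $\sqrt{|\log h_n|}$, ensuring that the stochastic part of the critical value is asymptotically negligible.
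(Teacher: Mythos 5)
Your proposal is correct and follows essentially the same route as the paper, which gives no separate argument for Corollary~\ref{test3} but transfers the proofs of Corollaries~\ref{FloZirkus} and~\ref{test2} verbatim, with Theorem~\ref{thm:convLS} in place of Theorem~\ref{conv:DnEn} and with the studentization already absorbed into $\hat d_n^\sigma$, exactly as you observe. One attribution to tighten: when $\lambda(\Ec^\sigma)=0$ and $d_\infty^\sigma>0$, Theorem~\ref{thm:convLS} yields only one-sided stochastic domination, so the two-sided statement $|\hat d_{\infty,n}^\sigma-d_{\infty,n}^\sigma|=o_\Prob(1)$ (which you need both on the interior of $H_0$ and for consistency under the alternative) should be obtained from the reverse triangle inequality and the uniform Gaussian approximation, as in \eqref{eq:axel4}, rather than as a ``by-product'' of the weak convergence itself.
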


\section{Finite sample results}
\label{sec7}

We investigate the finite sample properties of the new methodology by means of a simulation study and illustrate its application in a data example.

\subsection{Monte Carlo simulation study} \label{sec71}

A large scale Monte Carlo simulation study was performed to analyse the finite-sample properties of the proposed tests. Two classes of mean functions $\mu$ were considered, the first one with $\lambda(\Ec)=0$ and the second one with $\lambda(\Ec)>0$.

The first class of models is based on the mean function
\[
\mu_a^{(1)}(x) = 10 + \tfrac{1}{2} \sin(8\pi x)+a\big(x-\tfrac{1}{4}\big)^2 \id\big(x>\tfrac{1}{4}\big),
\]
which is plotted in Figure \ref{fig:models} for various choices of $a$.  We considered the testing problem in \eqref{h0avl} with 
$x_0=1/4 $
 and with  $\Delta=1$, that is
\begin{equation} 
 \label{h0avlspec} 
H_0:     d_{\infty} =  \sup_{t \in [1/4,1] } \Big  | \mu_{a}^{(1)} (t) - 4\int_{0}^{1/4} \mu_{a}^{(1)} (s)  \diff s \Big |  \leq 1  \, \,  \, \,  \, \quad
\text{  vs.\ }  \quad H_1:  d_{\infty}   >  1.
\end{equation}
 Such a scenario might for instance be encountered and of interest in the context of climate change. Note that $\|\mu_{a^*}^{\scs (1)}-g(\mu_{a^*}^{\scs (1)})\|=\Delta$ for $a^*=\tfrac{128}{81}\approx 1.58$, whereas for $a<a^*$ and $a>a^*$ we have $\|\mu_{a}^{\scs (1)}-g(\mu_{a}^{\scs (1)})\|_\infty<\Delta$ and $\|\mu_{a}^{\scs (1)}-g(\mu_{a}^{\scs (1)})\|_\infty>\Delta$, respectively. 

The second class of models is based on the mean function
$$\mu^{(2)}(x) = \left\{\begin{array}{ll}
9&\quad \text{for}~x\le \tfrac{1}{4}\\
\tfrac{3}{2}\sin(2\pi x)+10.5&\quad \text{for}~\tfrac{1}{4}<x\le \tfrac{3}{4}\\
12&\quad \text{for}~\tfrac{3}{4}<x,
\end{array} \right. $$
again plotted in Figure \ref{fig:models}. For models involving $\mu^{(2)}$, we considered the testing problem in \eqref{h0all}
with 
$ x_0=0,$ $x_1=1$, $g(\mu)\equiv 10 $  and various choices of $\Delta > 0$, that is 
\begin{equation}
 \label{h0allspec} 
H_0:     d_{\infty} =  \sup_{t \in [0,1] } \Big  | \mu^{(2)} (t) - 10  \Big |  \leq \Delta  
\quad
\text{   vs.\  }   \quad H_1:  d_{\infty}   >   \Delta.
\end{equation}
 Such a setting might be encountered in quality control, where deviations from a target value might occur gradually due to wear and tear (and eventual failure) of a component of a complex system.
Note that  $\|\mu^{(2)}-g(\mu^{(2)})\|_\infty\le\Delta$ for $\Delta \ge 2$, whereas $\|\mu^{(2)}-g(\mu^{(2)})\|_\infty>\Delta$ for $\Delta < 2$.

For both choices of the mean function $\mu$ we chose  three different  error processes $(\eps_i)_{i\in\Z}$ in model \eqref{Lipschiss}, that is 
\begin{align*}
(\text{IID}) \quad &~ \eps_i= \tfrac{1}{2}\eta_i\\
(\text{MA)} \quad &~ \eps_i = \tfrac{1}{\sqrt{5}}\big( \eta_i + \tfrac{1}{2}\eta_{i-1}\big)\\
(\text{AR})  \quad &~ \eps_i = \tfrac{\sqrt{3}}{4} \big( \eta_i + \tfrac{1}{2}\eps_{i-1}\big),
\end{align*}
were $(\eta_i)_{i\in\Z}$ is an i.i.d.\ sequence of standard normally distributed random variables.
In particular, we have $\var(\eps_i)=\tfrac{1}{4}$ for all error processes under consideration.

\begin{figure}[tbp] 
	\centering \vspace{-.4cm}
	\mbox{\hspace{-.4cm}\includegraphics[width=69mm]{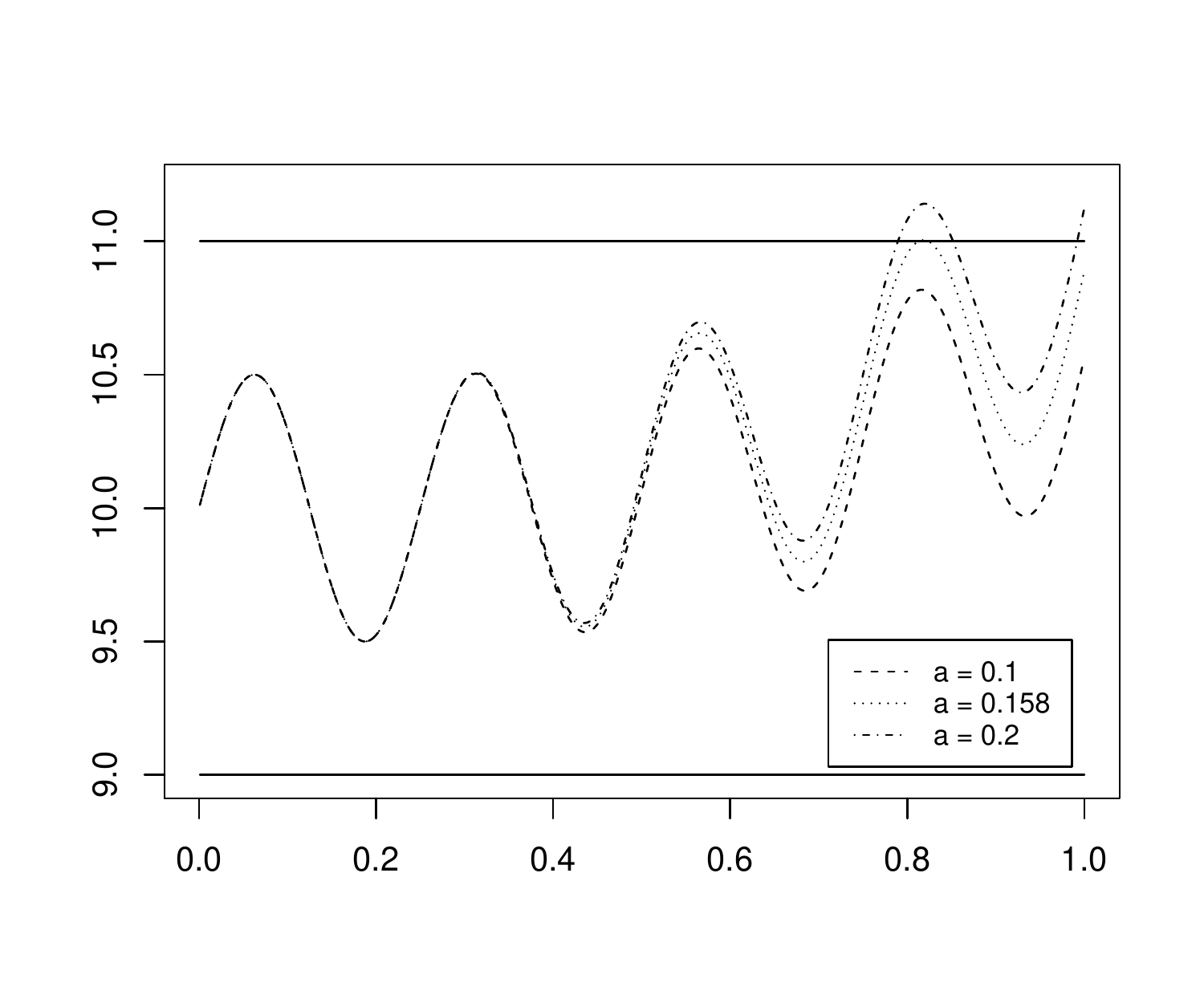} \hspace{-.5cm}
	\includegraphics[width=69mm]{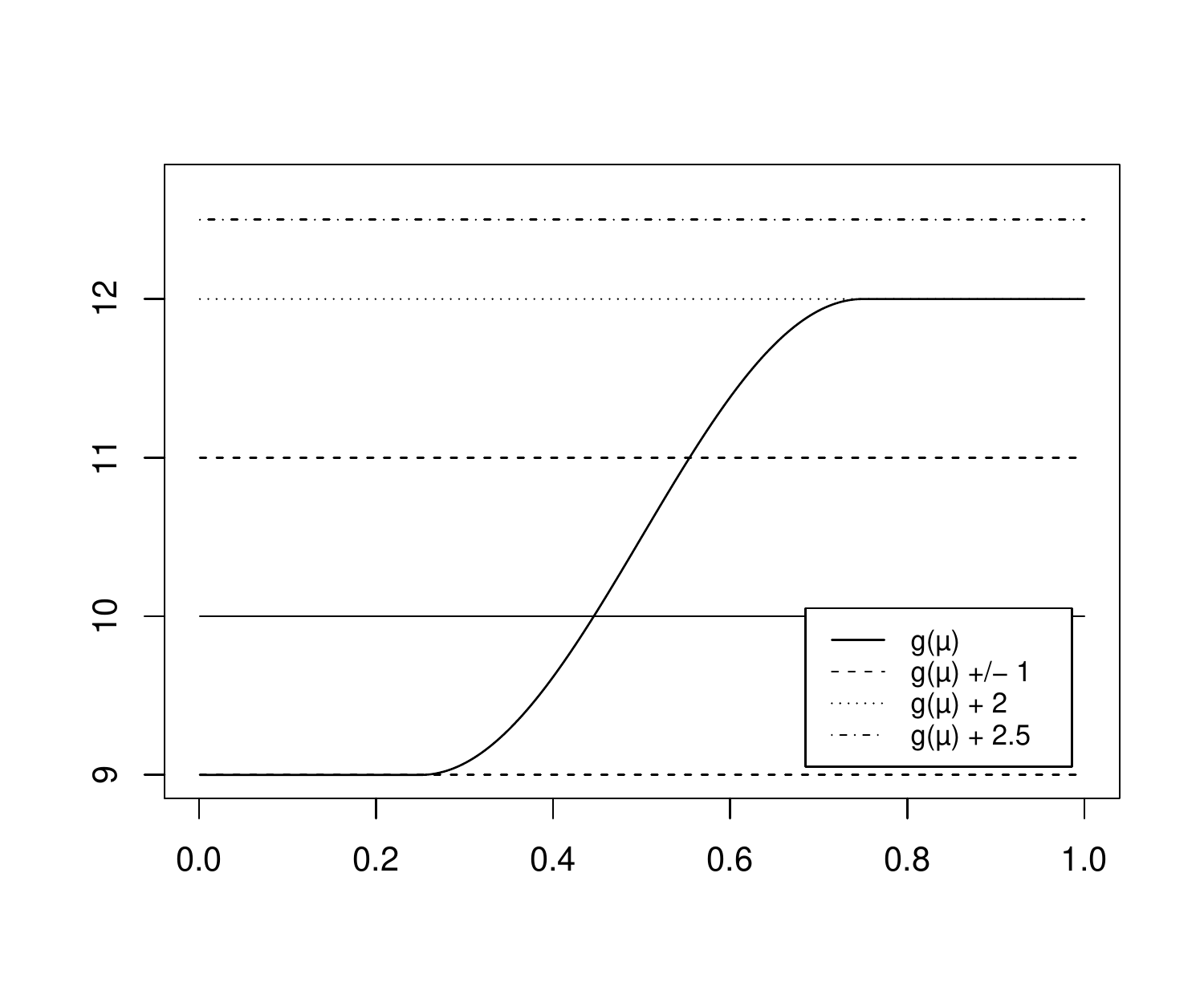}}\vspace{-.5cm}
	\caption{\textit{Left: The mean function $\mu_a^{\scs (1)}$ is plotted for three choices of $a$. Right: The mean function $\mu^{(2)}$ is plotted, alongside with $g+\Delta$ for five choices of the threshold $\Delta$.}} \vspace{-.2cm}
		\label{fig:models}
\end{figure}

The choice of the bandwidth $h_n$ for the estimator $\tilde{\mu}_{h_n}$ is crucial to avoid both overfitting and oversmoothing. For this purpose, we employ the following $k$-fold cross-validation procedure with $k=10$ (as recommended by \citealp{HasTibFri09}, page 242).

\begin{alg}[Cross-Validation for the Choice of $h_n$] \label{alg:hn}~
	\begin{compactenum}[1.]
		\item Split the observed data randomly in $k=10$ sets $S_1, \dots, S_{10}$ of equal length.
		
		\item For $h_n=\tfrac{1}{n}$ and each set $S_i$, calculate the Jackknife estimator $\tilde{\mu}_{h_n}^{(i)}$ based on the data in the remaining sets.
		
		\item Based on the Jackknife estimators $\tilde{\mu}_{h_n}^{(i)}$ from Step (2), compute the mean squared prediction error 
			\[
			\MSE_{h_n}=\frac{1}{1-h_n/2} \sum_{i=1}^{10}\sum_{j\in S_i} \big\{ X_{j,n}-\tilde{\mu}_{h_n}^{(i)}(j/n)\big\}^2.
			\]
		
		\item Repeat Steps (2) and (3) for the bandwidths $h_n=\tfrac{2}{n},\dots,\tfrac{\lfloor n/2 \rfloor}{n}$
		
		\item Choose the bandwidth $h_n$ that minimises the mean squared prediction error $\MSE_{h_n}$.	
	\end{compactenum}
\end{alg}

{\footnotesize
\begin{table}[t!]
			\begin{tabular}{l|r | rrr | rrr | rrr }		\hline \hline
		$\mu_a^{(1)}$&&\multicolumn{3}{c|}{test \eqref{test0} }&\multicolumn{3}{c|}{test \eqref{test1rel}} & \multicolumn{3}{c}{test \eqref{test:approxQuantiles}}\\
			$a$ &$d_\infty-\Delta$ & 200 & 500 & 1000 & 200 & 500 & 1000 & 200 & 500 & 1000 \\ 
		 \hline 
		\addlinespace[.2cm]
		\multicolumn{11}{l}{\quad\textit{Panel A: iid errors}} \\ 
		1.0 & -0.18 & 0.0 & 0.0 & 0.0 & 0.0 & 0.0 & 0.0 & 0.0 & 0.0 & 0.0 \\ 
		1.5 & -0.03& 0.0 & 0.0 & 0.0 & 0.0 & 0.0 & 0.0 & 0.0 & 0.0 & 0.0 \\ 
		\bf 1.58 & \bf 0.00 & \bf 0.0 & \bf 0.0 & \bf 0.0 & \bf 0.0 & \bf 0.0 & \bf 0.0 & \bf 0.0 & \bf 0.0 & \bf 0.2 \\ 
		2.0 & 0.13 & 0.0 & 0.1 & 1.1 & 0.0 & 0.2 & 2.4 & 0.0 & 3.3 & 23.1 \\ 
		2.5 & 0.29 & 0.0 & 5.6 & 66.9 & 0.0 & 9.0 & 77.6 & 0.0 & 29.9 & 97.8 \\ 
		3.0 & 0.45 & 0.0 & 34.8 & 99.8 & 0.0 & 39.9 & 99.9 & 0.2 & 57.3 & 100.0	 \\
	\addlinespace[.2cm]
		\multicolumn{11}{l}{\quad\textit{Panel B: MA errors}} \\ 
	1.0 & -0.18 & 0.0 & 0.0 & 0.0 & 0.0 & 0.0 & 0.0 & 0.0 & 0.0 & 0.0 \\ 
	1.5 & -0.03 & 0.0 & 0.0 & 0.0 & 0.0 & 0.0 & 0.0 & 0.0 & 0.0 & 0.0 \\ 
	\bf 1.58 & \bf 0.00 & \bf 0.0 & \bf 0.0 & \bf 0.0 & \bf 0.0 & \bf 0.0 & \bf 0.0 & \bf 0.0 & \bf 0.1 & \bf 0.3 \\ 
	2.0 & 0.13 & 0.0 & 0.3 & 0.9 & 0.0 & 0.5 & 2.1 & 0.0 & 3.7 & 18.7 \\ 
	2.5 & 0.29 & 0.1 & 4.6 & 40.2 & 0.1 & 7.4 & 51.7 & 0.2 & 27.0 & 87.9 \\ 
	3.0 & 0.45 & 0.1 & 25.4 & 96.1 & 0.1 & 31.0 & 97.9 & 0.5 & 52.8 & 99.7 \\ 
 \addlinespace[.2cm]
		\multicolumn{11}{l}{\quad\textit{Panel C: AR errors}} \\ 
	1.0 & -0.18 & 0.0 & 0.0 & 0.0 & 0.0 & 0.0 & 0.0 & 0.0 & 0.0 & 0.0 \\ 
	1.5 & -0.03 & 0.0 & 0.0 & 0.0 & 0.0 & 0.0 & 0.0 & 0.1 & 0.5 & 1.0 \\ 
	\bf 1.58 & \bf 0.00 & \bf 0.0 & \bf 0.1 & \bf 0.0 & \bf 0.0 & \bf 0.3 & \bf 0.0 & \bf 0.1 & \bf 1.4 & \bf 1.5 \\ 
	2.0 & 0.13 & 0.0 & 0.8 & 1.8 & 0.0 & 1.4 & 3.3 & 0.0 & 7.8 & 23.1 \\ 
	2.5 & 0.29 & 0.0 & 4.9 & 29.4 & 0.1 & 8.4 & 40.2 & 0.4 & 27.3 & 77.7 \\ 
	3.0 & 0.45 & 0.1 & 21.2 & 86.4 & 0.1 & 27.4 & 90.0 & 1.1 & 53.9 & 98.4 \\ \hline \hline
	\end{tabular}  \smallskip
\caption{\it Empirical rejection rates of various tests for the hypotheses  \eqref{h0avlspec},  different values for the parameter~$a$, different error processes, and sample sizes $n=200, 500, 1000$. }  \vspace{-.6cm}
\label{tab:mu1}
\end{table}
}

Throughout, 
we employed the quartic kernel $K(x)=\tfrac{15}{16}(1-x^2)^2$ for the local linear estimator. Preliminary simulation studies showed that different choices of the kernel led to similar results. The level $\alpha$ was chosen as $5\%$ and $\rho_n$ has been set to $\rho_n={\ell_n^{1+\eps}}/{\sqrt{nh_n}}$ with $\eps=0.001$. The block length of the long-run variance estimator was chosen as 
\[
m_n=\max\big\{\big\lfloor \sqrt{\tfrac{ | \hat \gamma_1|+\dots+|  \hat \gamma_4|}{| \hat \gamma_0|+\dots+| \hat \gamma_4|}}n^{1/3}\big\rfloor,1\big\},
\] 
where $ \hat \gamma_k$ denotes the empirical autocovariance  at lag $k$ of the residuals $\hat{\eps}_{i,n}=X_{i,n}-\tilde{\mu}_{h_n}(i/n)$, for $k=0,\dots,4$. Note that $m_n$ naturally adapts to the serial dependence of the residuals, with $m_n=1$  if the absolute  empirical autocorrelations are small.
The quantiles of $G_{n,j}$ are calculated  by  $2000$ simulation runs. The empirical rejection rates of the null hypothesis $H_0: d_\infty\leq\Delta$ are based on $N=1000$ simulation runs each and are displayed in Tables~\ref{tab:mu1} and \ref{tab:mu2} 
for the test  \eqref{test0} (based on the confidence band),  
the test \eqref{test1rel} (based on the estimate of the sup-norm and  the bound for the quantile of the limit distribution)
and  for the test \eqref{test:approxQuantiles} (based on the estimates of the sup-norm and the Lebesgue measure of the extremal sets).  Results for the  test \eqref{test:basedOnEstimator}, which is the analogue of \eqref{test:approxQuantiles} but with quantiles depending on the Gumbel-distribution, are not presented as they were always inferior to those of \eqref{test:approxQuantiles}.
 The sample size was chosen as  $n = 200,$ $500$  and $1000$. For the  tests   \eqref{test0} and  \eqref{test1rel}  we used
 $\ell_n'$ as defined in \eqref{eq:ln2} instead of $\ell_n$ in \eqref{eq:ln} (as pointed out in Section \ref{sec3} this makes asymptotically no difference).  The lines marked in boldface indicate the boundary of the null hypothesis, that is, the
 parameter where  $d_{\infty} = \Delta$. In order to achieve large power it is desirable that the empirical level of the test  is close to the nominal level $\alpha$  for those models.

{\footnotesize
	\begin{table}[t]
		\begin{tabular}{l|r | rrr | rrr | rrr }		\hline \hline
			$\mu^{(2)}$&&\multicolumn{3}{c|}{test \eqref{test0}}&\multicolumn{3}{c|}{test \eqref{test1rel}} & \multicolumn{3}{c}{test \eqref{test:approxQuantiles}}\\
			$\Delta$ &$d_\infty-\Delta$ & 200 & 500 & 1000 & 200 & 500 & 1000 & 200 & 500 & 1000 \\ 
			\hline
			 \addlinespace[.2cm]
		\multicolumn{11}{l}{\quad\textit{Panel A: iid errors}} \\ 
			1.0 & 1.0 & 99.9 & 100.0 & 100.0 & 100.0 & 100.0 & 100.0 & 100.0 & 100.0 & 100.0 \\ 
			1.5 & 0.5 & 70.1 & 99.0 & 100.0 & 76.0 & 99.5 & 100.0 & 92.4 & 99.9 & 100.0 \\ 
			1.75 & 0.25 & 2.8 & 18.5 & 94.2 & 5.9 & 28.1 & 96.9 & 43.3 & 73.6 & 99.7 \\ 
			\bf 2.0 & \bf 0.0 & \bf 0.0 & \bf 0.0 & \bf 0.0 & \bf 0.0 & \bf 0.0 & \bf 0.0 & \bf 0.0 & \bf 0.2 & \bf 3.0 \\ 
			2.25 & -0.25 & 0.0 & 0.0 & 0.0 & 0.0 & 0.0 & 0.0 & 0.0 & 0.0 & 0.0 \\
			 \addlinespace[.2cm]
		\multicolumn{11}{l}{\quad\textit{Panel B: MA errors}} \\ 
			1.0 & 1.0 & 97.9 & 100.0 & 100.0 & 98.8 & 100.0 & 100.0 & 100.0 & 100.0 & 100.0 \\ 
			1.5 & 0.5 & 46.9 & 93.9 & 100.0 & 58.2 & 97.2 & 100.0 & 86.0 & 99.7 & 100.0 \\ 
			1.75 & 0.25 & 2.6 & 11.0 & 76.3 & 5.0 & 16.9 & 83.9 & 32.5 & 61.3 & 97.8 \\ 
			\bf 2.0 & \bf 0.0 & \bf 0.0 & \bf 0.0 & \bf 0.1 & \bf 0.0 & \bf 0.0 & \bf 0.1 & \bf 0.6 & \bf 0.4 & \bf 3.7 \\ 
			2.25 & -0.25 & 0.0 & 0.0 & 0.0 & 0.0 & 0.0 & 0.0 & 0.0 & 0.0 & 0.0 \\
			 \addlinespace[.2cm]
		\multicolumn{11}{l}{\quad\textit{Panel C: AR errors}} \\ 
			1.0 & 1.0 & 96.6 & 100.0 & 100.0 & 98.0 & 100.0 & 100.0 & 99.8 & 100.0 & 100.0 \\ 
			1.5 & 0.5 & 42.7 & 84.8 & 99.8 & 51.7 & 90.1 & 100.0 & 80.1 & 99.2 & 100.0 \\ 
			1.75 & 0.25 & 5.6 & 11.1 & 55.9 & 8.7 & 17.3 & 67.1 & 33.4 & 56.1 & 91.5 \\ 
			\bf 2.0 & \bf 0.0 & \bf 0.0 & \bf 0.0 & \bf 0.1 & \bf 0.2 & \bf 0.2 & \bf 0.5 & \bf 2.1 & \bf 1.2 & \bf 5.1 \\ 
			2.25 & -0.25 & 0.0 & 0.0 & 0.0 & 0.0 & 0.0 & 0.0 & 0.0 & 0.0 & 0.0 \\
			\hline \hline
		\end{tabular} \smallskip
		\caption{\it 
		Empirical rejection rates of  different tests  for the hypotheses \eqref{h0allspec} for $\mu=\mu^{(2)}$, different error processes, different choices of $\Delta$ and sample sizes $n=200, 500, 1000$.}  \vspace{-.4cm}
		\label{tab:mu2}
	\end{table}
}	
	
To interpret the empirical rejection rates, note that the null hypothesis in the models involving  $\mu_a^{(1)}$ is true if and only if $a\leq a^* \approx 1.58$. Likewise, for the models involving $\mu^{(2)}$ the null hypothesis is true if and only if $\Delta\ge 2$. It can be seen that all tests under consideration are conservative, in particular for the models involving $\mu_a^{\scs (1)}$. Recall that the theory  in Sections \ref{sec2}--\ref{sec4} suggests that tests  \eqref{test0} and \eqref{test1rel} should be conservative for all models under consideration, while test  \eqref{test:approxQuantiles} should either be conservative, or yield rejection rates close to the  nominal level on the boundary of the null hypothesis for models involving $\mu^{(2)}$ (for which $\mu(\Ec)>0$). The empirical findings perfectly correspond to this theoretical prediction.
In terms of power, the results are similar, with test \eqref{test:approxQuantiles} clearly being the most powerful. The superiority in terms of power  is especially visible for alternatives for which $d_\infty-\Delta$ is small (say, around or below 1/4, which is half the standard deviation of the noise $\eps_t$). For $d_\infty-\Delta$ as low as 0.13 (model $\mu_a^{\scs (1)}$ with $a=2$), test \eqref{test:approxQuantiles} is the only test with a non-trivial power.

\begin{figure}[t!] 
\vspace{-.4cm}
	\mbox{\hspace{-.05cm}
	\includegraphics[width=69mm]{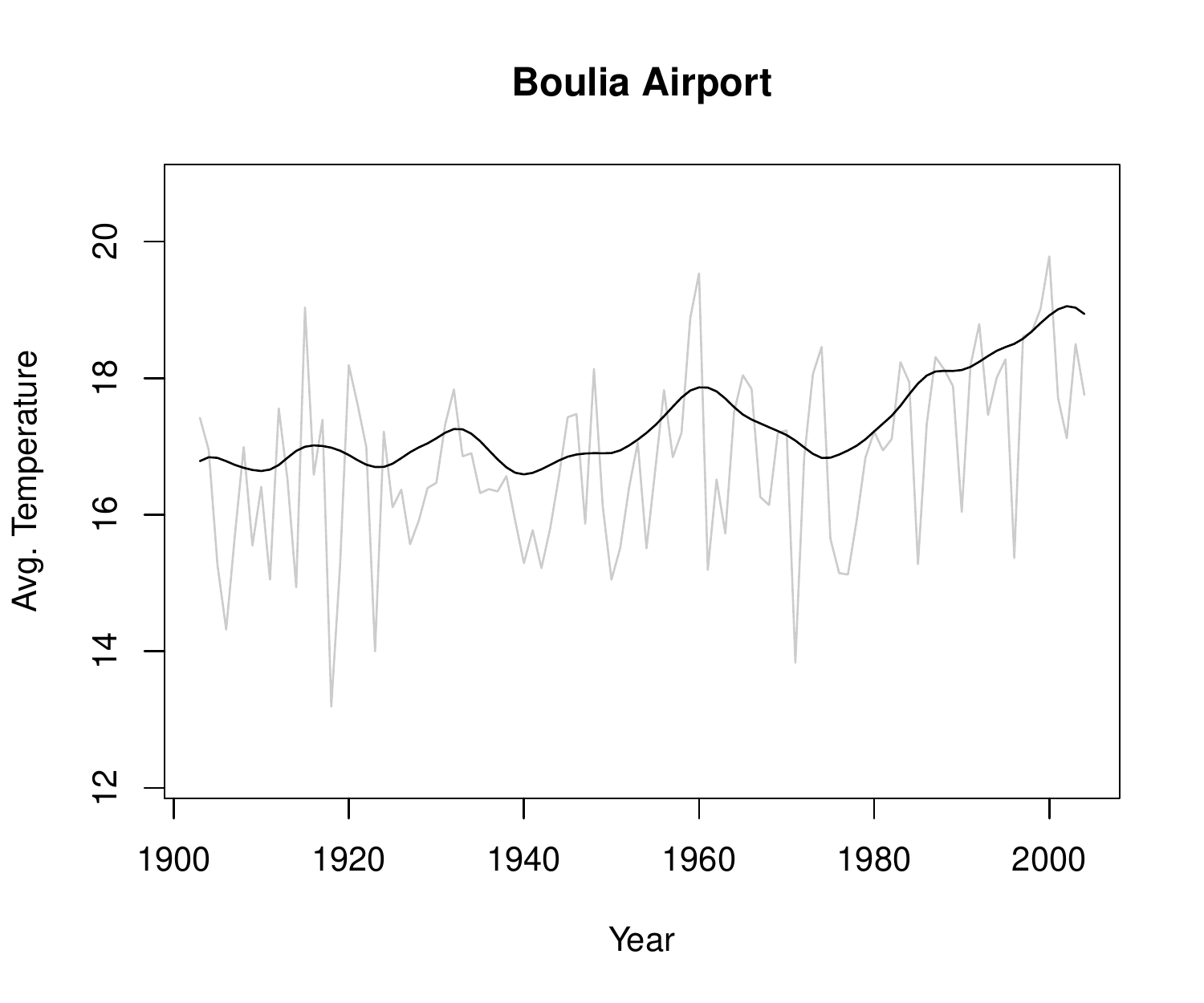}
	\includegraphics[width=69mm]{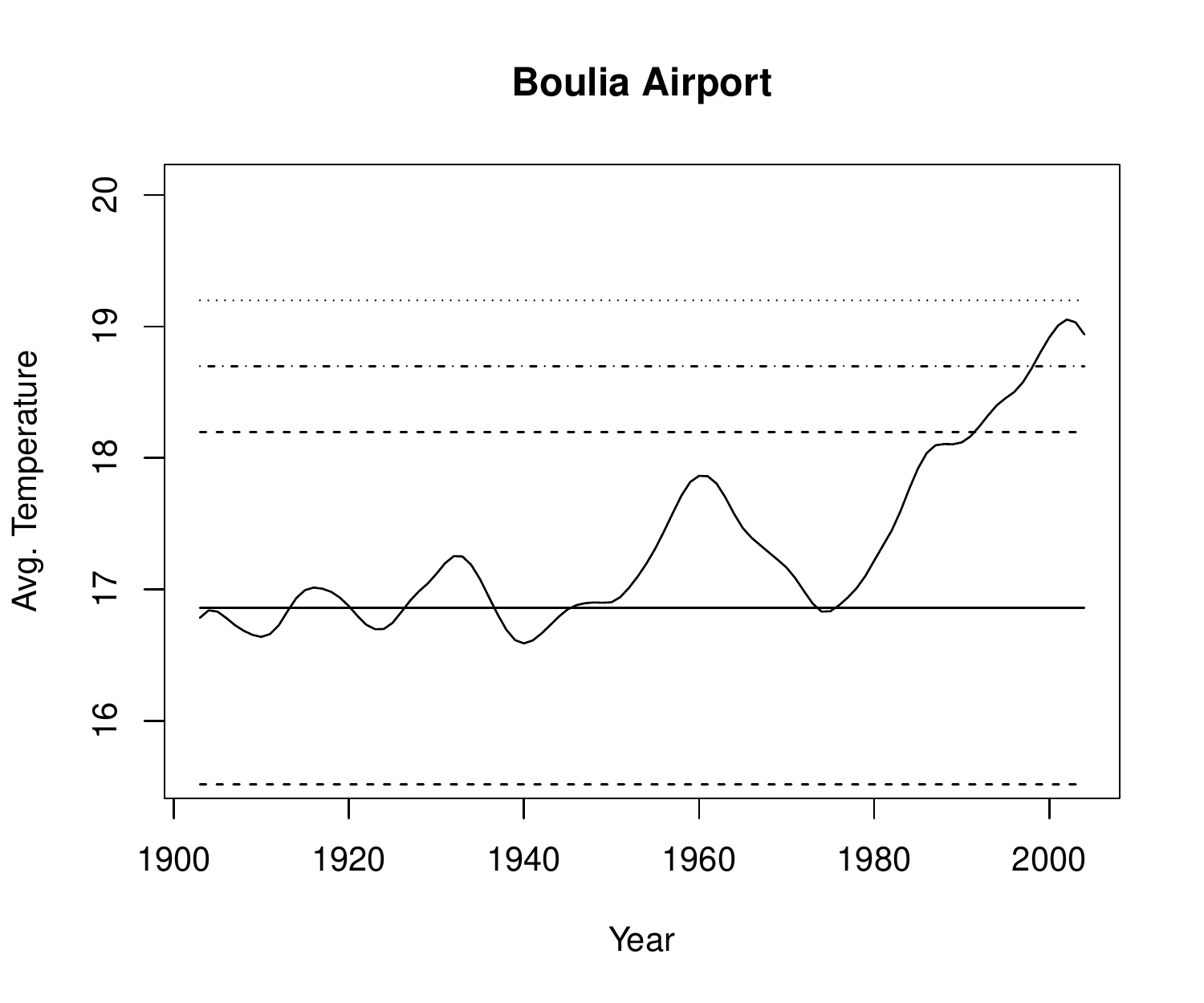}}  \\
	\vspace{-.28cm}
	\mbox{\hspace{-.05cm}
	\includegraphics[width=69mm]{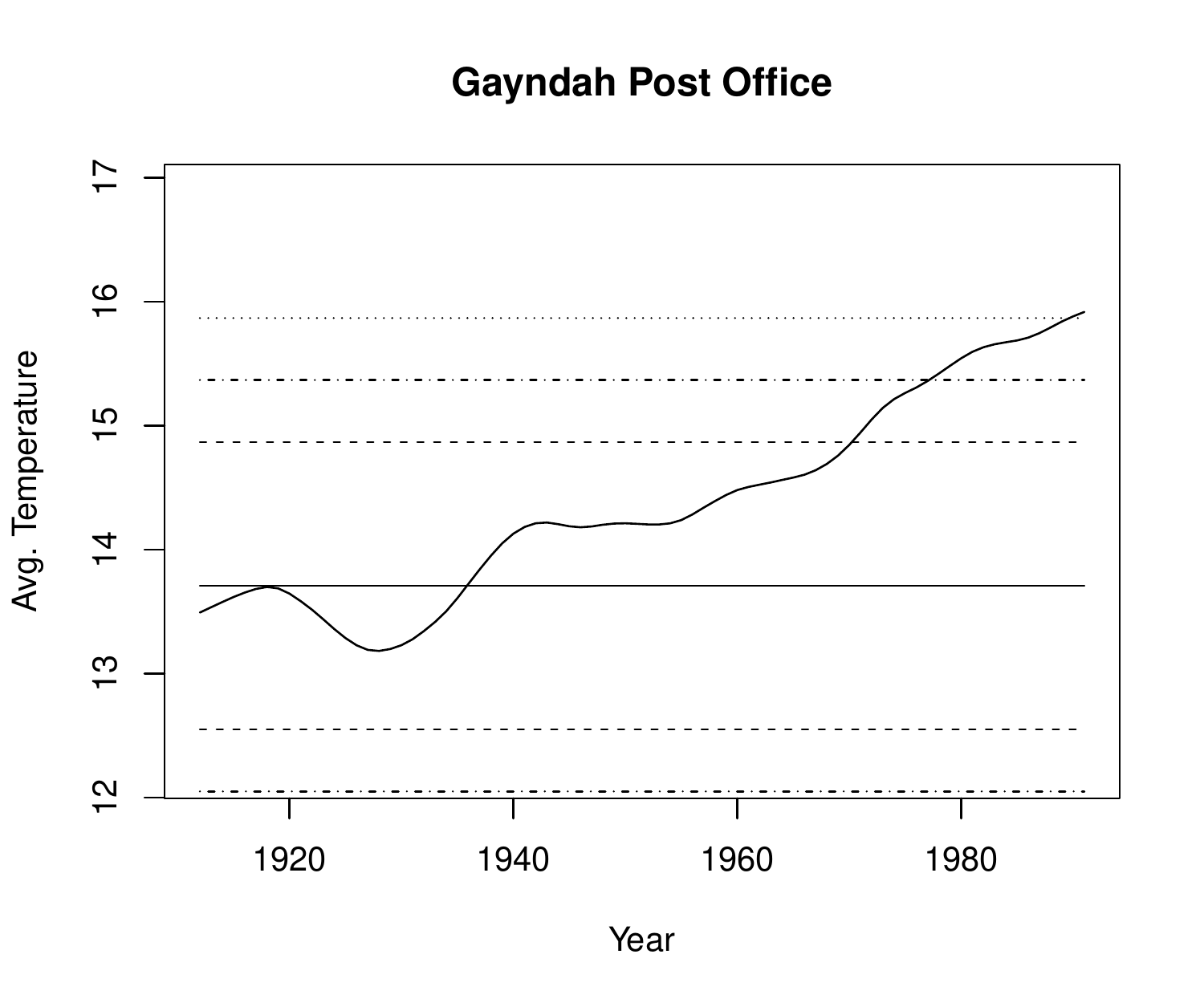}
	\includegraphics[width=69mm]{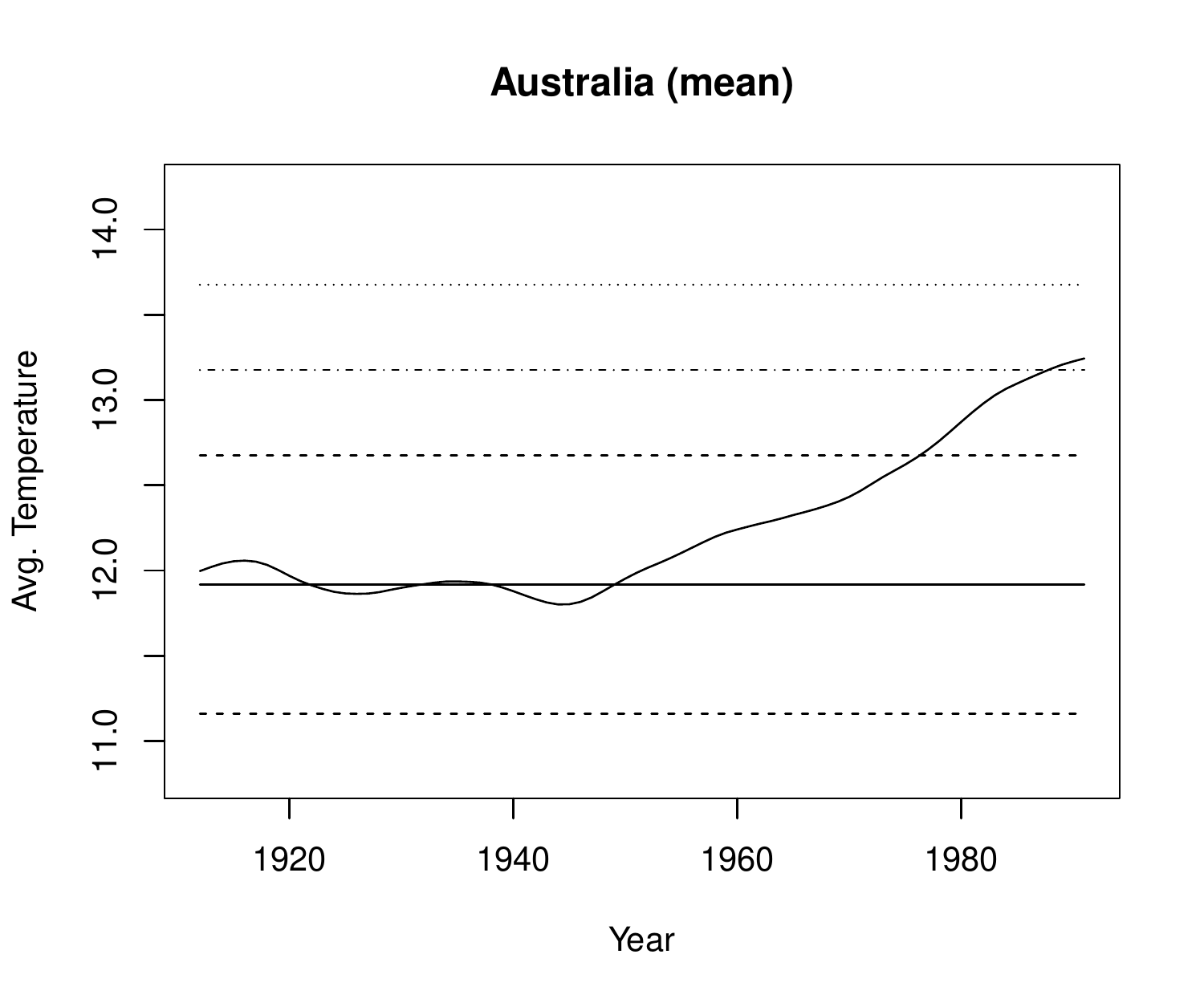}}
	\vspace{-.4cm}
	\caption{\it{ Top left: Raw data of the temperature in Boulia. Top Right: estimate $\tilde \mu_{h}$ for the mean temperature in Boulia (solid curve); mean over the temperature from the beginning of the records until 1950 (solid straight line); boundary of the test decision for $\Delta=0.5$, $\Delta=1.0$ and $\Delta=1.5$ (dashed line, dotted and dashed line, dotted line, respectively). Bottom left: Same as top right, but for Gayndah Post Office. Bottom right: Same as top right, but for the mean over the all stations under consideration. }}
	\label{fig:realData}
\end{figure}

\subsection{Case Study}
Time series with possibly non-constant mean naturally arise in the field of meteorology. To illustrate the proposed methodology, we consider the mean of daily minimal temperatures (in degrees Celsius) over the month of July for a period of approximately 120 years at eight different places in Australia. 
Exemplary, the observed temperature curve at the weather station in Boulia is plotted in the upper left of Figure~\ref{fig:realData}, alongside with its estimated smooth mean curve $\tilde \mu$.
At each station $j$, we tested for relevant deviations of the temperature within the second half of the 20th century from $g_j$,  the mean temperature over a historic reference period ranging from the late 19th century to 1950 at that station. As a threshold $\Delta$, we chose $0.5,1$ and $1.5$ degrees Celsius. The estimated mean curve, alongside with a line corresponding to the overall mean $g_j$ and three lines corresponding to the critical values for test \eqref{test:approxQuantiles}  can be found in Figure~\ref{fig:realData} (Stations Boulia and Gayndah Post Office, as well as the mean over all weather stations).

The results for all stations under consideration can be found in Table~\ref{tab:app}, where we also provide estimates for the first point in time exhibiting a relevant deviation from the historic period. The $p$-values are highly significant at all but $2$ stations for $\Delta=0.5$ degree Celsius, and at all but 3 stations for $\Delta=1$  degree Celsius. For $\Delta=1.5$  degree Celsius, only two stations exhibit $p$-values slightly below $0.05$. Finally, it is worthwhile to mention that tests \eqref{test0} and \eqref{test1rel} yield no significant $p$-values at all. Thus, the proposed test in \eqref{test:approxQuantiles} is clearly more adequate in the given context  for detecting relevant changes.

{\small
\begin{table}[t!]
	\begin{tabular}{r | r r r | r r r  } \hline \hline
		$\Delta$ & \multicolumn{1}{c}{0.5} & \multicolumn{1}{c}{1.0} & \multicolumn{1}{c|}{1.5} & \multicolumn{1}{c}{0.5} & \multicolumn{1}{c}{1.0} & \multicolumn{1}{c}{1.5} \\
		\hline
		Boulia & \textbf{0.0} & \textbf{0.6} & 9.6 & 1957 & 1960& $\infty$ \\
		Cape Otway & 62.8 & 100.0 & 100.0 & $\infty$ & $\infty$ & $\infty$ \\
		Gayndah & \textbf{0.0} & \textbf{0.0} & \textbf{3.5} & 1951 & 1969 & 1974 \\
		Gunnedah & \textbf{0.0} & \textbf{0.1} & \textbf{4.3} & 1951 & 1956 & 1976\\
		Hobart & \textbf{3.1} & 98.0 & 100.0 & 1975  & $\infty$ & $\infty$ \\
		Melbourne & \textbf{0.0} & \textbf{0.2} & 73.5 & 1968  & 1976& $\infty$ \\
		Robe & 78.3 & 100.0 & 100.0 & $\infty$ & $\infty$ & $\infty$ \\
		Sydney & \textbf{0.0} & 8.8 & 96.0 & 1978 & $\infty$ & $\infty$ \\
		Australia (mean) & \textbf{0.0} & \textbf{1.4} & 99.7 & 1970 & 1982& $\infty$  \\
		\hline \hline
	\end{tabular} \smallskip
	\caption{\it $p$-values of test \eqref{test:approxQuantiles} for the respective null hypotheses in percent (left part)  and estimated time of first relevant deviation (right part). 
	 Significant $p$-values (below 0.05) are in boldface. }\vspace{-.4cm}
	 	\label{tab:app}
\end{table}
}


\begin{appendix}
\section{Proofs} \label{sec8} 

\subsection{Preliminaries}   \label{subsec:prel1} 

In this section we present some general results for   Model~\eqref{modneu} with a locally stationary error process. In Remark \ref{stationary} we specialize the results to the stationary case, as they are needed for the proofs of the statements in Section \ref{sec3} and \ref{sec4}.

 Recall the definition of the local  long-run variance  estimator
 $\hat \sigma^{2} (t) $ in \eqref{eq:defLocLRVest} and let
\begin{equation*}
Z_{n}(t) = \frac{\sqrt{nh_n}}{\|K^*\|_{2}} \bigg(\frac{\tilde{\mu}_{h_n}(t)}{\hat{\sigma}(t)}-\frac{\mu(t)}{\sigma(t)}\bigg), \qquad t \in (0,1).
\end{equation*}
We are going to prove weak convergence results for the supremum of the random functions  $Z_{n}$ and $|Z_n|$ over  sets $\mathcal A \subset [0,1]$.
Note that similar results  are available for closely related processes
  in the case of independent data \citep[see, e.\,g.,][among others]{Johnston1982, Xia1998, Proksch2016},
 but the problem is less well investigated in the dependent case \citep[see, e.\,g.,][among others]{Hansen2008, Li2017, cao2018}.

 \begin{theorem} \label{a1}
 
 Let $\Ac$ denote a compact subset of the interval $[x_0,x_1]$ (or $(0,1)$ if $x_0=0,x_1=1$) with positive Lebesgue measure $\lambda(\Ac)>0$. Assume that $\Ac$
can be represented as a finite union of disjoint compact intervals, that is  $\Ac=\bigcup_{i=1}^m [x_{i,1},x_{i,2}]$, the case $x_{i,1}=x_{i,2}$ being allowed. If Assumptions
\ref{assump:kernel}, \ref{assump:errorLS}  and \ref{assump:muLS}(i) are satisfied and if $h_n\to 0$, $\tau_n\to 0$ and $m_n\to\infty$, such that $m_n/n\to0$, $nh_n\to\infty$ and
\begin{align}
\label{a11} &  |\log(h_n)| \tfrac{\log^{4}n}{n^{1/2}h_n}\leq C<\infty,  \\
\label{a12} & |\log(h_n)| nh_n^{7} \to 0,\\
\label{a13} &  |\log(h_n)|^{1/2} \Big(\tfrac{m_n^{1/4}\sqrt{h_n}}{\tau_n}+\tfrac{\sqrt{nh_n}}{m_n}+\sqrt{nh_n}\tau_n^2+\sqrt{\tfrac{h_nm_n^{5}}{n}}\Big)\to 0,
\end{align}
 then, with $\ell_n(\Ac)$ as defined in \eqref{lnA}, 
 \begin{align*}
&\Big\{\sup_{t\in\Ac}Z_{n}(t)-\ell_n(\Ac)\Big\} \ell_n(\Ac) \weak \Gum_0, \quad 
\Big\{ \sup_{t\in\Ac} |Z_{n}(t)|-\ell_n(\Ac)\Big \}  \ell_n(\Ac) \weak \Gum_{\log(2)}.
\end{align*}
\end{theorem}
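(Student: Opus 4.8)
I would prove Theorem~\ref{a1} by a chain of reductions that replaces $Z_n$ by a linear statistic in the errors, then approximates that statistic by a smooth Gaussian process, and finally applies classical Gaussian extreme-value theory on the finite union of intervals.

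\textbf{Step 1: reduction to a linear statistic.} Writing
\[
\tfrac{\tilde{\mu}_{h_n}(t)}{\hat\sigma(t)}-\tfrac{\mu(t)}{\sigma(t)} = \tfrac{\tilde{\mu}_{h_n}(t)-\mu(t)}{\sigma(t)} + \mu(t)\big(\tfrac{1}{\hat\sigma(t)}-\tfrac{1}{\sigma(t)}\big) + \big(\tilde{\mu}_{h_n}(t)-\mu(t)\big)\big(\tfrac{1}{\hat\sigma(t)}-\tfrac{1}{\sigma(t)}\big),
\]
I would control the last two terms uniformly over $t\in\Ac$: since $\mu$ is bounded and $\sigma$ is bounded away from $0$ (Assumptions~\ref{assump:muLS}(i) and \ref{assump:errorLS}(3)), and $\hat\sigma^2$ is uniformly consistent at the rate of Theorem~\ref{locstat}, multiplying by $\tfrac{\sqrt{nh_n}}{\|K^*\|_2}\ell_n(\Ac)$ and invoking \eqref{a13} makes both contributions $o_\pr(1)$. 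Next, $\tilde{\mu}_{h_n}(t)-\mu(t)=\{\tilde{\mu}_{h_n}(t)-\Exp\tilde{\mu}_{h_n}(t)\}+b_n(t)$, where by the Jackknife construction \eqref{ejackest} and the Lipschitz continuity of $\mu''$ one has $\sup_{t\in I_n}|b_n(t)|=\Oc(h_n^3)+\Oc((nh_n)^{-1})$ (cf.\ Lemma~C.2 in the supplement of \citealp{dettewu2019}); multiplied by $\tfrac{\sqrt{nh_n}}{\|K^*\|_2}\ell_n(\Ac)$ this is $\Oc(\sqrt{|\log h_n|\,nh_n^7})+o(1)\to0$ by \eqref{a12}. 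Finally, expanding the local-linear weights in the interior (again Lemma~C.2 of \citealp{dettewu2019}; here $\Ac\subset(0,1)$ is bounded away from the endpoints, so no boundary effects occur for $n$ large) replaces $\tilde{\mu}_{h_n}(t)-\Exp\tilde{\mu}_{h_n}(t)$ by $\tfrac{1}{nh_n}\sum_{i=1}^n K^*_{h_n}(\tfrac in -t)\eps_i$ up to a uniformly negligible error, where $K^*$ is as in \eqref{eq:defassump:kernel}. Hence it suffices to prove both assertions with $Z_n(t)$ replaced by $W_n(t):=\tfrac{1}{\sqrt{nh_n}\,\|K^*\|_2\,\sigma(t)}\sum_{i=1}^n K^*_{h_n}(\tfrac in -t)\eps_i$; a Riemann-sum computation using the long-run variance shows $\Exp[W_n(t)^2]\to1$ uniformly over $t\in\Ac$.

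\textbf{Step 2: Gaussian approximation.} This is the technical heart and what I expect to be the main obstacle. Using the physical-dependence structure of $(\eps_{i,n})$, in particular the geometric decay $\delta_4(G,i)=\Oc(\chi^i)$ of Assumption~\ref{assump:errorLS}, one couples the partial-sum process of the errors with a Brownian-type Gaussian process and, via summation by parts, obtains a Gaussian process
\[
\mathbb{W}_n(t) = \tfrac{1}{\sqrt{h_n}\,\|K^*\|_2\,\sigma(t)}\int_0^1 K^*_{h_n}(s-t)\,\sigma(s)\,\diff B(s), \qquad B \text{ a standard Brownian motion},
\]
such that $\ell_n(\Ac)\sup_{t\in\Ac}|W_n(t)-\mathbb{W}_n(t)|=o_\pr(1)$. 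The coupling error for the partial sums is of polynomial order in $n$ with a logarithmic correction; after kernel smoothing at bandwidth $h_n$ and multiplication by $\ell_n(\Ac)\sim\sqrt{2|\log h_n|}$, it is precisely condition \eqref{a11} that pushes it below the resolution $o(1/\ell_n(\Ac))$ needed to transfer the extreme-value limit. The delicate points are getting the approximation \emph{uniform} over $\Ac$ and with a rate sharp enough for \eqref{a11} to suffice.

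\textbf{Step 3: extreme-value analysis of $\mathbb{W}_n$ and conclusion.} A direct computation gives, locally uniformly, $\cov(\mathbb{W}_n(t),\mathbb{W}_n(t'))=r(\tfrac{t-t'}{h_n})+o(1)$ with $r(v)=\tfrac{1}{\|K^*\|_2^2}\int K^*(u)K^*(u-v)\diff u$; here the cancellation of the $\sigma$-factors uses that only $|t-t'|\lesssim h_n$ contributes. The function $r$ is the autocorrelation of $K^*$, with $r(0)=1$, $r(v)=1-\tfrac{\Lambda_K^2}{2}v^2+o(v^2)$ as $v\to0$ ($\Lambda_K$ as in \eqref{lamK}), and---crucially---$r(v)=0$ for $|v|\ge2$ since $K^*$ is supported on $[-1,1]$. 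For a single non-degenerate interval $[a,b]\subset\Ac$, the rescaled process $s\mapsto\mathbb{W}_n(h_n s)$ on $[a/h_n,b/h_n]$ is, up to negligible errors, a unit-variance stationary Gaussian process with correlation function $r$; classical extreme-value theory for such processes (Rice's formula gives upcrossing intensity $\tfrac{\Lambda_K}{2\pi}e^{-u^2/2}$ per unit length, together with a Poisson limit for well-separated upcrossings; cf.\ the references in Remark~\ref{rem:test0}) yields $\pr(\sup_{[a,b]}\mathbb{W}_n\le u_n)\to\exp(-e^{-x})$ whenever $\tfrac{(b-a)\Lambda_K}{2\pi h_n}e^{-u_n^2/2}\to e^{-x}$, i.e.\ for $u_n=\ell_n([a,b])+x/\ell_n([a,b])+o(1/\ell_n([a,b]))$; for $|\mathbb{W}_n|$ the intensity doubles, producing the extra $\log2$. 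Writing $\Ac=\bigcup_{i=1}^m[x_{i,1},x_{i,2}]$, the degenerate pieces $x_{i,1}=x_{i,2}$ satisfy $(\mathbb{W}_n(x_{i,1})-\ell_n(\Ac))\ell_n(\Ac)\to-\infty$ and are irrelevant, while for $n$ large the compact support of $r$ makes the suprema over the non-degenerate pieces \emph{exactly} independent (the gaps are fixed and $\gg2h_n$). Expressing each piece's normalization through $\ell_n(\Ac)$ instead of $\ell_n([x_{i,1},x_{i,2}])$ contributes a factor $(x_{i,2}-x_{i,1})/\lambda(\Ac)$, and multiplying the $m$ limiting laws gives $\exp(-e^{-x}\sum_i(x_{i,2}-x_{i,1})/\lambda(\Ac))=\exp(-e^{-x})$ for $\sup_\Ac\mathbb{W}_n$ and $\exp(-e^{-(x-\log2)})$ for $\sup_\Ac|\mathbb{W}_n|$; replacing $\lambda(\Ac)$ by $\lambda(\Ac\cap I_n)$ in $\ell_n(\Ac)$ changes $\ell_n(\Ac)^2$ only by $\Oc(h_n)$ and is harmless. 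Transferring back through Steps 1--2 finishes the proof.
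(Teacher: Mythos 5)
Your overall architecture coincides with the paper's: reduce $Z_n$ to a kernel-weighted linear statistic in the errors (controlling the $\hat\sigma$-error via Theorem~\ref{locstat} and \eqref{a13}, and the Jackknife bias via Lemma~C.2 of \cite{dettewu2019} and \eqref{a12}), pass to a Gaussian process by a strong approximation of the partial sums plus summation by parts, and finish with Gaussian extreme-value theory. The paper implements the coupling with Proposition~5 of \cite{ZhouZhou2013} (giving the discrete process $Z_{n,2}(t)=\tfrac{1}{\sigma(t)\|K^*\|_2\sqrt{nh_n}}\sum_i\sigma(i/n)V_iK^*_{h_n}(i/n-t)$ with an $o_\pr(n^{1/4}\log^2 n)$ coupling error absorbed by \eqref{a11}, exactly as you anticipate) and then invokes Theorem~2 of \cite{Proksch2016} / Piterbarg's Theorem~14.1 for the Gumbel limit; your Step~3 instead re-derives that limit from Rice's formula and a Poisson approximation, and handles the union of intervals by the exact independence coming from the compact support of $K^*$. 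That is a legitimate, more self-contained route to the same conclusion, and your bookkeeping of the constants ($-r''(0)=\Lambda_K^2$, the factor $(x_{i,2}-x_{i,1})/\lambda(\Ac)$ per interval, the extra $\log 2$ for $|\mathbb{W}_n|$, the irrelevance of degenerate points) is correct.

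The one place where your sketch elides a genuinely technical step is the passage from the heteroscedastic Gaussian process (your $\mathbb{W}_n$, with $\sigma(s)$ inside the integral and $\sigma(t)$ outside; the paper's $Z_{n,2}$) to the exactly stationary one (the paper's $Z_{n,3}$, with no $\sigma$ at all). You dispose of this with a pointwise covariance computation and the phrase ``up to negligible errors,'' but the Gumbel limit is sensitive at resolution $o(1/\ell_n(\Ac))=o(|\log h_n|^{-1/2})$, and locally uniform convergence of covariances does not by itself transfer an extreme-value limit. What is needed is a \emph{sup-norm} bound $\sup_{t\in\Ac}|Z_{n,2}(t)-Z_{n,3}(t)|=o_\pr(|\log h_n|^{-1/2})$; the paper devotes a full page to this (display \eqref{hd3a}), combining the Lipschitz continuity of $\sigma$, a chaining bound via Theorem~2.2.4 of \cite{VanWel96}, and a moment-generating-function bound for the maximum of $O(h_n^{-1})$ independent $\chi^2_1$-type variables, ultimately obtaining $\Oc_\pr(\sqrt{h_n})$. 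You should supply an argument of this kind (or a comparison inequality for Gaussian maxima with explicit rates) before the stationary extreme-value machinery of your Step~3 can legitimately be applied.
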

\begin{proof}
First observe that 
\begin{equation} \label{hd1} 
\sup_{t \in I_n} |Z_{n}(t)-Z_{n,1}(t)| = \Oc_\pr\Big(\tfrac{m_n^{1/4}\sqrt{h_n}}{\tau_n}+\tfrac{\sqrt{nh_n}}{m_n}+\sqrt{nh_n}\tau_n^2+\sqrt{\tfrac{h_nm_n^{5}}{n}}\Big) ,
\end{equation} 
where
\begin{equation*} 
 Z_{n,1}(t) = \tfrac{\sqrt{nh_n}}{\sigma(t)\|K^*\|_{2}} \big\{ \tilde{\mu}_{h_n}(t)-\mu(t)\big \}.
 \end{equation*}
Indeed, by Theorem \ref{locstat} and part (3) of Assumption \ref{assump:errorLS},
\begin{align*}
\sup_{t\in[\gamma_n,1-\gamma_n]}\bigg|\frac{1}{\hat{\sigma}(t)}-\frac{1}{\sigma(t)}\bigg|
&= \sup_{t\in[\gamma_n,1-\gamma_n]} \frac{1}{\sigma(t)\hat{\sigma}(t)\{ \sigma(t) + \hat \sigma(t)\}} \big|\sigma^2(t) - \hat\sigma^2(t) \big|  \\
&= \Oc_\pr\Big(\tfrac{m_n^{1/4}}{\sqrt{n}\tau_n}+\tfrac{1}{m_n}+\tau_n^2+\tfrac{m_n^{5/2}}{n}\Big).
\end{align*}
Recalling the definition of the Jackknife estimator $ \tilde{\mu}_{h_n} $ in \eqref{ejackest} 
and the definition of the kernel $K^*$ in \eqref{eq:defassump:kernel} it follows from  Lemma C.2 of the Supplementary Material of \cite{dettewu2019}
that 
\begin{equation}  \label{eq:c2dw}
\sup_{t\in I_n} \Big| \tilde{\mu}_{h_n}(t)-\mu(t)-\frac{1}{nh_n}\sum_{i=1}^{n}\eps_{i,n} K^*_{h_n}(\tfrac{i}{n}-t) \Big|=\mathcal{O}(h_n^3+1/(nh_n)).
 \end{equation}
By Proposition 5 of \cite{ZhouZhou2013}, on a possibly richer probability space, there is a sequence $(V_i)_{i\in\N}$ of independent, standard normally distributed random variables such that 
\[ 
\max_{1\leq j \leq n} \bigg|\sum_{i=1}^{j}\eps_{i,n}-\sum_{i=1}^{j}\sigma\big(\tfrac{i}{n}\big)V_i\bigg|=o_\pr(n^{1/4}\log^2n). 
\]
Then, with the notation $S_j=\sum_{i=1}^{j-1} \{ \eps_{i,n}-\sigma\big(\tfrac{i}{n}\big)V_i \}$ for $j\ge 1$, $S_0 = 0$ and $t\in I_n=[x_0 \vee h_n , x_1 \wedge (1-h_n)]$, summation by parts leads to
\begin{align*} 
&\phantom{{}={}} \bigg| \sum_{i=1}^{n} \eps_{i,n}K^*_{h_n}\big(\tfrac{i}{n}-t\big)-\sum_{i=1}^{n} \sigma\big(\tfrac{i}{n}\big)V_iK^*_{h_n}\big(\tfrac{i}{n}-t\big) \bigg|\\
&= \bigg| \sum_{i=1}^{n} \big(S_{i+1} - S_{i} \big)K^*_{h_n}\big(\tfrac{i}{n}-t\big) \bigg|\\
&= \bigg|K^*_{h_n}(1-t)S_{n+1}-K^*_{h_n}\big(\tfrac{1}{n}-t\big)S_1 - \sum_{i=2}^{n} S_i \Big(K^*_{h_n}\big(\tfrac{i}{n}-t\big)- K^*_{h_n}\big(\tfrac{i-1}{n}-t\big)\Big) \bigg|\\
&\leq \sum_{i=2}^{n}\big|K^*_{h_n}\big(\tfrac{i}{n}-t\big)- K^*_{h_n}\big(\tfrac{i-1}{n}-t\big)\big|o_\pr(n^{1/4}\log^2n)\\
&=
\sum_{i=\lceil nt-nh_n\rceil}^{\lceil nt+nh_n\rceil}\big|K^*_{h_n}\big(\tfrac{i}{n}-t\big) -K^*_{h_n}\big(\tfrac{i-1}{n}-t\big)\big|o_\pr(n^{1/4}\log^2n)= o_\pr(n^{1/4}\log^2n),
\end{align*}
because $K^*$ is Lipschitz continuous and $\supp(K^*)=[-1,1]$. Thus, it follows from \eqref{eq:c2dw}
\begin{equation} \label{eq:jackKnife}
\sup_{t\in I_n} \Big| \tilde{\mu}_{h_n}(t)-\mu(t)-\frac{1}{nh_n}\sum_{i=1}^{n}\sigma\big(\tfrac{i}{n}\big)V_i K^*_{h_n}(\tfrac{i}{n}-t) \Big|=\mathcal{O}(h_n^3)+o_\pr\big(\tfrac{\log^2n}{n^{3/4}h_n}\big). 
\end{equation}
and consequently
\begin{align} 
\sup_{t\in I_n} | Z_{n,1}(t) -  Z_{n,2}(t)| = \mathcal{O}(\sqrt{n}h_n^{7/2})+o_\pr\big(\tfrac{\log^2n}{n^{1/4}h_n^{1/2}}\big) ,  \label {hd2a} 
\end{align}
where
\begin{align*}
Z_{n,2}(t) &= \frac{1}{\sigma(t)\|K^*\|_{2}\sqrt{nh_n}}\sum_{i=1}^{n}\sigma\big(\tfrac{i}{n}\big)V_iK_{h_n}^*\big(\tfrac{i}{n}-t\big).
\end{align*}

Further, let
\begin{align}
Z_{n,3}(t)=\frac{1}{\|K^*\|_{2}\sqrt{nh_n}}\sum_{i=1}^{n}V_iK_{h_n}^*\big(\tfrac{i}{n}-t\big).
\label{hd30}
\end{align}
Recalling the definition of $\Lambda_{K}$ in \eqref{lamK} and $\ell_n(\Ac)$ in \eqref{lnA}  we obtain by similar arguments as in the proof of Theorem 2 in  \cite{Proksch2016}  that
\begin{align*} 
&\Big\{ \sup_{t\in\Ac} Z_{n,3}(t)-\ell_n(\Ac)\Big\} \ell_n(\Ac)\weak \Gum_{0}, 
\quad 
\Big\{ \sup_{t\in\Ac}| Z_{n,3}(t)|-\ell_n(\Ac)\Big\} \ell_n(\Ac) \weak \Gum_{\log(2)}.
\end{align*}
The assertion follows from the stochastic expansions \eqref{hd1}, \eqref{hd2a} and 
\begin{align} \label{hd3a}
\sup_{t\in I_n} |Z_{n,2}(t) - Z_{n,3}(t) | = o_\pr(1),
\end{align}
observing the assumptions \eqref{a11}, \eqref{a12} and \eqref{a13}.

It remains to prove \eqref{hd3a}. First observe that
\begin{align}\label{eq:axel1}
&\phantom{{}={}}\ex\big[\big(Z_{n,2}(t)-Z_{n,3}(t)-Z_{n,2}(s)+Z_{n,3}(s)\big)^2 \big]\\
&= \frac{1}{\|K^*\|_{2}^2nh_n}\sum_{i=1}^{n}\bigg\{\frac{\sigma(\tfrac{i}n)-\sigma(t)}{\sigma(t)}K_{h_n}^*\big(\tfrac{i}{n}-t\big)-\frac{\sigma(\tfrac{i}n)-\sigma(s)}{\sigma(s)}K_{h_n}^*\big(\tfrac{i}{n}-s\big)\bigg\}^2. \nonumber
\end{align}
Note that the only non zero summands are those with $|i/n-t|\leq h_n$ or $|i/n-s|\leq h_n$. In the following, we only consider the case $|i/n-s|\leq h_n$ as the case $|i/n-t|\leq h_n$ can be investigated with the same arguments. By Assumption \ref{assump:muLS}(i) and the mean value theorem it exists $\xi\in \big(\tfrac{i}{n}\wedge s,\tfrac{i}{n}\vee s\big)$ such that
\begin{align*}
&\phantom{{}={}}\frac{\sigma(i/n)-\sigma(t)}{\sigma(t)}K_{h_n}^*\big(\tfrac{i}{n}-t\big)-\frac{\sigma(i/n)-\sigma(s)}{\sigma(s)}K_{h_n}^*\big(\tfrac{i}{n}-s\big)\\
&=\frac{\sigma(i/n)[\sigma(s)-\sigma(t)]}{\sigma(s)\sigma(t)}K_{h_n}^*\big(\tfrac{i}{n}-t\big) + \frac{\sigma(i/n)-\sigma(s)}{\sigma(s)}\Big[K_{h_n}^*\big(\tfrac{i}{n}-t\big)-K_{h_n}^*\big(\tfrac{i}{n}-s\big)\Big]\\
&= \frac{\sigma(i/n)[\sigma(s)-\sigma(t)]}{\sigma(s)\sigma(t)}K_{h_n}^*\big(\tfrac{i}{n}-t\big) + \frac{\sigma'(\xi)}{\sigma(s)}\big(\tfrac{i}{n}-s\big)\Big[K_{h_n}^*\big(\tfrac{i}{n}-t\big)-K_{h_n}^*\big(\tfrac{i}{n}-s\big)\Big].
\end{align*}
Since $\sigma$ is bounded away from zero and Lipschitz continuous, the first summand on the right-hand side of the previous display can be bounded by $C|s-t|$. Since $|i/n-s|\leq h_n$ and $K^*$ is Lipschitz continuous, the second summand can be bounded by $C|s-t|$ as well. Hence, the non-zero summands in \eqref{eq:axel1} can be bounded by $C|s-t|^2$. As there are only $\Oc(nh_n)$ non-zero summands, the right-hand side of \eqref{eq:axel1} can be bounded by $C|s-t|^2$. With this bound, Theorem 2.2.4 of \cite{VanWel96} leads with $\psi(x)=x^2,\delta=2h_n$ and $\eta=\sqrt{h_n}$ to
$$ 
\ex\bigg[ \sup_{|s-t|<2h_n} \big\{ Z_{n,2}(t)-Z_{n,3}(t)-Z_{n,2}(s)+Z_{n,3}(s)\big\}^2 \bigg] \leq Ch_n^{1/2}. 
$$
With the notation 
$$
\Tc_n = \{2j nh_n+1:j\in\N \}\cap [nx_0\vee nh_n, nx_1\wedge (n-nh_n)],
$$ 
it follows by the triangle inequality
\begin{align}\label{eq:bound_sup_moment}
&\phantom{{}={}} 
\ex\Big[ \sup_{t\in I_n} \big\{ Z_{n,2}(t)-Z_{n,3}(t)\big\}^2 \Big]^{1/2}\\
&= \nonumber
\ex\bigg[ \max_{j\in \Tc_n} \sup_{|t-j/n|\leq 2h_n} \big\{ Z_{n,2}(t)-Z_{n,3}(t)\big\}^2 \bigg]^{1/2}\\
&\leq \nonumber
\ex\Big[ \max_{j\in \Tc_n} \big\{Z_{n,2}(\tfrac{j}{n})-Z_{n,3}(\tfrac{j}{n})\big\}^2 \Big]^{1/2}  \\
&\hspace{1.5cm} + \nonumber
\ex\bigg[ \sup_{|s-t|<2h_n} \big\{Z_{n,2}(t)-Z_{n,3}(t)-Z_{n,2}(s)+Z_{n,3}(s)\big\}^2 \bigg]^{1/2}\\
&\leq  
\ex\Big[ \max_{j\in \Tc_n} \big\{ Z_{n,2}(\tfrac{j}{n})-Z_{n,3}(\tfrac{j}{n})\big\}^2 \Big]^{1/2} + C\sqrt{h_n}. \nonumber
\end{align}
Observe that the indices in the maximum on the right-hand side of the previous display have a distance of $2nh_n$. Further, the summation in the definition of $Z_{n,2}(j/n)-Z_{n,3}(j/n)$ ranges from $\lfloor j-nh_n\rfloor$ to $\lfloor j+nh_n \rfloor$, thus, the random variables in the latter maximum are independent and, by definition, normally distributed. Further, observe that $|\Tc_n|\leq \tfrac{1}{2h_n}$. With the notation $Z_j= Z_{n,2}(j/n)-Z_{n,3}(j/n)$ and $\sigma_j^2=\var(Z_j)$, it follows that, for all $t\in(0, 1/ \{2 \max_{j \in \Tc_n} \sigma_j^2\})$,
\begin{align}\label{eq:bound_max_moment}
\exp\Big(t\ex\big[\max_{j \in \Tc_n} Z_j^2\big]\Big)
&\leq 
\ex\Big[\exp\big(t \max_{j \in \Tc_n} Z_j^2\big) \Big] 
=  
\ex\big[\max_{j \in \Tc_n} \exp(t  Z_j^2) \big]\\
&\leq \nonumber
\sum_{j \in \Tc_n} \ex\big[\exp(t  Z_j^2) \big] 
= \nonumber
\sum_{j \in \Tc_n} \ex\big[\exp(t \sigma_j^2 (Z_j/\sigma_j)^2) \big] \\
&= 
\sum_{j \in \Tc_n} \frac{1}{\sqrt{1-2t\sigma_j^2}}
\le  \frac{|\Tc_n|}{\sqrt{1-2t \max_{j \in \Tc_n} \sigma_j^2}},  \nonumber
\end{align}
where we have used the fact that the moment-generating function of the $\chi_1^2$ distribution is $(1-2t)^{-1/2}$. Now observe that, by Lipschitz-continuity of $\sigma$,  for all $j\in\Tc_n$,
\begin{align*}
\sigma_j^2&=\ex\big[ \big\{Z_{n,2}(j/n)-Z_{n,3}(j/n)\big\}^2 \big]\\
&= 
\frac{1}{\sigma^2(j/n)\|K^*\|_{2}^2nh_n}\sum_{i=1}^{n}\Big\{\sigma\big(\tfrac{i}{n}\big)-\sigma\big(\tfrac{j}{n}\big)\Big\}^2\Big\{K_{h_n}^*\big(\tfrac{i}{n}-\tfrac{j}{n}\big)\Big\}^2\leq Ch_n^2.
\end{align*}
Thus, $t_n:=(2Ch_n^2|\log(h_n)|)^{-1} \le  |\log h_n|^{-1}  \times \{2 \max_{j\in \Tc_n} \sigma_j^2\}^{-1}$ is a valid choice for $t$ in \eqref{eq:bound_max_moment}, which leads to
\begin{align*}
\ex\big[\max_{j=1}^{|\Tc_n|} Z_j^2\big] 
&\leq 
\frac{1}{t_n}\log\Big(\frac{|\Tc_n|}{\sqrt{1-2t_nCh_n^2}}\Big)\\
&= 
2Ch_n^2 |\log(h_n)| \log\bigg(\frac{1}{2h_n\sqrt{1-|\log(h_n)|^{-1}}}\bigg) =\Oc\big(h_n^2|\log(h_n)|^2 \big).
\end{align*}
Plugging this bound into \eqref{eq:bound_sup_moment} yields $\ex\big[ \sup_{t\in I_n} \big\{ Z_{n,2}(t)-Z_{n,3}(t)\big\}^2 \big]=\Oc(h_n)$. In particular, $\sup_{t\in I_n} |Z_{n,2}(t) - Z_{n,3}(t) | = \Oc_\pr(\sqrt{h_n})$, thus, \eqref{hd3a} follows.
\end{proof}

\begin{remark} \label{rem:convProksch} 
 Conditions \eqref{a11}--\eqref{a13} are for instance satisfied if $h_n=n^{-c}$, for some $c \in (3/7,1/2)$, $m_n=|\log h_n|^{1/2} \log n \sqrt{nh_n}$ and $\tau_n=m_n^{-1/2}$.
\end{remark}

\begin{remark} \label{stationary}
~
\begin{compactenum}[(i)]
\item	The representation of $\Ac$ as a union of finitely many intervals is needed to ensure the \textit{blowing up} property of $h_n^{-1}\Ac$ in order to apply Theorem 14.1 of \cite{piterbarg2012} in the proof of Theorem 2 of \cite{Proksch2016}. In fact, $\Ac$ can be replaced by any sequence $(\Ac_n)_{n\in\N}$ of subsets of $I_n$ with $(h_n^{-1}\Ac_n)_{n\in\N}$ satisfying the blowing up property (cf. Definition 14.1 in \citealp{piterbarg2012}).
	\item 
It follows from the proof of Theorem \ref{a1} that in the situation of a stationary error process as considered in Section \ref{sec2} - \ref{sec4} 
the weak convergence 
 \begin{align}\label{eq:convProkschStat}
\Big\{\sup_{t\in\Ac}Z_{n,j}(t)-\ell_n(\Ac)\Big\}\ell_n(\Ac) 
&\weak   \Gum_{0}, \\
\Big\{\sup_{t\in\Ac} |Z_{n,j}(t)|-\ell_n(\Ac)\Big\}\ell_n(\Ac) 
&\weak \Gum_{\log(2)},
\label{eq:convProkschAbsStat}
\end{align}
remains valid for $Z_{n,3}$ as defined in \eqref{hd30}, as well as for the processes
\begin{align*}
 Z_{n,1}(t) &= \frac{\sqrt{nh_n}}{\sigma \|K^*\|_{2}} \big \{ \tilde{\mu}_{h_n}(t)- \mu(t) \big \} ,\quad 
\hat Z_{n,1}(t) = \frac{\sqrt{nh_n}}{\hat \sigma \|K^*\|_{2}} \big \{ \tilde{\mu}_{h_n}(t)- \mu(t) \big \},
\end{align*}
where $\sigma^{2} $ is the long-run variance defined in \eqref{eq:defLRV}, with corresponding estimator $\hat \sigma^2$ defined in \eqref{lrvest}. Moreover
\begin{align}
\label{eq:zn1zn3}
Z_{n,1}=Z_{n,3}+o_\pr( |\log h_n|^{-1/2}). 
\end{align}
The proof does not require condition \eqref{a13} (as there is no varying long-run variance which has to be estimated). 
\end{compactenum}
\end{remark}

\begin{remark} \label{confband}
The assertion regarding $\hat Z_{n,1}$ in Remark~\ref{stationary}(ii) allows for the construction of simultaneous $(1-\alpha)$-confidence bands for the regression function $\mu$. 
	More precisely, a careful inspection reveals that we may replace $\ell_n(I_n)$ by $\ell_n=\ell_n([x_0, x_1])$ in the weak convergence result, which  implies that the collection of intervals
	\[ 
	\tilde I_n(t) = [ \tilde{\mu}_{h_n}(t)-c_{n,\alpha}, \tilde{\mu}_{h_n}(t)+c_{n,\alpha}], \qquad {t\in I_n},
	\]
	with
	\begin{equation} \label{cnalpha} 
	c_{n,\alpha} = \Big(q_{\log(2), 1-\alpha} +\ell_n^2\Big) \frac{\hat\sigma\|K^*\|_2}{\sqrt{nh_n}\ell_n},
	 \end{equation}  
	defines an asymptotic simultaneous $(1-\alpha)$-confidence band for  $\mu$  in model \eqref{Lipschiss}, i.e., 
		\begin{align*}
	\quad\lim_{n\to\infty} \Prob\big( \mu(t) \in \tilde I_n(t)  \ \forall\, t \in I_n\big) =
	\lim_{n\to\infty} \Prob( \sup_{t\in I_n} | \tilde \mu_{h_n}(t) - \mu(t) | \le c_{n,\alpha}) 
	= 1-\alpha.
	\end{align*}	
As a further consequence, if additionally Assumption \ref{assump:ghat}  is met, the intervals in \eqref{cb} 
define  an asymptotic simultaneous $(1-\alpha)$-confidence band for the function $\mu - g(\mu)$ and the 
	decision rule \eqref{test0} 
	is  a consistent, asymptotic level $\alpha$ test.
\end{remark}

\subsection{Proofs for Section  \ref{sec3}}   \label{subsec:proof3} 
\begin{proof}[Proof of Theorem \ref{thm:Dn}]   
The assertion in (3) is a consequence of Theorem~\ref{conv:DnEn} proven below: first, choose a positive sequence $\rho_n=o(1)$ such that 
$\rho_nh_n^{-2}\to\infty$.  With $\Ec_n$ and $\ell_n(\Ec_n)$ as defined in \eqref{eq:defE} and \eqref{lnA} respectively, it holds $\ell_n(\Ec_n) \to \infty$ (Proposition~\ref{prop:BlowingUp}) and $\ell_n(\Ec_n)=o(\ell_n)$ since $\lambda(\Ec_n)\to 0$. Further, it holds $d_\infty>0$. Thus, applying Theorem~\ref{conv:DnEn},
\begin{align*}
\tfrac{\ell_n \sqrt{nh_n}}{\sigma\|K^*\|_2} ( \hat d_{\infty,n} - d_{\infty, n} ) - \ell_n^2
&\ll_S
\ell_n\Big  ((G_{n,2} +o_\Prob(1)) / \ell_n(\Ec_n) + \ell_n(\Ec_n)-{\ell_n}\Big) ,
\end{align*}
which converges to $-\infty$ in probability as asserted.

Regarding (1) and (2),
recall the definition of $\ell_n$  in \eqref{eq:ln} and let
	\begin{equation}\label{dn} 
	D_n= \tfrac{\ell_n \sqrt{nh_n}}{\sigma\|K^*\|_2}\Big\{ \sup_{t\in I_n}|\tilde{\mu}_{h_n}(t)-g(\mu)|-\sup_{t\in I_n}|\mu(t)-g(\mu)|\Big\} -\ell_n^2.
	\end{equation}
	Observing that
	\[
	\tfrac{\ell_n \sqrt{nh_n}}{\sigma\|K^*\|_2}\Big\{ \sup_{t\in I_n}|\tilde{\mu}_{h_n}(t)-\hat g_n|-\sup_{t\in I_n}|\tilde{\mu}_{h_n}(t)-g(\mu)|\Big\} = o_\pr(1)
	\]
	  by Assumption \ref{assump:ghat}, the assertion of Theorem \ref{thm:Dn} is a consequence of the weak convergence
$
D_n \convw \Gum_{\log\{ \lambda(\Ec)/(x_1-x_0) \} }
$ 
if $d_\infty>0$, and $D_n\convw \Gum_{\log(2)}$ if $d_\infty=0$.

The statement in (2), i.e., for $d_\infty=0$, now follows from  Remark \ref{stationary}(i) and (ii).
The  statement in (1), i.e., for $d_\infty>0$, is a consequence of the  following  two propositions. 
\end{proof}

	\begin{proposition}\label{thm:DnE}
If  the assumptions of Theorem \ref{thm:Dn}(1) are satisfied, then
		\[ 
		D_n(\Ec)= \tfrac{\ell_n \sqrt{nh_n}}{\sigma\|K^*\|_2} \Big\{ \sup_{t\in\Ec\cap I_n}|\tilde{\mu}_{h_n}(t)-g(\mu)|-
		\sup_{t\in \Ec \cap I_n}|\mu(t)-g(\mu)|\Big\}-\ell_n^2 
		\]
		converges weakly to $\Gum_{\log\{ \lambda(\Ec)/(x_1-x_0) \} }$.	\end{proposition}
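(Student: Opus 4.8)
The plan is to reduce the supremum over the extremal set $\Ec$ to a supremum over the "blown-up" set $h_n^{-1}\Ec$ of the Gaussian process $Z_{n,3}$ from Remark~\ref{stationary}, and then to apply the extreme-value asymptotics for stationary Gaussian processes. First I would localize: since $d=\mu-g(\mu)$ attains its maximum absolute value on $\Ec$, on a small neighborhood of $\Ec$ the function $|d|$ is constant equal to $d_\infty$, and outside a fixed neighborhood it is bounded away from $d_\infty$. Using the uniform expansion $\tilde\mu_{h_n}(t)-\mu(t) = \frac{1}{nh_n}\sum_{i=1}^n \eps_i K^*_{h_n}(\tfrac in - t) + \Oc(h_n^3) + o_\pr(\tfrac{\log^2 n}{n^{3/4}h_n})$ from \eqref{eq:jackKnife} together with the Gaussian approximation $Z_{n,1}=Z_{n,3}+o_\pr(|\log h_n|^{-1/2})$ from \eqref{eq:zn1zn3}, one reduces $D_n(\Ec)$ to the analysis of $\sup_{t\in\Ec\cap I_n}|\mu(t)-g(\mu)+\tfrac{\sigma\|K^*\|_2}{\sqrt{nh_n}}Z_{n,3}(t)|$. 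The bias term $\Oc(\sqrt n h_n^{7/2})$ and the remainder $o_\pr(\log^2 n/(n^{1/4}h_n^{1/2}))$ multiplied by $\ell_n$ are negligible precisely under the bandwidth assumptions $nh_n^7|\log h_n|\to 0$ and $\limsup |\log h_n|\log^4 n/(n^{1/2}h_n)<\infty$.

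The second step is the key algebraic manipulation: on $\Ec^+$ we have $\mu(t)-g(\mu)=d_\infty$, so $|\mu(t)-g(\mu)+\tfrac{\sigma\|K^*\|_2}{\sqrt{nh_n}}Z_{n,3}(t)| = d_\infty + \tfrac{\sigma\|K^*\|_2}{\sqrt{nh_n}}Z_{n,3}(t) + o_\pr(\tfrac{1}{\sqrt{nh_n}})$ uniformly (since $\sup|Z_{n,3}|=\Oc_\pr(\ell_n)$ and $d_\infty>0$ is fixed, the absolute value may be dropped for $n$ large), and similarly on $\Ec^-$ the quantity becomes $d_\infty - \tfrac{\sigma\|K^*\|_2}{\sqrt{nh_n}}Z_{n,3}(t)$. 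Hence, up to $o_\pr(1)$ after multiplying by $\tfrac{\ell_n\sqrt{nh_n}}{\sigma\|K^*\|_2}$,
\[
D_n(\Ec) = \ell_n\Big(\sup_{t\in\Ec^+\cap I_n} Z_{n,3}(t) \ \vee\ \sup_{t\in\Ec^-\cap I_n}(-Z_{n,3}(t))\Big) - \ell_n^2 + o_\pr(1).
\]
Since $\Ec$ is, by Remark~\ref{rem:concavity}(ii), a finite union of intervals and points, $\Ec^+$ and $\Ec^-$ are each such finite unions; so I may invoke the version of Theorem~\ref{a1} / Remark~\ref{stationary} for general finite unions of intervals to obtain that $\{\sup_{t\in\Ec^+\cap I_n}Z_{n,3}(t)-\ell_n(\Ec^+)\}\ell_n(\Ec^+)\weak\Gum_0$ and likewise for $\Ec^-$ with $-Z_{n,3}$, and moreover that these two suprema are asymptotically independent because $\Ec^+$ and $\Ec^-$ are separated by a fixed distance (the bandwidth $h_n\to 0$ makes the relevant local windows disjoint, so $Z_{n,3}$ restricted to a neighborhood of $\Ec^+$ and to a neighborhood of $\Ec^-$ are, for large $n$, functions of disjoint blocks of the $V_i$'s). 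The maximum of two asymptotically independent Gumbel-type variables, after rescaling from $\ell_n(\Ec^\pm)$ to the common $\ell_n$ and collecting the $\log$ terms $\log\lambda(\Ec^+)+\log\lambda(\Ec^-) \to$... — more precisely, using $\ell_n(\Ec^\pm)=\ell_n+\tfrac{\log(\lambda(\Ec^\pm)/(x_1-x_0))}{2\ell_n}+o(1/\ell_n)$ — yields $\Gum_{\log(\lambda(\Ec^+)/(x_1-x_0))}$ combined with $\Gum_{\log(\lambda(\Ec^-)/(x_1-x_0))}$, whose maximum is $\Gum_{\log(\lambda(\Ec)/(x_1-x_0))}$ since $\lambda(\Ec)=\lambda(\Ec^+)+\lambda(\Ec^-)$ and $\Gum_a\vee\Gum_b\overset{d}{=}\Gum_{\log(e^a+e^b)}$ for independent copies.

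The main obstacle I anticipate is twofold. First, the rescaling of the location parameter: one must track that replacing $\ell_n(\Ec^\pm)$ by $\ell_n$ in the standardization shifts the limiting Gumbel location by exactly $\log(\lambda(\Ec^\pm)/(x_1-x_0))$, which requires the expansion $\ell_n(\Ac)^2 = 2\log(\Lambda_K\lambda(\Ac\cap I_n)/(2\pi h_n)) = \ell_n^2 + \log(\lambda(\Ac)/(x_1-x_0)) + o(1)$ and the elementary fact that if $a_n(X_n-b_n)\weak G$ with $a_n\to\infty$, $a_n/a_n'\to 1$, $a_n'(b_n-b_n')\to c$, then $a_n'(X_n-b_n')\weak G$ shifted by $c$. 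Second, and more delicate, is justifying the asymptotic independence of $\sup_{\Ec^+}Z_{n,3}$ and $\sup_{\Ec^-}(-Z_{n,3})$ and the reduction of $\sup_{\Ec\cap I_n}$ to the max of the two one-sided suprema; this needs the disjointness of $\Ec^+$ and $\Ec^-$ (guaranteed since $d_\infty>0$) together with the finite-range ($K^*$ supported on $[-1,1]$) structure, so that for $n$ large the normalized partial sums driving $Z_{n,3}$ near $\Ec^+$ and near $\Ec^-$ depend on non-overlapping sets of the innovations $V_i$ and are therefore exactly independent. The rest — controlling the boundary region $I_n\setminus\Ec$ and passing from $\tilde\mu_{h_n}-\hat g_n$ to $\tilde\mu_{h_n}-g(\mu)$ via Assumption~\ref{assump:ghat} — is routine bookkeeping, but it is handled here and upgraded to the full supremum over $I_n$ in the companion Proposition that follows.
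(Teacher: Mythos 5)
Your proposal is correct and follows essentially the same route as the paper: Gaussian approximation via \eqref{eq:zn1zn3}, dropping the absolute value on $\Ec^+$ and $\Ec^-$ because the stochastic fluctuation is $O_\pr(\ell_n/\sqrt{nh_n})=o_\pr(1)$ while $d_\infty>0$ is fixed (the paper phrases this as the ``wrong-sign'' term $D_{n,2}(\Ec)$ diverging to $-\infty$), exact independence of the two suprema for large $n$ from the compact support of $K^*$ and the separation of $\Ec^+$ and $\Ec^-$, and the recentering from $\ell_n(\Ec)$ to $\ell_n$. The one genuine difference is the last step: the paper uses independence and the symmetry $-Z_{n,3}\stackrel{\Dc}{=}Z_{n,3}$ to identify $\max\{\sup_{\Ec^+}Z_{n,3},\sup_{\Ec^-}(-Z_{n,3})\}$ in distribution with the single supremum $\sup_{\Ec\cap I_n}Z_{n,3}$ and then applies the Gumbel limit once to $\Ec$ (which always has $\lambda(\Ec)>0$ in case (1)), whereas you apply the limit to each piece and combine via max-stability $\Gum_a\vee\Gum_b\stackrel{\Dc}{=}\Gum_{\log(e^a+e^b)}$; your route additionally requires a separate (easy) argument when one of $\lambda(\Ec^+),\lambda(\Ec^-)$ is zero or one of the sets is empty, since Theorem~\ref{a1} needs positive Lebesgue measure, while the paper's recombination sidesteps this. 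Two small corrections: your expansion should read $\ell_n(\Ac)^2=\ell_n^2+2\log\{\lambda(\Ac)/(x_1-x_0)\}+o(1)$, hence $\ell_n(\Ec^\pm)=\ell_n+\log\{\lambda(\Ec^\pm)/(x_1-x_0)\}/\ell_n+o(1/\ell_n)$ --- with the factor $2\ell_n$ you wrote, the shift $\ell_n\{\ell_n(\Ec^\pm)-\ell_n\}$ would come out as half the correct location parameter, contradicting your (correct) final answer; and the assertion that $|d|$ equals $d_\infty$ on a \emph{neighborhood} of $\Ec$ is false (it holds only on $\Ec$ itself), though this is harmless here since the proposition only concerns the supremum over $\Ec\cap I_n$.
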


\begin{proposition}\label{thm:Rn}
If  the assumptions of Theorem \ref{thm:Dn}(1) are satisfied, then, with $D_n$ as defined in \eqref{dn},
\[
\Rc_n=D_n-D_n(\Ec)=o_\Prob(1), \qquad n \to\infty.
\]
\end{proposition}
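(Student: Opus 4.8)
The plan is to prove the stronger statement $\Prob(\Rc_n=0)\to1$.

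\emph{Reductions.} Since $\Ec\cap I_n\subseteq I_n$ we have $\Rc_n\ge0$. As $\lambda(\Ec)>0$, the set $\Ec\cap I_n$ is non-empty for all large $n$, so that $d_{\infty,n}=\sup_{t\in I_n}|\mu(t)-g(\mu)|=d_\infty$ and $\sup_{t\in\Ec\cap I_n}|\mu(t)-g(\mu)|=d_\infty$; the deterministic parts of $D_n$ and $D_n(\Ec)$ thus cancel and, for such $n$,
\[
\Rc_n = c_n\Big\{\sup_{t\in I_n}\big|\tilde\mu_{h_n}(t)-g(\mu)\big| - \sup_{t\in\Ec\cap I_n}\big|\tilde\mu_{h_n}(t)-g(\mu)\big|\Big\},\qquad c_n:=\tfrac{\ell_n\sqrt{nh_n}}{\sigma\|K^*\|_2}.
\]
Writing $W_n=\tilde\mu_{h_n}-\mu$, the only probabilistic inputs will be that $\sup_{t\in I_n}|W_n(t)|=\Oc_\pr\big(\tfrac{\sigma\|K^*\|_2}{\sqrt{nh_n}}\ell_n\big)$ and that, for a compact $\Ac\subseteq I_n$ to which the Gumbel-type asymptotics of Theorem~\ref{a1}/Remark~\ref{stationary} apply, $\sup_{t\in\Ac}(\pm W_n(t))=\tfrac{\sigma\|K^*\|_2}{\sqrt{nh_n}}\big\{\ell_n(\Ac)+\Oc_\pr(1)/\ell_n(\Ac)\big\}$ with $\ell_n(\Ac)^2=2\log\!\big(\Lambda_K\lambda(\Ac\cap I_n)/(2\pi h_n)\big)$; recall also that Theorem~\ref{thm:Dn}(1) grants $d_\infty>0$. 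It thus suffices to show $\sup_{t\in I_n}|\tilde\mu_{h_n}(t)-g(\mu)|=\sup_{t\in\Ec\cap I_n}|\tilde\mu_{h_n}(t)-g(\mu)|$ with probability tending to one.

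\emph{Localisation.} Fix an auxiliary $\rho_n\to0$ with $\rho_n\sqrt{nh_n}/\ell_n\to\infty$ (e.g.\ $\rho_n=\ell_n^2/\sqrt{nh_n}$, which tends to $0$ by the bandwidth assumption while $\rho_n\sqrt{nh_n}/\ell_n=\ell_n\to\infty$), and put $\Ec_n^\pm=\{t\in I_n:\ d_\infty\mp d(t)\le\rho_n\}$, $\Ec_n=\Ec_n^+\cup\Ec_n^-$. By continuity of $d$ and $\rho_n\to0$: for large $n$, $\Ec^\pm\subseteq\Ec_n^\pm$, $\Ec_n^+\cap\Ec_n^-=\emptyset$, $\lambda(\Ec_n^\pm)\to\lambda(\Ec^\pm)$; moreover Assumption~\ref{assump:mu}(i) bounds the curvature of $d$, so $d_\infty-d(s)\le C\,\mathrm{dist}(s,\Ec^\pm)^2$ near $\Ec^\pm$, whence $\Ec_n^\pm$ swallows all gaps of $\Ec^\pm$ of length $\lesssim\sqrt{\rho_n}$ and is therefore a \emph{finite} union of at most $\Oc(\rho_n^{-1/2})$ intervals. \emph{(i) Away from $\Ec$}: on $I_n\setminus\Ec_n$ one has $|d|<d_\infty-\rho_n$, so $\sup_{I_n\setminus\Ec_n}|\tilde\mu_{h_n}-g(\mu)|\le d_\infty-\rho_n+\sup_{I_n}|W_n|$, while $\sup_{\Ec\cap I_n}|\tilde\mu_{h_n}-g(\mu)|\ge d_\infty-\sup_{I_n}|W_n|$; their difference is at most $2\sup_{I_n}|W_n|-\rho_n=\Oc_\pr(\tfrac{\sigma\|K^*\|_2}{\sqrt{nh_n}}\ell_n)-\rho_n$, negative with probability tending to one. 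Hence $\sup_{I_n}|\tilde\mu_{h_n}-g(\mu)|=\sup_{\Ec_n\cap I_n}|\tilde\mu_{h_n}-g(\mu)|$ with probability tending to one. \emph{(ii) The excess $\Ec_n\setminus\Ec$ is negligible}: for large $n$, $d\ge d_\infty-\rho_n>0$ on $\Ec_n^+$ and $\sup_{I_n}|W_n|<d_\infty-\rho_n$ with high probability, so there $\tilde\mu_{h_n}-g(\mu)=W_n+d>0$ and $|\tilde\mu_{h_n}-g(\mu)|=W_n+d\le W_n+d_\infty$, with equality on $\Ec^+$; analogously on $\Ec_n^-$ with $-W_n,-d$. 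Thus
\[
\sup_{\Ec_n^\pm\cap I_n}|\tilde\mu_{h_n}-g(\mu)|\le d_\infty+\sup_{\Ec_n^\pm\cap I_n}(\pm W_n),\qquad \sup_{\Ec^\pm\cap I_n}|\tilde\mu_{h_n}-g(\mu)|= d_\infty+\sup_{\Ec^\pm\cap I_n}(\pm W_n),
\]
so, since $\Ec^\pm\subseteq\Ec_n^\pm$, it remains to show: with probability tending to one, $\sup_{\Ec_n^\pm\setminus\Ec^\pm}(\pm W_n)\le\sup_{\Ac_n}(\cdot)$, where $\Ac_n:=\Ec^\pm\cap I_n$ if $\lambda(\Ec^\pm)>0$ and $\Ac_n:=\Ec^\mp\cap I_n$ (of positive measure, since $\lambda(\Ec)>0$) if $\lambda(\Ec^\pm)=0$. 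Setting $\Bc_n:=\Ec_n^\pm\setminus\Ec^\pm$, with $\lambda(\Bc_n\cap I_n)\to0$, the expansion above gives $\sup_{\Ac_n}(\cdot)-\sup_{\Bc_n}(\cdot)=\tfrac{\sigma\|K^*\|_2}{\sqrt{nh_n}}\big\{\ell_n(\Ac_n)-\ell_n(\Bc_n)+\Oc_\pr(1)/\ell_n\big\}$, and from $\ell_n(\Ac_n)^2-\ell_n(\Bc_n)^2=2\log\!\big(\lambda(\Ac_n)/\lambda(\Bc_n\cap I_n)\big)\to+\infty$ together with $\ell_n(\Ac_n)+\ell_n(\Bc_n)=\Oc(\ell_n)$ one gets $\ell_n(\Ac_n)-\ell_n(\Bc_n)\gtrsim\big|\log\lambda(\Bc_n\cap I_n)\big|/\ell_n$, which dominates the $\Oc_\pr(1)/\ell_n$ fluctuation; hence $\sup_{\Ac_n}(\cdot)>\sup_{\Bc_n}(\cdot)$ with probability tending to one (and if $\lambda(\Bc_n\cap I_n)$ is so small that the expansion degenerates, $\sup_{\Bc_n}(\cdot)=\Oc_\pr(\sigma\|K^*\|_2/\sqrt{nh_n})$ is a fortiori smaller). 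Combining (i) and (ii), $\Prob(\Rc_n=0)\to1$, so in particular $\Rc_n=o_\Prob(1)$.

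\emph{Main obstacle.} The crux is part (ii): showing that appending the vanishing-measure excess $\Ec_n\setminus\Ec$ does not raise the governing supremum of $\pm W_n$. This rests on the sharp $\ell_n(\cdot)$-asymptotics of Theorem~\ref{a1}/Remark~\ref{stationary} applied both to a positive-measure piece of $\Ec$ and to the excess sets $\Ec_n^\pm\setminus\Ec^\pm$, which requires the (uniform) blowing-up property of these sets, cf.\ Remark~\ref{stationary}(i). For $\Ec_n^\pm$ this comes from the finite-union-of-intervals structure noted above; for $\Ec^\pm$ — which, absent Assumption~\ref{assump:mu}(ii), may be irregular — one argues as in the proof of Proposition~\ref{thm:DnE}, via Lebesgue-density points and approximation by finite unions of intervals. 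The accompanying case split according to whether $\lambda(\Ec^+)$ and/or $\lambda(\Ec^-)$ vanish (at least one being positive, since $\lambda(\Ec)>0$) is then routine bookkeeping.
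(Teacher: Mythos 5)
Your overall architecture matches the paper's: localise to the enlarged set $\Ec_n$, discard $I_n\setminus\Ec_n$ using $\rho_n\sqrt{nh_n}/\ell_n\to\infty$ (your step (i) is essentially the paper's bound for $D_n(I_n\setminus\Ec_n)-D_n(\Ec)$), and then argue that the excess $\Bc_n=\Ec_n^\pm\setminus\Ec^\pm$ does not carry the supremum. The gap lies in this last step. You compare $\sup_{\Bc_n}(\pm W_n)$ with $\sup_{\Ac_n}(\cdot)$ by applying the sharp expansion $\sup_{\Ac}(\pm W_n)=\tfrac{\sigma\|K^*\|_2}{\sqrt{nh_n}}\{\ell_n(\Ac)+\Oc_\pr(1)/\ell_n(\Ac)\}$ to the shrinking sets $\Bc_n$. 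That expansion is available (Theorem~\ref{a1}, Remark~\ref{stationary}(i)) only when $h_n^{-1}\Bc_n$ satisfies the blowing-up property, and your justification fails at two points. First, $\Ec_n^\pm$ being "a finite union of at most $\Oc(\rho_n^{-1/2})$ intervals" does not follow from swallowing the gaps of $\Ec^\pm$: that argument controls $U_{c\sqrt{\rho_n}}(\Ec^\pm)$, whereas the superlevel set $\Ec_n^\pm$ may contain arbitrarily many additional components disjoint from $\Ec^\pm$, at near-maxima of $d$ accumulating at the level $d_\infty$; under Assumption~\ref{assump:mu}(i) alone their number is unbounded. (Excluding this is precisely the role of Assumption~\ref{assump:mu}(ii), via Remark~\ref{rem:concavity}(ii) and Proposition~\ref{prop:conv:quot}; the paper's proof does invoke it.) Second, even a union of $N_n\asymp\rho_n^{-1/2}$ intervals of total measure $\asymp\sqrt{\rho_n}$ violates condition (iii) of Proposition~\ref{prop:BlowingUp} for small $\alpha$. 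Your fallback for "degenerate" $\Bc_n$ is also false: for the localized Gaussian process the supremum over a set is governed by its covering number at scale $h_n$, not its Lebesgue measure, so a union of many $h_n$-separated pieces of total measure $o(h_n)$ can still have standardized supremum of order $\ell_n$ rather than $\Oc_\pr(1)$.

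The paper sidesteps all of this with a distribution-free argument: using Assumption~\ref{assump:mu}(ii) it covers the excess by a bounded number $2m$ of intervals of length $2(\delta_n+h_n)$, fits $\nu_n\to\infty$ disjoint congruent intervals inside a positive-length component of $\Ec^+$, notes that the suprema of the Gaussian-approximated process over all these intervals are i.i.d., and concludes that the excess maximum exceeds the bulk maximum with probability $1/(\nu_n+1)\to0$ by exchangeability — no extreme-value asymptotics on shrinking sets are needed. Your route can be repaired, but only by (a) invoking Assumption~\ref{assump:mu}(ii) so that $\Bc_n$ is a union of boundedly many intervals, (b) checking that each has length $\gg h_n$ (which does follow from the curvature bound of Assumption~\ref{assump:mu}(i), since $d'$ vanishes at interior extremal points), and only then applying Theorem~\ref{a1}; the subsequent comparison of the deterministic terms $\ell_n(\Ac_n)-\ell_n(\Bc_n)$ with the $\Oc_\pr(1)/\ell_n(\Bc_n)$ fluctuations is correct as you wrote it.
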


	\begin{proof}[Proof of Proposition \ref{thm:DnE}]
		We prove the proposition for the case that both sets $\Ec^+$ and $\Ec^-$ are non-empty. The other cases follow by the same arguments. Throughout the proof, since $\lambda(\Ec)>0$, we may assume that $n$ is sufficiently large such that $d_{\infty,n} = d_\infty$ and $\sup_{t\in \Ec \cap I_n} |d(t)| = \sup_{t \in \Ec} |d(t)| =d_\infty$.
				
		Now, observe that 
		\begin{equation}\label{eq:ratioLnLnE}
		\frac{\ell_n}{\ell_n(\Ec)}
		= \sqrt{\frac{\log(\frac{(x_1-x_0)\Lambda_{K}}{2\pi h_n})}{\log(\frac{\lambda(\Ec)\Lambda_{K}}{2\pi h_n})}}
		= \sqrt{\frac{\log(x_1-x_0)+\log(\frac{\Lambda_{K}}{2\pi h_n})}{\log(\lambda(\Ec))+\log(\frac{\Lambda_{K}}{2\pi h_n})}}\to 1,
		\end{equation}
		as $\lambda(\Ec)>0$ by assumption.
		Recall the definition of $Z_{n,1}$ and $Z_{n,3}$ in Remark~\ref{stationary}(ii). Since $\mu(t)-g(\mu)=d_\infty$ for $t\in\Ec^+ \cap I_n$, we obtain from \eqref{eq:zn1zn3}
		\begin{align}\label{eq:muHatPos}
		& \phantom{{}={}} 
		\tfrac{\ell_n\sqrt{nh_n}}{\sigma\|K^*\|_2} \sup_{t\in\Ec^+\cap I_n} \Big \{ -\big(\tilde{\mu}_{h_n}(t)-g(\mu)\big)\Big \} \\
		\nonumber
		&=\ell_n\sup_{t\in\Ec^+\cap I_n}\Big\{  - Z_{n,1}(t) -  \tfrac{\sqrt{nh_n}}{\sigma\|K^*\|_2} \big(\mu(t)-g(\mu)\big)\Big\} \\ 
		\nonumber
		&=\ell_n\sup_{t\in\Ec^+\cap I_n}\Big\{  - Z_{n,3}(t) -  \tfrac{\sqrt{nh_n}}{\sigma\|K^*\|_2} d_\infty \Big\}+o_\pr(1)\\ 
		\nonumber 
		&\stackrel{\Dc}{=} \ell_n\sup_{t\in\Ec^+\cap I_n} Z_{n,3}(t) -\tfrac{\ell_n\sqrt{nh_n}}{\sigma\|K^*\|_2} d_\infty+o_\pr(1)\\
			\nonumber 
		&\leq \ell_n \Big(\sup_{t\in I_n} Z_{n,3}(t)-\ell_n\Big) -\ell_n\Big(\tfrac{\sqrt{nh_n}}{\sigma\|K^*\|_2} d_\infty-\ell_n\Big) + o_\pr(1).
			\nonumber 
		\end{align}
		Next, observe that the first term on the right-hand side of the previous display converges to a Gumbel distribution by \eqref{eq:convProkschStat}, whereas the second term diverges to $-\infty$ since 
		\begin{equation}\label{eq:ell_nrate}\ell_n=\sqrt{nh_n} \frac{\ell_n}{n^{1/4}h_n^{1/2}} n^{-1/4} =o(\sqrt{nh_n})
		\end{equation}
		by the assumptions on $h_n$. In the same way, it may be shown that
		\[
		\tfrac{\ell_n\sqrt{nh_n}}{\sigma\|K^*\|_2} \sup_{t\in\Ec^-\cap I_n}\Big\{  \tilde{\mu}_{h_n}(t)-g(\mu)  \Big\}  \to - \infty
		\]
		in probability. 

		   Now, note that  $D_n(\Ec)=\max\{D_{n,1}(\Ec), D_{n,2}(\Ec)\}$,  where 
		\begin{multline*}
		D_{n,1}(\Ec)= \tfrac{\ell_n \sqrt{nh_n}}{\sigma\|K^*\|_2}\max\Big\{ \sup_{t\in\Ec^+\cap I_n} \big ( 
		\tilde{\mu}_{h_n}(t)-g(\mu)	-d_\infty \big ) , \\
		 \sup_{t\in\Ec^-\cap I_n} \big ( -\tilde{\mu}_{h_n}(t)+g(\mu)-d_\infty \big )  \Big\}-\ell_n^2  
		\end{multline*}
		and
				\begin{multline*}
				D_{n,2}(\Ec)= \tfrac{\ell_n \sqrt{nh_n}}{\sigma\|K^*\|_2}\max\Big\{ \sup_{t\in\Ec^+\cap I_n}
		\big ( 
		-\tilde{\mu}_{h_n}(t)+g(\mu)-d_\infty \big ) , \\ \sup_{t\in\Ec^-\cap I_n} \big (	\tilde{\mu}_{h_n}(t)-g(\mu)-d_\infty 	\big )\Big\}-\ell_n^2. 
				\end{multline*}
	By the previous considerations, $D_{n,2}(\Ec)$ diverges to $-\infty$, thus 
	\begin{align}
	\label{eq:dndn1}
	D_n(\Ec) = D_{n,1}(\Ec) + o_\pr(1).
	\end{align}
	Next, observe that by similar arguments as in \eqref{eq:muHatPos}
		\begin{align*}
		D_{n,1}(\Ec)
		&= \tfrac{\ell_n \sqrt{nh_n}}{\sigma\|K^*\|_2}\max\Big\{ \sup_{t\in\Ec^+\cap I_n} \big ( 
		\tilde{\mu}_{h_n}(t)-\mu(t) \big ) , \sup_{t\in\Ec^-\cap I_n} \big (  -\tilde{\mu}_{h_n}(t)+\mu(t)\big )  \Big\}-\ell_n^2\\
		&= \ell_n \max\Big \{ \sup_{t\in\Ec^+\cap I_n}Z_{n,3}(t), \sup_{t\in\Ec^-\cap I_n}-Z_{n,3}(t) \Big\}  -\ell_n^2+o_\pr(1).
		\end{align*}
		The sets  $\Ec^+$ and $\Ec^-$ are disjoint and bounded away from each other. Thus, there exists a positive integer $n_0\in\N$ such that the arguments of the maximum in the previous display are stochastically independent for all $n\geq n_0$, and 
		\begin{align*}
		D_{n,1}(\Ec) &\stackrel{\Dc}{=} \ell_n \sup_{t\in\Ec\cap I_n}Z_{n,3}(t)  -\ell_n^2 + o_\pr(1)\\
		&= \frac{\ell_n}{\ell_n(\Ec)}\bigg( \ell_n(\Ec)\sup_{t\in\Ec\cap I_n}Z_{n,3}(t)-\ell_n^2(\Ec)\bigg) + \ell_n(\Ec)\ell_n -\ell_n^2 + o_\pr(1),
		\end{align*}
		 where
		\begin{align}\label{eq:lnEln-ln2} 
		\ell_n(\Ec)\ell_n -\ell_n^2  
		&= \frac{\ell_n}{\ell_n+\ell_n(\Ec)}(\ell_n^2(\Ec)-\ell_n^2) \nonumber \\ 
		&= 
		\frac{\sqrt{\log\big(\frac{(x_1-x_0)\Lambda_{K}}{2\pi h_n}\big)}}{\sqrt{\log\big(\frac{\lambda(\Ec)\Lambda_{K}}{2\pi h_n}\big)}+\sqrt{\log\big(\frac{(x_1-x_0)\Lambda_{K}}{2\pi h_n}\big)}} 2\bigg(\log\Big(\tfrac{\lambda(\Ec)\Lambda_{K}}{2\pi h_n}\Big)-\log\Big(\tfrac{(x_1-x_0)\Lambda_{K}}{2\pi h_n}\Big)\bigg) 
		\end{align}
		converges to $\log\Big(\tfrac{\lambda(\Ec)}{x_1-x_0}\Big)$ since
		\[  \frac{\sqrt{\log\big(\frac{(x_1-x_0)\Lambda_{K}}{2\pi h_n}\big)}}{\sqrt{\log\big(\frac{\lambda(\Ec)\Lambda_{K}}{2\pi h_n}\big)}+\sqrt{\log\big(\frac{(x_1-x_0)\Lambda_{K}}{2\pi h_n}\big)}} \to \frac{1}{2}. \]
		Weak convergence of $D_{n,1}(\Ec)$  to $\Gum_{\log\{ \lambda(\Ec)/(x_1-x_0) \} }$
then follows from \eqref{eq:ratioLnLnE}, \eqref{eq:lnEln-ln2}, and Remark \ref{stationary}(ii), and this implies the assertion by \eqref{eq:dndn1}.		\end{proof}

	\begin{proof}[Proof of of Proposition \ref{thm:Rn}]
		We only carry out the proof in the case that $\lambda(\Ec^+)>0$ and $\lambda(\Ec^-)>0$. For $\Ac\subset I_n$, let
		\begin{equation}\label{def:DnA} 
		D_n(\Ac)= \tfrac{\ell_n \sqrt{nh_n}}{\sigma\|K^*\|_2}\big(\sup_{t\in\Ac}|\tilde{\mu}_{h_n}(t)-g(\mu)|-d_\infty\big)-\ell_n^2
		\end{equation}
		and let $\Ec_n=\Ec_n^+\cup\Ec_n^-$ as in \eqref{eq:defE}, for some positive sequence $\rho_n$ that converges to $0$ slowly enough to guarantee that $\ell_n\sqrt{nh_n}\rho_n\to\infty$. Note that $D_n(\Ac)\leq D_n(\Bc)$ for $\Ac\subset \Bc$, and that there is an $n_0\in\N$ such that the sets   $\Ec_n^+$ and $\Ec_n^-$ are disjoint and bounded away from each other for $n\geq n_0$. With this notation, we can rewrite 
		\[
		\Rc_n= D_n - D_n(\Ec)=\max\big\{D_n(\Ec_n)-D_n(\Ec),D_n(I_n\setminus\Ec_n)-D_n(\Ec) \big\}.\]
		For the second term in the  maximum observe that
		\begin{align*}
		&\phantom{{}={}} D_n(I_n\setminus\Ec_n)-D_n(\Ec)\\
		&= \tfrac{\ell_n \sqrt{nh_n}}{\sigma\|K^*\|_2}\Big\{\sup_{t\in I_n\setminus\Ec_n} |\tilde{\mu}_{h_n}(t)-g(\mu)|-\sup_{t\in\Ec}|\tilde{\mu}_{h_n}(t)-g(\mu)| \Big\}\\
		&\leq \tfrac{\ell_n \sqrt{nh_n}}{\sigma\|K^*\|_2}\Big\{\sup_{t\in I_n\setminus\Ec_n} \Big(|\tilde{\mu}_{h_n}(t)-g(\mu)|-|\mu(t)-g(\mu)|\Big)\\
		&\hspace{2cm}+\sup_{t\in I_n\setminus\Ec_n}\Big(|\mu(t)-g(\mu)|-d_\infty\Big)-\sup_{t\in\Ec}\Big(|\tilde{\mu}_{h_n}(t)-g(\mu)|-d_\infty\Big) \Big\}\\
		&\leq \tfrac{\ell_n \sqrt{nh_n}}{\sigma\|K^*\|_2}\Big\{\sup_{t\in I_n\setminus\Ec_n} |\tilde{\mu}_{h_n}(t)-\mu(t)|-\rho_n-\sup_{t\in\Ec}\Big(|\tilde{\mu}_{h_n}(t)-g(\mu)|-d_\infty\Big) \Big\}\\
		&\leq  \tfrac{\ell_n \sqrt{nh_n}}{\sigma\|K^*\|_2}\sup_{t\in I_n} |\tilde{\mu}_{h_n}(t)-\mu(t)|- \ell_n^2 - \tfrac{\ell_n \sqrt{nh_n}}{\sigma\|K^*\|_2} \rho_n 	-D_n(\Ec)
		\end{align*} 
		which diverges to $-\infty$ since $D_n(\Ec)\convw \Gum_{\log\{ \lambda(\Ec)/(x_1-x_0) \} }$ by Proposition~\ref{thm:DnE}, 
		\[ 
		\tfrac{\ell_n \sqrt{nh_n}}{\sigma\|K^*\|_2}\sup_{t\in I_n} |\tilde{\mu}_{h_n}(t)-\mu(t)|-\ell_n^2 \convw \Gum_{\log(2)},
		\] 
		according to \eqref{eq:convProkschAbsStat}, and $\ell_n \sqrt{nh_n}\rho_n \to \infty$ by assumption. 
		
		As a consequence, it is sufficient to prove that $D_n(\Ec_n)-D_n(\Ec)=o_\Prob(1)$.  
		Suppose we have shown that
		\begin{align} \label{eq:rnpm}
		\Rc_n^{\pm}=D_n(\Ec_n^{\pm})-D_n(\Ec^{\pm})=o_\Prob(1).
		\end{align}
		Then, since $D_n(\Ec_n)-D_n(\Ec)=\max\{D_n(\Ec_n^+)-D_n(\Ec),D_n(\Ec_n^-)-D_n(\Ec)\}\leq \max\{\Rc_n^+, \Rc_n^-\}$,  the proof of the proposition is finished.
		
		For the proof of \eqref{eq:rnpm}, we only consider $\Rc_n^+$, as $\Rc_n^-$ can be treated similarly.
		Similar to \eqref{eq:muHatPos}, it holds
		\begin{align}\label{eq:muHat} 
		 \tfrac{\ell_n \sqrt{nh_n}}{\sigma\|K^*\|_2} \sup_{t\in\Ec_n^+} -\big(\tilde{\mu}_{h_n}(t)-g(\mu)\big) 
		 &= 
		 \ell_n \sup_{t\in\Ec_n^+}\Big\{  - Z_{n,3}(t) - \tfrac{\sqrt{nh_n}}{\sigma\|K^*\|_2}\big(\mu(t)-g(\mu)\big)\Big\} + o_\pr(1)\nonumber  \\
		&\leq \ell_n\sup_{t\in\Ec_n^+}\{ - Z_{n,3}(t)\} -\tfrac{\ell_n\sqrt{nh_n}}{\sigma\|K^*\|_2}(d_\infty-\rho_n) + o_\pr(1) \nonumber  \\
		&\stackrel{\Dc}{=} \ell_n\sup_{t\in\Ec_n^+}Z_{n,3}(t) -\tfrac{\ell_n\sqrt{nh_n}}{\sigma\|K^*\|_2}(d_\infty-\rho_n) + o_\pr(1) \nonumber  \\
		&\leq \ell_n\sup_{t\in I_n}Z_{n,3}(t)-\ell_n^2 -\ell_n\big\{\tfrac{\sqrt{nh_n}}{\sigma\|K^*\|_2}(d_\infty-\rho_n)-\ell_n\big\} + o_\pr(1).
		\end{align}
		The first term on the right-hand side converges by Remark \ref{stationary}(ii) to a Gumbel distribution, whereas the second term diverges to $-\infty$ since $\ell_n/\sqrt{nh_n}\to 0$ by \eqref{eq:ell_nrate}. Hence,
		\[
		\tfrac{\ell_n \sqrt{nh_n}}{\sigma\|K^*\|_2} \sup_{t\in\Ec_n^+} -\big(\tilde{\mu}_{h_n}(t)-g(\mu)\big) \to -\infty
		\]
		in probability,
		and by monotonicity the same is true if $\Ec_n^+$ is replaced by $\Ec^+$.	Thus, by the definition of $D_n(\Ec_n^+)$ in \eqref{def:DnA}, we have 
		\begin{align*}
		\Rc_n^+ 
		&=
		\tfrac{\ell_n\sqrt{nh_n}}{\sigma\|K^*\|_2}\Big\{ \sup_{t\in\Ec_n^+} | \tilde{\mu}_{h_n}(t) - g(\mu)|  -\sup_{t\in\Ec^+} | \tilde{\mu}_{h_n}(t)- g(\mu)| \Big\} \\
		&=
		\tfrac{\ell_n\sqrt{nh_n}}{\sigma\|K^*\|_2}\Big\{ \sup_{t\in\Ec_n^+}\tilde{\mu}_{h_n}(t)-\sup_{t\in\Ec^+}\tilde{\mu}_{h_n}(t)\Big\}+o_\pr(1).
		\end{align*}
		
		Next, for $\theta\ge0$, let $U_\theta(\Ac)$ denote the $\theta$-neighbourhood of $\Ac\subset I_n$ in $I_n$. Define 
		\begin{equation*}
		\delta_n = 2 \inf\{ \theta\geq 0 \mid \Ec_n^+ \subset U_\theta(\Ec^+) \}.
		\end{equation*} 
		We proceed by showing that $\delta_n=o(1)$ for $n\to\infty$. As $\Ec_n^+$ is a descending sequence of sets, the nonnegative sequence $\delta_n$ decreases and therefore 
		converges. Suppose $\delta_n>0$ for all $n$. Then, by definition, $\Ec_n^+\subset U_{\delta_n}(\Ec^+)$ but $\Ec_n^+\not\subset U_{\delta_n/4}(\Ec^+)$. Thus, there exists a sequence $t_n\in\Ec_n^+\subset[0,1]$ such that $|t_n-t|\geq \delta_n/4$ for all $t\in\Ec^+$ and all $n\in\N$. By compactness of $[0,1]$ and continuity of $\mu$, there is a convergent subsequence $(t_{n_k})_{k\in\N}$ with $\lim\limits_{k\to\infty}t_{n_k}=t^*\in[0,1]$ and $\mu(t^*)-g(\mu)=\lim\limits_{k\to\infty}\mu(t_{n_k})-g(\mu)=d_\infty$, thus $t^*\in\Ec^+$. Hence, $|t_{n_k}-t^*|\geq \delta_{n_k}/4$, which implies $\delta_n=o(1)$ as asserted.
		
		Since $\Ec_n^+\subset U_{\delta_n}(\Ec^+) \cap I_n$ and since $\Ec^+ \cap I_n \subset \Ec^+$, the assertion $\Rc_n^+=o_\Prob(1)$ and hence the proposition follows from
		\begin{align}
		\label{eq:rn2}
		\tfrac{\ell_n\sqrt{nh_n}}{\sigma\|K^*\|_2}\Big\{\sup_{t\in U_{\delta_n}(\Ec^+)\cap I_n}\tilde{\mu}_{h_n}(t)-\sup_{t\in\Ec^+\cap I_n}\tilde{\mu}_{h_n}(t)\Big\} =o_\Prob(1).
		\end{align}
		For the proof of \eqref{eq:rn2} observe that,
		\begin{align}\label{de1}
		 &\phantom{{}={}}\tfrac{\sqrt{nh_n}}{\sigma\|K^*\|_2}\Big(\sup_{t\in U_{\delta_n}(\Ec^+)\cap I_n}\tilde{\mu}_{h_n}(t)-\sup_{t\in\Ec^+\cap I_n}\tilde{\mu}_{h_n}(t)\Big) \\
		&= 
		\max\Big\{0, \tfrac{\sqrt{nh_n}}{\sigma\|K^*\|_2}\Big(\sup_{t\in (U_{\delta_n}(\Ec^+)\setminus \Ec^+)\cap I_n}\tilde{\mu}_{h_n}(t)-\sup_{t\in\Ec^+\cap I_n}\tilde{\mu}_{h_n}(t)\Big)\Big\}\nonumber \\
		&\leq 
		\max\Big\{0, \tfrac{\sqrt{nh_n}}{\sigma\|K^*\|_2}\Big(\sup_{t\in (U_{\delta_n}(\Ec^+)\setminus \Ec^+)\cap I_n}\{\tilde{\mu}_{h_n}(t)-\mu(t)\}-\sup_{t\in\Ec^+\cap I_n}\{\tilde{\mu}_{h_n}(t)-\mu(t)\}\Big)\Big\} \nonumber \\
		&= 
		\max\Big\{0,\sup_{t\in (U_{\delta_n}(\Ec^+)\setminus \Ec^+)\cap I_n} Z_{n,3}(t)-\sup_{t\in\Ec^+\cap I_n}Z_{n,3}(t)\Big\}+o_\pr(|\log h_n|^{-1/2}), 	\nonumber
		\end{align} 
		by  \eqref{eq:zn1zn3}, where the process $Z_{n,3}$ is defined in \eqref{hd30}.
		
		By Remark~\ref{rem:concavity} and since $\lambda(\Ec^+)>0$, $\Ec^+$ can be rewritten as $\bigcup_{j=1}^m [x_{j,1},x_{j,2}]$, for some $m\in\N$, where at least one interval has positive length. Without loss of generality, let $x_{j,1}<x_{j,2}$, for $1\leq j\leq m_1$, and $x_{j,1}=x_{j,2}$, for $m_1+1\leq j\leq m$. There exists an integer  $n_0\in\N$ such that for any $n\geq n_0$, the intervals $I_{j,1}=(x_{j,1}-\delta_n-h_n, x_{j,1}+\delta_n+h_n)$ and $I_{j,2}=(x_{j,2}-\delta_n-h_n, x_{j,2}+\delta_n+h_n)$, for $1\leq j\leq m_1$, and $I_{j,3}=(x_{j,1}-\delta_n-h_n, x_{j,2}+\delta_n+h_n)$, for $m_1+1\leq j\leq m$, are disjoint with pairwise distance greater than $2h_n$. By definition, the kernel $K^*$ vanishes outside of the interval $[-1,1]$, hence, observing the definition of $Z_{n,3}$ in \eqref{hd30}, the supremum over the set $(U_{\delta_n}(\Ec^+)\setminus \Ec^+) \cap I_n$ in \eqref{de1} can be bounded by
		\begin{equation}\label{HolgerDette}
		Y_n =  \max\big\{\max_{j=1}^{m_1} R_{j,n,1},\max_{j=1}^{m_1} R_{j,n,2},\max_{j=m_1+1}^{m} R_{j,n,3}\big\}, \end{equation}
		where the random variables
		$$
		R_{j,n,k}= \sup_{t\in I_{j,k}}\frac{1}{\|K^*\|_2 \sqrt{nh_n}}\sum_{i\in\Z} V_{i} K^*_{h_n}(\tfrac{i}{n}-t)
		$$ 
		are all independent (note that in each term the range of summation is given by $i\in \{\lceil nt-nh_n\rceil,\dots, \lfloor nt+nh_n\rfloor\}$, where 
		$t \in  I_{j,k}$). 
		
		Next, consider the second supremum on the right-hand side of \eqref{de1}, which can be bounded below by restricting the supremum to $[x_{1,1}, x_{1,2}]$, where, by assumption $x_{1,1}< x_{1,2}$. Next, one may choose  $\lfloor\tfrac{x_{1,2}-x_{1,1}}{2\delta_n+4h_n}\rfloor-2$ disjoint intervals of length $2\delta_n+2h_n$ with distance $2h_n$ contained in the interval $[x_{1,1},x_{1,2}]$, that have a distance of at least $2h_n$ to the intervals $I_{1,1}$ and $I_{1,2}$. We may hence bound the second supremum in \eqref{de1} from below by
		\[
	Y_n'=\max_{j=1}^{\lfloor\tfrac{x_{1,2}-x_{1,1}}{2\delta_n+4h_n}\rfloor-2} R'_{j,n}, 
		 \]
		where $R'_{1,n}, R'_{2,n}, \dots$ are independent with
$$
 R'_{j,n}  \stackrel{\Dc}{=}	\sup_{t\in(-\delta_n-h_n,\delta_n+h_n)}\frac{1}{\|K^*\|_2 \sqrt{nh_n}}\sum_{i\in\Z} V_{i} K^*_{h_n}(\tfrac{i}{n}-t) \stackrel{\Dc}{=} R_{j,n,k},
 $$
 and also independent from the random variables $R_{j,n,k}$.  Using the notation $Y'_{j,n}=\max_{k=1}^{m+m_1} R'_{(j-1)(m+m_1)+k,n}$ for $j=1,\ldots, \nu_n$, where
 \[
 \nu_n = \lfloor\tfrac{(x_{1,2}-x_{1,1})-2(2\delta_n+4h_n)}{(m+m_1)(2\delta_n+4h_n)}\rfloor,
 \] 
 we may write $Y'_n= \max_{j=1}^{\nu_n} Y'_{j,n}$. Note that $Y'_{1,n}, Y'_{2,n}, \dots$ are independent. Then,  using the notation \eqref{HolgerDette} 
 \begin{align*}
		&\phantom{{}={}} \pr\Big( \sup_{t\in (U_{\delta_n}(\Ec^+)\setminus \Ec^+)\cap I_n} Z_{n,3}(t) > \sup_{t\in\Ec^+\cap I_n}Z_{n,3}(t) \Big) \\
		&\le 
		\pr\Big(\max\Big\{\max_{j=1}^{m_1} R_{j,n,1},\max_{j=1}^{m_1} R_{j,n,2},\max_{j=m_1+1}^{m} R_{j,n,3}\Big\} > \max_{j=1}^{\lfloor\tfrac{x_{1,2}-x_{1,1}}{2\delta_n+4h_n}\rfloor-2} R'_{j,n}\Big)\\
		&\le 
		\pr\Big(Y_n>\max_{j=1}^{\nu_n}Y'_{j,n}\Big)\\
		&= \int_{-\infty}^\infty f_{Y_n}(y) \prod_{j=1}^{\nu_n} \pr(Y'_{j,n}<y)\diff y
		= 
		\ex\Big[F_{Y_n}^{\nu_n}(Y_n)\Big]
 		= \frac{1}{\nu_n+1}
		= \Oc(\delta_n+h_n),
		\end{align*}
		where $F_{Y_n}$ and $f_{Y_n}$ denote the cumulative distribution and the density function of $Y_n$, respectively. Here the last estimate follows since $F_{Y_n}(Y_n)$ is uniformly distributed on $[0,1]$.
		 Thus, the right-hand side of \eqref{de1} converges to 0 in probability, which implies the proposition.
	\end{proof}

\begin{proof}[Proof of   Corollary \ref{FloZirkus}.]

We only prove the case (2) - the first case follows by similar arguments.
Observe that 
\begin{align*}
&\phantom{{}={}} \pr\bigg( \hat d_{\infty, n} > (q_{0,1-\alpha}+\ell_n^2)\frac{\hat{\sigma}\|K^*\|_2}{\sqrt{nh_n}\ell_n}+\Delta \bigg)\\
&= 
\pr\bigg( \frac{\sqrt{nh_n}\ell_n}{\hat\sigma\|K^*\|_2} \big( \hat d_{\infty, n} - d_{\infty, n } \big)-\ell_n^2 
+ \frac{\sqrt{nh_n}\ell_n}{\hat\sigma\|K^*\|_2}( d_{\infty, n} - d_\infty + d_\infty - \Delta) > q_{0,1-\alpha}\bigg).
\end{align*}
In case $\lambda(\Ec)>0$, 
the right-hand side converges to $0$ for $d_\infty < \Delta$, to $1$ for $d_\infty > \Delta$, and its limit can be bounded from above by 
\[
\alpha^* =\Gum_{\log\{ \lambda(\Ec)/(x_1-x_0) \} }((q_{0,1-\alpha,\infty})) \le 
\Gum_{0}((q_{0,1-\alpha,\infty})) = \alpha
\] 
for $d_\infty=\Delta$, by Theorem \ref{thm:Dn} and \eqref{eq:estLRV}. 

For $\lambda(\Ec)=0$, observing Assumption \ref{assump:ghat}, \eqref{eq:jackKnife} and the inverse triangle  inequality, we have  
\begin{align}\label{eq:axel4}
|\hat{d}_{\infty,n}-d_{\infty,n}| 
&= \Big| \sup_{t\in I_n}|\tilde{\mu}_{h_n}(t)-\hat{g}_n|-\sup_{t\in I_n}|\mu(t)-g(\mu)| \Big| \nonumber \\ 
&\leq \sup_{t\in I_n}|\tilde{\mu}_{h_n}(t)-\mu(t)|+o_\pr(1)
= \sup_{t\in I_n}\bigg|\frac{1}{nh_n}\sum_{i=1}^{n}\sigma\big(\tfrac{i}{n}\big)V_i K_{h_n}^*\big(\tfrac{i}{n}-t\Big) \bigg| +o_\pr(1).
\end{align}
By the same arguments that led to \eqref{hd3a}, the right-hand side of the previous display is of order $o_\pr(1)$, thus, $|\hat{d}_{\infty,n}-d_{\infty,n}|=o_\pr(1)$.
Now, under the alternative,
\begin{align*}
p_n & \equiv \pr\bigg( \hat d_{\infty, n} > (q_{0,1-\alpha}+\ell_n^2)\frac{\hat{\sigma}\|K^*\|_2}{\sqrt{nh_n}\ell_n}+\Delta \bigg)\\
&= 
\pr\bigg( \frac{\sqrt{nh_n}\ell_n}{\hat\sigma\|K^*\|_2} \bigg\{ \hat d_{\infty, n} - d_{\infty, n }+ d_{\infty, n} - d_\infty + d_\infty - \Delta - \frac{\ell_n \hat{\sigma} \|K^*\|_2}{\sqrt{nh_n}}\bigg\} > q_{0,1-\alpha}\bigg),
\end{align*}
converges to $1$ since $(\hat d_{\infty, n} - d_{\infty, n }), (d_{\infty, n} - d_\infty)$ and $\ell_n(nh_n)^{-1/2}$ vanish as $n$ tends to infinity, while $d_\infty - \Delta>0$. The assertion regarding the null hypothesis follows from Theorem \ref{thm:Dn}(3). 
\end{proof}

\subsection{Proofs for Section \ref{sec4}}  \label{proofsec4} 

We first prove an auxiliary result, which
will be used in the proof of  Theorem \ref{thm:EnEstimator}.

\begin{proposition}\label{prop:conv:quot}
	Let the assumptions of Theorem \ref{thm:EnEstimator} and Assumption \ref{assump:mu} be satisfied. Then,  
	\[ 
	\frac{\lambda(\Ec_{n,1}^\pm)}{\lambda\big(\Ec_{n,2}^\pm\big)}\to 1 \qquad \text{ if } \Ec^{\pm} \ne \emptyset,
		\]
	where $\Ec_{n,1}^\pm=\Ec_n^{\pm}$ are defined in \eqref{eq:defE} and where 
	\begin{equation}\label{eq:defEn2}
	\Ec_{n,2}^{\pm}=\Big \{ t\in I_n: d_{\infty, n}  \mp d(t)\leq (1+2e_n)\rho_n \Big  \},
	\end{equation}
	with $e_n$ an arbitrary positive sequence converging to $0$.
\end{proposition}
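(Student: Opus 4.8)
The plan is to establish the ``$+$'' statement under the standing hypothesis $\Ec^+\ne\emptyset$; the ``$-$'' statement follows by the same argument with ``concave'' replaced by ``convex''. Since $\rho_n\le(1+2e_n)\rho_n$ we have $\Ec_{n,1}^+=\Ec_n^+\subseteq\Ec_{n,2}^+$, so $\lambda(\Ec_{n,1}^+)/\lambda(\Ec_{n,2}^+)\le1$, and the task is to bound this ratio below by $1-o(1)$. I would split into the cases $\lambda(\Ec^+)>0$ and $\lambda(\Ec^+)=0$.

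If $\lambda(\Ec^+)>0$, a bounded-convergence argument is enough. Since $d$ is continuous and $\lambda([x_0,x_1]\setminus I_n)\le2h_n\to0$ we have $d_{\infty,n}\to d_\infty$; moreover $d_{\infty,n}\le d_\infty$ gives $\Ec^+\cap I_n\subseteq\Ec_{n,j}^+$ for $j=1,2$, while for each fixed $t\notin\Ec^+$ we have $d_\infty-d(t)>0$, hence $d_{\infty,n}-d(t)>\rho_n$ resp.\ $>(1+2e_n)\rho_n$ for $n$ large. Thus $\id_{\Ec_{n,j}^+}\to\id_{\Ec^+}$ pointwise on $[x_0,x_1]$, and bounded convergence gives $\lambda(\Ec_{n,j}^+)\to\lambda(\Ec^+)>0$, so the ratio tends to $1$.

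Now suppose $\Ec^+\ne\emptyset$ but $\lambda(\Ec^+)=0$. By Remark~\ref{rem:concavity}(ii) the set $\Ec^+=\{t_1,\dots,t_k\}$ is then finite. Pick pairwise disjoint open neighbourhoods $W_i\ni t_i$ with $W_i\cap[x_0,x_1]\subseteq U_\gamma(t_i)$, so that $d$ is concave on $W_i\cap[x_0,x_1]$ by Assumption~\ref{assump:mu}(ii); on the compact set $[x_0,x_1]\setminus\bigcup_iW_i$, disjoint from $\Ec^+$, one has $d\le d_\infty-\delta_0$ for some $\delta_0>0$. For $n$ large (so that $(1+2e_n)\rho_n<\delta_0/2$ and $d_{\infty,n}>d_\infty-\delta_0/2$) both $\Ec_{n,1}^+$ and $\Ec_{n,2}^+$ lie in $\bigcup_i(W_i\cap I_n)$, whence $\lambda(\Ec_{n,j}^+)=\sum_i\lambda(\Ec_{n,j}^+\cap W_i)$. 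Fixing $i$ and setting $F_i(\rho)=\lambda(\{t\in W_i\cap I_n:d(t)\ge d_{\infty,n}-\rho\})$, so that $\lambda(\Ec_{n,1}^+\cap W_i)=F_i(\rho_n)$ and $\lambda(\Ec_{n,2}^+\cap W_i)=F_i((1+2e_n)\rho_n)$, the heart of the matter is the claim that $F_i$ is concave on $[0,\delta_0]$. Granting this, concavity applied to the convex combination $\rho_n=\tfrac{1}{1+2e_n}(1+2e_n)\rho_n+\tfrac{2e_n}{1+2e_n}\cdot 0$, together with $F_i(0)\ge0$, gives $F_i((1+2e_n)\rho_n)\le(1+2e_n)F_i(\rho_n)$; summing over $i$ yields $\lambda(\Ec_{n,2}^+)\le(1+2e_n)\lambda(\Ec_{n,1}^+)$, so the ratio is $\ge1/(1+2e_n)\to1$.

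To see that $F_i$ is concave, I would use that $d$ restricted to the interval $W_i\cap I_n$ is concave, so each super-level set $\{t\in W_i\cap I_n:d(t)\ge c\}$ is a sub-interval $[\alpha_i(c),\beta_i(c)]$; $d$ is strictly increasing to the left of, and strictly decreasing to the right of, the set where $d|_{W_i\cap I_n}$ is maximal — a flat piece at a strictly lower level is impossible for a concave function whose maximum is attained elsewhere, and a flat ``top'' is forced to extend to $t_i$, contradicting $\lambda(\Ec^+)=0$. Hence $\beta_i(d_{\infty,n}-\rho)$ and $\alpha_i(d_{\infty,n}-\rho)$ are obtained by composing the inverse of the (strictly monotone, concave) right, resp.\ left, branch of $d$ with the affine map $\rho\mapsto d_{\infty,n}-\rho$; since the inverse of a decreasing concave function is concave and the inverse of an increasing concave function is convex, $\rho\mapsto\beta_i(d_{\infty,n}-\rho)$ is concave and $\rho\mapsto\alpha_i(d_{\infty,n}-\rho)$ is convex, so $F_i=\beta_i(d_{\infty,n}-\cdot)-\alpha_i(d_{\infty,n}-\cdot)$ is concave. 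The configuration that needs extra care — and that I expect to be the main obstacle — is when some $t_i$ is an endpoint of $[x_0,x_1]$ truncated by $I_n$ (possible only if $x_0=0$ or $x_1=1$): then $d|_{W_i\cap I_n}$ is monotone with maximum at the truncation point. If the overall maximum over $I_n$ is attained there, $F_i$ is concave on $[0,\delta_0]$ with $F_i(0)=0$ and the previous argument applies; otherwise $F_i$ vanishes on an initial segment $[0,\tilde\rho_{n,i}]$ with $\tilde\rho_{n,i}=d_{\infty,n}-\sup_{W_i\cap I_n}d$, and one has to show the extra mass $F_i((1+2e_n)\rho_n)-F_i(\rho_n)$ is negligible against $\lambda(\Ec_{n,1}^+)$ — using $\rho_nh_n^{-2}\to\infty$ when $\lambda(\Ec)=0$ (so $\tilde\rho_{n,i}$ is of order $h_n^2\ll\rho_n$ at a stationary truncated endpoint), and, when $\lambda(\Ec)>0$, the fact that a nondegenerate interval of extremal points keeps $\lambda(\Ec_{n,1}^+)$ of strictly larger order than any truncated-endpoint contribution. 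The conceptual content (concavity of the super-level-set length function) is short; the bookkeeping around truncated endpoints is the delicate part.
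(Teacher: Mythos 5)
Your argument is correct and is, at its core, the paper's own proof: for $\lambda(\Ec^+)>0$ both measures converge to $\lambda(\Ec^+)$ (your bounded-convergence step is the paper's one-line observation), and for $\lambda(\Ec^+)=0$ the paper likewise reduces to the finitely many extremal points of Remark~\ref{rem:concavity}(ii) and exploits concavity of $d$ on $U_\gamma(t_i)$ via the secant-slope inequality $\rho_n/(t_i-x_{i,<,1})\le 2e_n\rho_n/(x_{i,<,1}-x_{i,<,2})$, which is exactly your ``concavity of the level-set measure $F_i$ plus convex combination'' step and yields the same bound $\lambda(\Ec_{n,2}^+)\le(1+2e_n)\lambda(\Ec_{n,1}^+)$. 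The boundary-truncation corner case you flag as delicate (an extremal point at $0$ or $1$ excised by $I_n$, so that $F_i$ has a convex kink where it leaves zero) is silently assumed away in the paper's proof, which works with $d_\infty$ and full two-sided neighbourhoods $(t_i-\gamma,t_i]$, $[t_i,t_i+\gamma)$ throughout, so on this point you are if anything more careful than the published argument.
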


\begin{proof} We only consider the assertion for $\Ec^{+}\ne \emptyset$.
		As both the numerator and the denominator converge from above to $\lambda(\Ec^+)$, the convergence is trivial for $\lambda(\Ec^+)>0$. In the case $\lambda(\Ec^+)=0$, by Assumption \ref{assump:mu}, there 
		exists  a positive constant $\gamma>0$ such that the function $d=\mu-g(\mu)$ is concave on $U_\gamma(\Ec^+)$.  
Let us show that $\Ec_{n,2}^+\subset U_\gamma(\Ec^+)$ for all sufficiently large $n$. First, by continuity, $d$ attains its maximum on the compact set $[0,1]\setminus U_{\gamma}(\Ec^+)$, say at a point $t'$. As $t'\notin \Ec^+$, $d(t')<d_\infty$. Next, $\sup_{t\in\Ec_{n,2}^+}d_\infty-d(t)=(1+2e_n)\rho_n$ by definition of $\Ec_{n,2}^+$ and continuity of $d$, which converges to 0 for $n\to\infty$. Thus, there is a natural number $n_0\in\N$ such that for any $n \geq n_{0}$ and any $t\in \Ec_{n,2}^+$, it holds $d(t)>d(t')$, and hence $t\notin[0,1]\setminus U_\gamma(\Ec^+)$. Therefore, it follows $\Ec_{n,2}^+\subset U_\gamma(\Ec^+)$.

	Since $\lambda(\Ec^+)=0$, we can  use part (ii) of Remark \ref{rem:concavity}  and rewrite $\Ec^+$
	 as $\{t_1,\dots,t_m\}$, for some $m\in\N$. By Assumption \ref{assump:mu},  $\mu-g(\mu)$ is strictly increasing on $U_\gamma^<(t_i):=(t_i-\gamma, t_i]$ and strictly decreasing on $U_\gamma^>(t_i):=[t_i,t_i+\gamma)$, for any $t_i\in \Ec^+$. Thus, for sufficiently large $n$, there are points $x_{i,\sim,1}>x_{i,\sim,2}\in U_\gamma^\sim(t_i)$ (depending on $n$) with $\mu(x_{i,\sim,1})-g(\mu)=d_\infty-\rho_n$ and $\mu(x_{i,\sim,2})-g(\mu)=d_\infty-(1+2e_n)\rho_n$, where the symbol $\sim$ denotes either '$<$' or '$>$'. Moreover, these numbers are unique.
	As the function $\mu$ is concave on the interval $[x_{i,<,2},t_i]$, we have 
	\[\frac{\rho_n}{t_i-x_{i,<,1}}=\frac{\mu(t_i)-\mu(x_{i,<,1})}{t_i-x_{i,<,1}}\leq \frac{\mu(x_{i,<,1})-\mu(x_{i,<,2})}{x_{i,<,1}-x_{i,<,2}}=\frac{2e_n\rho_n}{x_{i,<,1}-x_{i,<,2}}. \]
	Thus, $x_{i,<,1}-x_{i,<,2} \leq 2e_n(t_i-x_{i,<,1})$ and analogously $x_{i,>,2}-x_{i,>,1}\leq 2e_n(x_{i,>,1}-t_i)$. Since $\Ec_{n,k}^+=\bigcup_{i=1}^m (x_{i,<,k},t_i] \cup (t_i,x_{i,>,k})$, for $k=1,2$, it follows that
	\begin{align*}
	\frac{\lambda(\Ec_{n,2}^+\setminus\Ec_{n,1}^+)}{\lambda(\Ec_{n,1}^+)}&=\sum_{i=1}^m \frac{x_{i,<,1}-x_{i,<,2}+x_{i,>,2}-x_{i,>,1}}{\lambda(\Ec_{n,1}^+)}\\ 
	&\leq \sum_{i=1}^m 2e_n\frac{t_i-x_{i,<,1}+x_{i,>,1}-t_i}{\lambda(\Ec_{n,1}^+)}=2e_n =o(1),
	\end{align*} 
	which implies also $\lambda\big(\Ec_{n,2}^+\big)/\lambda\big(\Ec_{n,1}^+\big)\to 1$.
\end{proof}

\begin{proof}[Proof of Theorem \ref{thm:EnEstimator}]
First, consider the case $d_\infty>0$. Choose a positive sequence $e_n$ converging to 0 such that $\liminf_{n\to\infty} e_n\rho_n\sqrt{nh_n}/2-\ell_n >0$. We prove that
	\begin{equation}\label{eq:EnEstimatorConv} 
	\frac{\lambda(\Ec_n)}{\lambda(\hat{\Ec}_{n,2})} =1+o_\Prob(1) ,
	\end{equation}
	where $\hat{\Ec}_{n,2}=\hat\Ec_{n,2}^+\cup\hat\Ec_{n,2}^-$ and
	\[
	\hat{\Ec}_{n,2}^{\pm}=\{ t\in I_n: \hat d_{\infty, n} \mp \hat{d}_n(t)\leq (1+e_n)\rho_n \}.
	\]
	By the same arguments one may show that
	$\lambda(\Ec_n)/\lambda(\hat{\Ec}_{n,3})=1+o_\Prob(1)$,
	where $\hat{\Ec}_{n,3}$ is defined analogously to $\hat{\Ec}_{n,2}$ with $(1+e_n)\rho_n$ replaced by $(1-e_n)\rho_n$.
	As $\hat{\Ec}_{n,3}\subset \hat{\Ec}_n\subset\hat{\Ec}_{n,2}$ this leads to $\lambda(\Ec_n)/\lambda(\hat{\Ec}_n)=1+o_\Prob(1)$ by the sandwich theorem and proves the 
	assertion of the theorem.
	
	Without loss of generality, we assume that $\Ec^+\neq \emptyset$  (which implies $\lambda(\Ec_n^+)>0$) and observe that
\begin{align}\label{eq:axel2}
   \phantom{{}={}}2\sup_{t\in I_n}|\hat d_n(t) - d(t)|  \nonumber
   &\ge
	 | \hat d_{\infty, n} - d_{\infty, n} | + \sup_{t\in I_n}\big|\hat{d}_n(t)-d(t)\big| \nonumber\\
	&\geq | \hat d_{\infty, n} - d_{\infty, n} | + \sup_{t\in\Ec^+_n} \big|\hat{d}_n(t)-d(t)\big| \nonumber\\
	&\geq |\hat d_{\infty,n} - d_{\infty, n}| + \sup_{t\in\Ec^+_n}  \big| \hat{d}_n(t) - d_{\infty, n}\big| -\rho_n \nonumber \\
	&\geq \sup_{t\in\Ec^+_n} \big| \hat d_{\infty,n} -\hat{d}_n(t)\big| -\rho_n.
	\end{align}		
	Thus 
	\begin{align}\label{eq:axel3}
		\pr(\Ec^+_n\subset \hat{\Ec}^+_{n,2})
		&=  \pr\Big(\sup_{t\in\Ec^+_n} \big\{  \hat{d}_{\infty, n} - \hat d_n(t)  \big\} \le (1+e_n)\rho_n\Big)  \nonumber\\
		&=  \pr\Big(\sup_{t\in\Ec^+_n} \big|  \hat{d}_{\infty, n} - \hat d_n(t)  | - \rho_n \le e_n\rho_n\Big) \nonumber \\
		&\geq \pr\Big(2\sup_{t\in I_n}|\hat d_n(t) - d(t)| \leq e_n\rho_n\Big) \nonumber \\
		&=  \pr\Big( \sqrt{nh_n} \ell_n \sup_{t\in I_n}|\hat d_n(t) - d(t)|- \ell_n^2 \leq e_n\rho_n\ell_n\sqrt{nh_n}/2-\ell_n^2 \Big) 
	\end{align}	
	which converges to unity by Remark~\ref{stationary}(ii), by Assumption~\ref{assump:ghat} and since $e_n\rho_n\ell_n\sqrt{nh_n}/2-\ell^2_n\to\infty$ by the choice of $e_n$. Note that 
	\[
	\sup_{t\in \hat{\Ec}^+_{n,2}}|\hat d_{\infty,n}-\hat{d}_n(t)|=(1+e_n)\rho_n,
	\] 
	thus, analogously to \eqref{eq:axel2}, 
	\begin{align*}
	&\phantom{{}={}} \sup_{t\in \hat{\Ec}^+_{n,2}}|d_{\infty,n}-d(t)|-(1+e_n)\rho_n \\ 
	& \le | d_{\infty, n} - \hat d_{\infty, n}| + \sup_{t\in \hat{\Ec}^+_{n,2}} |\hat d_{\infty, n} - \hat d_n(t) | +  \sup_{t\in \hat{\Ec}^+_{n,2}}  |\hat d_n(t) - d(t)| - (1+e_n)\rho_n  \\
	&\le 2\sup_{t\in I_n}|\hat d_n(t) - d(t)|.
	\end{align*}
	Therefore, as in \eqref{eq:axel3}, $\pr(\hat{\Ec}^+_{n,2}\subset \Ec^+_{n,2})\to 1$, where the set $\Ec^+_{n,2}$ is defined in \eqref{eq:defEn2}. In particular,
	\[
	\Ec^+_n \subset \hat{\Ec}^+_{n,2}\subset \Ec^+_{n,2},
	\]
	with probability converging to one. Hence, 
	\begin{equation} \label{eq:EnEstimator}
	0 <  \lambda(\Ec^+_n)  \leq \lambda(\hat{\Ec}^+_{n,2}) \leq \lambda(\Ec^+_{n,2}), 
	\end{equation}
	with probability converging to one and
	\[ 
	\pr\bigg( 1\leq \frac{\lambda(\hat{\Ec}^+_{n,2})}{\lambda(\Ec^+_n)} \leq z_n\bigg)\xrightarrow{n\to\infty} 1, 
	\]
	where the deterministic sequence $z_n=\lambda(\Ec^+_{n,2})/\lambda(\Ec^+_n)$ converges to $1$, by Proposition \ref{prop:conv:quot}. Let $A_n$ denote the event that \eqref{eq:EnEstimator} holds. Then, $\pr(A_n^C)\to 0$ and, for any $\eps>0$,
	\begin{align*}
	\pr\bigg(\bigg| \frac{\lambda(\hat{\Ec}^+_{n,2})}{\lambda(\Ec^+_n)}-1 \bigg|\geq \eps\bigg)
	&= 
	\pr\bigg(\bigg| \frac{\lambda(\hat{\Ec}^+_{n,2})}{\lambda(\Ec^+_n)}-1 \bigg|\geq \eps \cap A_n\bigg)
	+ \pr\bigg(\bigg| \frac{\lambda(\hat{\Ec}^+_{n,2})}{\lambda(\Ec^+_n)}-1 \bigg|\geq \eps \cap A_n^C\bigg)\\
	&\leq \pr\big(|z_n-1|\geq \eps\big)+\pr(A_n^C)
	\to 0.
	\end{align*}
	Therefore, it follows 
	\[\frac{\lambda(\Ec^+_n)}{\lambda(\hat{\Ec}^+_{n,2})}=1+o_\Prob(1).\]
	If $\Ec^-\neq \emptyset$, it holds analogously
	\[\frac{\lambda(\Ec^-_n)}{\lambda(\hat{\Ec}^-_{n,2})} =1+o_\Prob(1). \] 
Note that, since $d_\infty>0$, $\hat{\Ec}^+_{n,2}$ and $\hat{\Ec}^-_{n,2}$ with probability converging to one. Thus, the previous displays imply
	\begin{align*}
		\frac{\lambda(\Ec_n)}{\lambda(\hat{\Ec}_{n,2})} 
		&= 
		\frac{\lambda(\Ec^+_n)}{\lambda(\hat{\Ec}^+_{n,2})} \frac{\lambda(\hat{\Ec}^+_{n,2})}{\lambda(\hat{\Ec}_{n,2})} + \frac{\lambda(\Ec^-_n)}{\lambda(\hat{\Ec}^-_{n,2})} \frac{\lambda(\hat{\Ec}^-_{n,2})}{\lambda(\hat{\Ec}_{n,2})}  \\
	&= 
	\{ 1+o_\Prob(1) \} \Big\{ \frac{\lambda(\hat{\Ec}^+_{n,2})}{\lambda(\hat{\Ec}_{n,2})} + \frac{\lambda(\hat{\Ec}^-_{n,2})}  {\lambda(\hat{\Ec}_{n,2})} \Big\} 
	= 
	1+o_\Prob(1).
	\end{align*}
	Conversely, if $\Ec^-= \emptyset$, both $\Ec_n^-$ and $\Ec^-_{n,2}$ are  empty for almost every $n\in\N$. Further, $\pr(\hat{\Ec}^-_{n,2}\subset \Ec^-_{n,2})\to 1$, which implies
	\[ 
	\frac{\lambda(\hat{\Ec}_{n,2})}{\lambda(\Ec_n)}
	=
	\frac{\lambda(\hat{\Ec}_{n,2}^+)}{\lambda(\Ec_n^+)} + \frac{\lambda(\hat{\Ec}_{n,2}^-)}{\lambda(\Ec_n^+)} - \frac{\lambda(\hat{\Ec}_{n,2}^+ \cap \hat{\Ec}_{n,2}^-)}{\lambda(\Ec_n^+)} =1+o_\Prob(1)
	, \]
	and this is equivalent to \eqref{eq:EnEstimatorConv}.

Finally, if $d_\infty=0$, observe that $\Ec_n=I_n$ and
			\[
		\hat{\Ec}_{n,2}=\Big\{ t\in I_n: \Big|\sup_{s\in I_n}|\hat d_{\infty, n}(s)-d(s)|-\big(\hat{d}_n(t)-d(t)\big)\Big|\leq (1+e_n)\rho_n \Big\}.
		\]	
		By definition $\hat \Ec_{n,2}\subset I_n$, and further $$\pr(I_n\subset \hat{\Ec}_{n,2})\ge \pr(2\sup_{t\in I_n}|\hat{d}_n(t)-d(t)|\leq (1+e_n)\rho_n),$$ hence, the theorem's assertion follows by  similar arguments as in the case $d_\infty>0$.
\end{proof}

The proof of Theorem \ref{conv:DnEn} will be based on the following auxiliary result.

\begin{proposition}\label{prop:BlowingUp}
	Let Assumptions \ref{assump:kernel} and \ref{assump:mu} be satisfied, $\rho_n\to 0$ and $h_n\to 0$. Additionally, if $\lambda(\Ec)=0$ assume that $\rho_nh_n^{-2}\to\infty$. Then, $(h_n^{-1}\Ec_n)_n$ satisfies the blowing up property from \cite{piterbarg2012}:
	\begin{compactenum}[(i)] 
		\item $h_n^{-1}\lambda(\Ec_n)=\lambda(h_n^{-1}\Ec_n)\to \infty$.
		\item For any $R>1$, 
		\[\lambda(U_R(h_n^{-1}\Ec_n)\setminus (h_n^{-1}\Ec_n) )\leq 2R |\partial (h_n^{-1}\Ec_n)|,\]
		 where $|\Ac|$ and $\partial \Ac$ denote the cardinality and the boundary of a set $\Ac$.
		\item 
		For any $\alpha\in (0,1)$, there exists a constant $L=L_\alpha>0$ such that $|\partial (h_n^{-1}\Ec_n)| \leq L \big(\lambda(h_n^{-1}\Ec_n)\big)^\alpha$. 
	\end{compactenum}
\end{proposition}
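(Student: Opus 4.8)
The plan is to reduce everything to one structural fact: for all sufficiently large $n$, the set $\Ec_n$ is a finite union of at most $M$ pairwise disjoint closed intervals (degenerate ones, i.e.\ single points, allowed), with $M$ a constant independent of $n$. Indeed, recall from Remark~\ref{rem:concavity}(ii) that under Assumption~\ref{assump:mu} the set $\Ec=\Ec^+\cup\Ec^-$ is a union of at most $N:=\lfloor(2\gamma)^{-1}\rfloor$ intervals and points; write $\Ec^+=\bigcup_j[a_j,b_j]$ for its (possibly degenerate) components, and similarly for $\Ec^-$. Since $d$ is continuous and $\{d=d_\infty\}=\Ec^+$, the quantity $d^+:=\max_{t\in[x_0,x_1]\setminus U_\gamma(\Ec^+)}d(t)$ satisfies $d^+<d_\infty$; because $d_{\infty,n}=\sup_{I_n}|d|\to d_\infty$ and $\rho_n\to0$, we have $d_{\infty,n}-\rho_n>d^+$ for $n$ large, hence $\Ec_n^+\subset U_\gamma(\Ec^+)\cap I_n$ and, symmetrically, $\Ec_n^-\subset U_\gamma(\Ec^-)\cap I_n$. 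On each component neighbourhood $(a_j-\gamma,b_j+\gamma)=\bigcup_{t\in[a_j,b_j]}U_\gamma(t)$, Assumption~\ref{assump:mu}(i)--(ii) gives $d''\le0$, so $d$ is concave there, and the superlevel set $\{t:d(t)\ge d_{\infty,n}-\rho_n\}$ restricted to that interval is again an interval (containing $[a_j,b_j]$); intersecting with $I_n$ and taking the union over the $\le N$ components shows $\Ec_n^+$ is a union of at most $N$ intervals, and likewise $\Ec_n^-$ using convexity. Thus $\Ec_n$ has at most $M:=2N$ components, so $|\partial\Ec_n|\le2M$; since cardinality of a finite set is scale invariant, $|\partial(h_n^{-1}\Ec_n)|\le2M$ for $n$ large (and is trivially finite for every $n$).

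For (i) note $\lambda(h_n^{-1}\Ec_n)=h_n^{-1}\lambda(\Ec_n)$. If $\lambda(\Ec)>0$, then $\Ec\cap I_n\subset\Ec_n$ (because $d(t)=\pm d_\infty$ there forces $d_{\infty,n}\mp d(t)\le0\le\rho_n$) and $\lambda(\Ec\cap I_n)\to\lambda(\Ec)>0$ by continuity of measure, so $h_n^{-1}\lambda(\Ec_n)\to\infty$. If $\lambda(\Ec)=0$, then necessarily $d_\infty>0$ and $\Ec^+,\Ec^-$ are finite point sets; fix an extremal point $t^\ast\in\Ec^+$, which is an interior local maximum of the twice differentiable $d$, so $d'(t^\ast)=0$, and Taylor's theorem together with $\|d''\|_\infty<\infty$ (Assumption~\ref{assump:mu}(i)) gives $0\le d_\infty-d(t)\le\tfrac12\|d''\|_\infty(t-t^\ast)^2$ on a fixed neighbourhood of $t^\ast$. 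Hence for $n$ large $\Ec_n^+$ contains an interval of length $\gtrsim\sqrt{\rho_n}$, so $h_n^{-1}\lambda(\Ec_n)\gtrsim\sqrt{\rho_n}/h_n=\sqrt{\rho_n h_n^{-2}}\to\infty$ by the additional hypothesis — this is exactly where $\rho_nh_n^{-2}\to\infty$ is used.

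Assertion (ii) is then an elementary fact about a finite union $A=\bigsqcup_kA_k$ of disjoint closed intervals and points in $\R$: $U_R(A)\setminus A\subset\bigcup_k(U_R(A_k)\setminus A_k)$, each $U_R(A_k)\setminus A_k$ has Lebesgue measure at most $2R\le2R\,|\partial A_k|$, and since the components are disjoint $|\partial A|=\sum_k|\partial A_k|$; summing yields $\lambda(U_R(A)\setminus A)\le2R|\partial A|$, which applied to $A=h_n^{-1}\Ec_n$ is (ii). For (iii), the structural fact gives the bounded numerator $|\partial(h_n^{-1}\Ec_n)|\le2M$, while $\lambda(h_n^{-1}\Ec_n)\to\infty$ by (i); hence for any $\alpha\in(0,1)$ there is $n_0$ with $2M\le(\lambda(h_n^{-1}\Ec_n))^\alpha$ for $n\ge n_0$, and since $\lambda(\Ec_n)>0$ for every $n$ (as $d$ is continuous and attains $\pm d_{\infty,n}$ on the interval $I_n$, a one-sided positive-measure neighbourhood of that point lies in $\Ec_n$), the constant $L_\alpha:=\max\{1,\max_{n<n_0}2M(\lambda(h_n^{-1}\Ec_n))^{-\alpha}\}$ works for all $n$.

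The main obstacle is the structural fact, specifically the uniform-in-$n$ bound on the number of connected components of $\Ec_n$: one must at the same time rule out spurious components far from $\Ec$ (handled by $\rho_n\to0$ together with the strict gap $d^+<d_\infty$) and spurious components inside the neighbourhoods $U_\gamma(\Ec^\pm)$ (handled by the concavity/convexity of Assumption~\ref{assump:mu}(ii), via $d''\le0$ on each $(a_j-\gamma,b_j+\gamma)$). Minor care is also needed to work with $d_{\infty,n}$ rather than $d_\infty$ and to absorb boundary effects when $\Ec$ meets $\partial[x_0,x_1]$, where the relevant local pieces of $\Ec_n$ are one-sided; this affects neither the component count nor the volume estimates above.
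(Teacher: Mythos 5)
Your proposal is correct and follows essentially the same route as the paper's proof: part (i) via the direct inclusion $\Ec\cap I_n\subset\Ec_n$ when $\lambda(\Ec)>0$ and a second-order Taylor expansion at an extremal point giving an interval of length of order $\sqrt{\rho_n}$ (where $\rho_n h_n^{-2}\to\infty$ enters) when $\lambda(\Ec)=0$; part (ii) via the elementary ``$2R$ per boundary point'' covering bound; and part (iii) via $\Ec_n\subset U_\gamma(\Ec)$ for large $n$ together with the concavity/convexity of Assumption~\ref{assump:mu}(ii) and Remark~\ref{rem:concavity}(ii) to bound the number of components of $\Ec_n$ uniformly in $n$. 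The only cosmetic difference is that you establish the interval-decomposition of $\Ec_n$ up front and route (ii) through it, whereas the paper proves (ii) for a general set with finite boundary via an intermediate-value argument; both are valid.
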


\begin{proof} 
	(i) If $\lambda(\Ec)>0$, then $\lambda(h_n^{-1}\Ec_n)\geq h_n^{-1}\lambda(\Ec)\to\infty$. If $\lambda(\Ec)=0$, we assume without loss of generality that $\Ec^+\neq \emptyset$ and define 
	$c_n =(\frac{\rho_n}{\|\mu''\|_\infty})^{1/2}$. Then for  
	$t^*\in\Ec^+$ and $t\in U_{c_n}(t^*)$, it follows
	\[ d_\infty-d(t) =\mu(t^*)-\mu(t)\leq \|\mu''\|_\infty|t^*-t|^2 \leq \rho_n, \]
	by Taylor's theorem and $\mu'(t^*)=0$. Thus, $U_{c_n}(\Ec^+) \cap I_n \subset \Ec_n^+$. In particular, $\lambda(\Ec_n^+) \ge \lambda(U_{c_n}(\Ec^+) \cap I_n) \geq 2( c_n -  h_n)$. The assertion follows from $\sqrt{\rho_n}h_n^{-1}\to\infty$.
		
		\noindent
	(ii) For $t\in U_R(h_n^{-1}\Ec_n)\setminus (h_n^{-1}\Ec_n)$, there is an $s\in h_n^{-1}\Ec_n$ with distance $|s-t|<R$. Therefore, there is an intermediate value $u \in [s\wedge t, s\vee t]$ in $\partial( h_n^{-1}\Ec_n)$ such that $t\in U_R(u)$. Thus, 
	\[ 
	\lambda\Big(U_R(h_n^{-1}\Ec_n)\setminus h_n^{-1}\Ec_n \Big) \leq \lambda\Big(\bigcup_{u\in\partial (h_n^{-1}\Ec_n)}U_R(u) \Big) \leq 2R|\partial (h_n^{-1}\Ec_n)|, 
	\]
	and (ii) follows. 
	
	\noindent
	(iii) 	
	By Remark \ref{rem:concavity} (ii), the set of extremal points can be represented as $\Ec=\bigcup_{i=1}^m [x_{i,1},x_{i,2}]$, for some $m\in\N$.
	Further, there exists an integer $n_0\in\N$, such that for any $n\geq n_0$, $\Ec_n\subset U_\gamma(\Ec)$, with $\gamma$ from Assumption~\ref{assump:mu}. By concavity of $\mu$ on $U_\gamma([x_{i,1},x_{i,2}])$, for $[x_{i,1},x_{i,2}]\subset\Ec^+$ and convexity on $U_\gamma([x_{i,1},x_{i,2}])$, for $[x_{i,1},x_{i,2}]\subset\Ec^-$, there are real numbers $x_{i,1,n}\leq x_{i,1} \leq x_{i,2}\leq x_{i,2,n}$ such that $\Ec_n=\bigcup_{i=1}^m [x_{i,1,n},x_{i,2,n}]$. 
Hence, $ |\partial (h_n^{-1}\Ec_n)|\le2m$, which implies the assertion by (i).	
\end{proof}

\begin{proof}[Proof of Theorem \ref{conv:DnEn}.] If $d_\infty=0$, it follows that $\Ec_n=I_n$ and further $\ell_n(\Ec_n)=\ell_n$. Thus, the theorem's statement follows from the beginning of the proof of Theorem \ref{thm:Dn}(2). 

Now, let $d_\infty>0$.
	 	First observe that with the notation 
	 	\begin{equation*}
		D_n^\Ec(\Ac)=\ell_n(\Ec_n) \frac{\sqrt{nh_n}}{\sigma\|K^*\|_2}\Big\{\sup_{t\in\Ac}|\hat{d}_n(t)|- d_{\infty,n}\Big\} - \ell_n^2(\Ec_n), \end{equation*}
	 	for $\Ac\subset I_n$, we have 
		\[
		 D_n^\Ec(I_n)  = \tfrac{\ell_n(\Ec_n)\sqrt{nh_n}}{\sigma\|K^*\|_2} \big( \hat d_{\infty, n} - d_{\infty, n}\big)-\ell_n^2(\Ec_n) =D_n^\Ec(I_n\setminus \Ec_n)\vee D_n^\Ec(\Ec_n).
		\] 
		Let us first consider the case $\Ec_n \ne I_n$.
	 	Then, by the definition of $\Ec_n=\Ec_n^+ \cup \Ec_n^-$ in \eqref{eq:defE},
	 	\begin{align*}
	 	&\phantom{{}={}}D_n^\Ec\big(I_n\setminus \Ec_n\big)\\
	 	&\leq \ell_n(\Ec_n)\frac{\sqrt{nh_n}}{\sigma\|K^*\|_2} \Big\{\sup_{t\in I_n\setminus\Ec_n}\{|\hat{d}_n(t)-d(t)|+|d(t)|\}-d_{\infty,n} \Big\}-\ell_n^2(\Ec_n)\\
	 	&\leq \ell_n(\Ec_n)\frac{\sqrt{nh_n}}{\sigma\|K^*\|_2} \Big\{\sup_{t\in I_n}|\hat{d}_n(t)-d(t)|-\rho_n \Big\}-\ell_n^2(\Ec_n)\\
	 	&=\frac{\ell_n(\Ec_n)}{\ell_n}\bigg(\ell_n\frac{\sqrt{nh_n}}{\sigma\|K^*\|_2}\sup_{t\in I_n}|\hat{d}_n(t)-d(t)| - \ell_n^2\bigg) -\ell_n(\Ec_n)\bigg(\frac{\sqrt{nh_n}}{\sigma\|K^*\|_2}\rho_n-\ell_n+\ell_n(\Ec_n)\bigg)  ,
	 	\end{align*}
	 	which diverges to $-\infty$ by Assumption  \ref{assump:ghat} and  Remark \ref{stationary}(ii), and since $\sqrt{nh_n}\rho_n/\ell_n \to \infty$ by assumption and $\ell_n(\Ec_n)\to\infty$ by Proposition \ref{prop:BlowingUp}(i). Thus,
	 	\begin{equation*}
	 	D_n^\Ec(I_n) = D_n^\Ec(\Ec_n)+o_\pr(1),
	 	\end{equation*}
		and the same assertion is obviously true if  $\Ec_n=I_n$.

	 	 If $\Ec^+\neq \emptyset$, recall the definition of $Z_{n,3}$ in \eqref{hd30} and observe that analogously to \eqref{eq:muHat}, and by  \eqref{eq:zn1zn3} and Assumption~\ref{assump:ghat}, 
	 	\begin{align*}
	 	D_n^\Ec(\Ec^+_n)
	 	&= \ell_n(\Ec_n)\frac{\sqrt{nh_n}}{\sigma\|K^*\|_2} \Big\{\sup_{t\in\Ec^+_n}|\hat{d}_n(t)|- d_{\infty,n} \Big\}-\ell_n^2(\Ec_n)\\
	 	&= \ell_n(\Ec_n)\frac{\sqrt{nh_n}}{\sigma\|K^*\|_2} \Big\{\sup_{t\in\Ec^+_n}\hat{d}_n(t)- d_{\infty,n}   \Big\}-\ell_n^2(\Ec_n)+o_\pr(1)\\
	 	&\leq \ell_n(\Ec_n)\frac{\sqrt{nh_n}}{\sigma\|K^*\|_2}\sup_{t\in\Ec^+_n} \big\{\hat{d}_n(t)-d(t) \big\}-\ell_n^2(\Ec_n) +o_\pr(1)\\
	 	&= \ell_n(\Ec_n)\sup_{t\in\Ec^+_n}Z_{n,3}(t)-\ell_n^2(\Ec_n) +o_\pr(1).
	 	\end{align*}
	 	Analogously, if $\Ec^-\neq\emptyset$, we have
	 	\begin{align*}
	 	D_n^\Ec(\Ec^-_n)
	 	&\leq \ell_n(\Ec_n)\sup_{t\in\Ec^-_n}-Z_{n,3}(t)-\ell_n^2(\Ec_n)  +o_\pr(1).
	 	\end{align*}
	 	Thus, since 
		\[
		(\sup_{t\in\Ec_n^+}Z_{n,3}(t), \sup_{t\in\Ec^-_n}-Z_{n,3}(t)) \stackrel{\Dc}{=} (\sup_{t\in\Ec_n^+}Z_{n,3}(t), \sup_{t\in\Ec^-_n}Z_{n,3}(t))
		\]
		 as $\Ec_n^+$ and $\Ec_n^-$ are disjoint with distance larger than $h_n$, we have
		\begin{align*}
		D_n^\Ec(I_n) 
		&=
		D_n^\Ec(\Ec_n) + o_\Prob(1) = D_n^\Ec(\Ec^-_n) \vee D_n^\Ec(\Ec^+_n) + o_\Prob(1) \\ 
		& \ll_S  
		\ell_n(\Ec_n)\sup_{t\in\Ec_n}Z_{n,3}(t)-\ell_n^2(\Ec_n)  + o_\Prob(1) 
		=
		G_{n,1} + o_\Prob(1).
		\end{align*}
	 	By Proposition \ref{prop:BlowingUp}, $h_n^{-1}\Ec_n$ satisfies the blowing up property. Thus, by Remark \ref{stationary}(i) and (ii), we obtain that $G_{n,1} \weak \Gum_0$ as asserted. The fact that we  may replace $\ll_S$ by $\stackrel{\Dc}{=}$ in the previous display if $\lambda(\Ec)>0$  follows by the same arguments as given in the proof of Theorem \ref{thm:Dn}.
\end{proof}

\begin{proof}[Proof of Corollary \ref{test2}]
We only consider the case $\Delta >0$ (the proof $\Delta =0$ follows by similar arguments). 
Recall that $\hat{\sigma}^2-\sigma^2 = \Oc_\pr(n^{-1/3})$ by \eqref{eq:estLRV}. By Theorem \ref{thm:EnEstimator} and similar calculations as in \eqref{eq:lnEln-ln2} we obtain 
\[
\frac{\ell_n(\hat{\Ec}_n)}{\ell_n(\Ec_n)}\convp 1\quad\text{ and }\quad\ell_n(\hat{\Ec}_n)\ell_n(\Ec_n)-\ell_n^2(\hat\Ec_n) \convp 0.
\] 
Together with Theorem~\ref{conv:DnEn} this yields
\[
\liminf_{n\to \infty}\pr\Big(\tfrac{\ell_n(\hat\Ec_n)\sqrt{nh_n}}{\hat\sigma\|K^*\|_2} \big( \hat d_{\infty, n} - d_{\infty, n}\big)-\ell_n^2(\hat\Ec_n)\leq x\Big) 
\geq 
\begin{cases} 
	\Gum_0((-\infty, x]) &, d_\infty>0, \\
	\Gum_{\log 2}((-\infty, x]) &, d_\infty=0. \\
\end{cases}
\]
Moreover,  we have $|\hat d_{\infty, n} - d_{\infty, n}| = o_\Prob(1)$, which follows from Theorem~\ref{thm:Dn} for $\lambda(\Ec)>0$ and from \eqref{eq:axel4} for $\lambda(\Ec)=0$.
The derived convergences imply the corollary by similar arguments as in the proof of Corollary~\ref{FloZirkus}. 
\end{proof}

\subsection{Proofs for Section \ref{sec5}}

\begin{proof}[Proof of Theorem \ref{thm:estFirstExceedance}]
	Define $\gamma_n=\big(2 \sigma \|K^*\|_2\tfrac{\ell_n }{\sqrt{nh_n}} +\delta_n\big)^{1/\kappa}$ and let  $c_1 = (2/c_\kappa)^{1/\kappa}$, where $\kappa$ and $c_\kappa$ are as in \eqref{eq:smoothnessMu}. 	 First, consider the case  $t^*=\infty$. Then $\Delta- d_{\infty, n} \ge \Delta - d_\infty > 0$ for all $n\in\N$. Hence, since $|\hat{d}_{\infty,n}-d_{\infty,n}|=o_\pr(1)$ (this follows from Theorem~\ref{thm:Dn} for $\lambda(\Ec)>0$ and from \eqref{eq:axel4} for $\lambda(\Ec)=0$), we obtain
	\begin{align*}
	\pr(\hat{t}^*<t^*)
	&= \pr(\hat d_{\infty, n} \geq \Delta -  \delta_n) 
	= \pr(\hat d_{\infty, n} - d_{\infty,n} + \delta_n \geq \Delta - d_{\infty, n} ) =o_\Prob(1).
	\end{align*}

Next, consider the case  $t^*<\infty$. The assertion follows from
	\begin{align}\label{eq:estFirstDev1}
	\pr(\hat{t}^*-t^*<-c_1\gamma_n)&=o(1),\\
	\pr(\hat{t}^*-t^*>h_n)&=o(1).\label{eq:estFirstDev2}
	\end{align}

	For the proof of \eqref{eq:estFirstDev1} note that it follows from \eqref{eq:smoothnessMu} that $|d(t^*)-d(s)|-c_\kappa (t^*-s)^\kappa = o(|t^*-s|^\kappa)$.  A careful case-by-case study 
of the absolute value $|d(t^*)-d(t^*-c_1\gamma_n)|$, depending on whether $d(t^*)=\Delta$ or $d(t^*)=-\Delta$,
then implies
	\[
	|d(t^*-c_1\gamma_n)|=\Delta-c_\kappa (c_1\gamma_n)^\kappa + o(\gamma_n^\kappa) = \Delta - 2 \gamma_n^\kappa+o(\gamma_n^\kappa). 
,
	\] 
	as $\gamma_n\to 0$ by assumption. Thus, by continuity of $d$, compactness of $[h_n\vee x_0,t^*-c_1\gamma_n]$ and Remark \ref{rem:concavity}, it follows for all sufficiently large $n$ that
	\begin{equation*} 
	\max_{s\in[h_n\vee x_0,t^*-c_1\gamma_n]}|d(s)| =
	|d(t^*-c_1\gamma_n)| \le \Delta -  \gamma_n^\kappa.
	\end{equation*} 
	This implies, by the definition of $\hat t^*$, Assumption \ref{assump:ghat} and Remark \ref{stationary} (ii), 
	\begin{align*}
	&\phantom{{}={}} \pr\big(\hat{t}^*<t^*-c_1\gamma_n\big) \\
	&\leq \pr\big(|\hat{d}_n(s)|\geq \Delta-\delta_n~\text{for some}~s \in [h_n\vee x_0, t^*-c_1\gamma_n)\big)\\
	&\leq \pr\big(|d(s)|+|\hat{d}_n(s)-d(s)|\geq \Delta-\delta_n~\text{for some}~s \in [h_n\vee x_0, t^*-c_1\gamma_n)\big)\\
	&\leq \pr\Big(|\hat{d}_n(s)-d(s)|-\gamma_n^\kappa\geq -\delta_n~\text{for some}~s \in [h_n\vee x_0, t^*-c_1\gamma_n)\Big)\\
	&\leq \pr\Big(\tfrac{\ell_n\sqrt{nh_n}}{\sigma \|K^*\|_2} \sup_{s\in I_n}|\hat{d}_n(s)-d(s)|-\ell_n^2 \geq \ell_n\Big(\tfrac{\sqrt{nh_n}( \gamma_n^\kappa-\delta_n)}{\sigma \|K^*\|_2}-\ell_n\Big)\Big)
	\to 0.
	\end{align*}
	For \eqref{eq:estFirstDev2}, observe that similarly,
	\begin{align*}
	&\phantom{{}={}} \pr(\hat{t}^*>t^*+h_n)\\
	&\leq \pr\Big(\max_{t\in[h_n\vee x_0,s]}|\hat{d}_n(t)|<\Delta-\delta_n~\text{for some}~s\geq t^*\Big)\\
	&\leq \pr\Big(\max_{t\in[h_n\vee x_0,s]}|d(t)|-\max_{t\in[h_n\vee x_0,s]}|\hat{d}_n(t)-d(t)|<\Delta-\delta_n~\text{for some}~s\geq t^*\Big)\\
	&\leq \pr\Big(\ell_n\big(\tfrac{\sqrt{nh_n}\delta_n}{\sigma \|K^*\|_2}-\ell_n\big)<\tfrac{\ell_n\sqrt{nh_n}}{\sigma \|K^*\|_2}\sup_{t\in I_n}|\hat{d}_n(t)-d(t)|-\ell_n^2\Big),
	\end{align*}
	which vanishes as the left-hand side in the latter probability diverges to infinity by assumption and the right-hand side is bounded in probability by Remark \ref{stationary}(ii) and Assumption~\ref{assump:ghat}.
\end{proof}

\subsection{Proofs for Section \ref{sec6}}

\begin{proof}[Proof of Theorem \ref{locstat}]
	The convergence rate in the theorem is an improved version of Theorem 4.4 of \cite{dettewu2019} and for the sake of brevity we only state the main idea of the proof. Observe that equation (1.1) in \cite{Fan2003} generalises Burkholder's inequality from quadratic variation to the general case of $p$-variation. Using this generalised version of Burkholder's inequality, the inequalities in Theorem 1 of \cite{Wu2007} are valid not only for $q'=\min(2,q)$, but for $q$ itself. Under Assumption \ref{assump:errorLS}, Theorem~1 of \cite{Wu2007} can now be applied with $q=4$. Thus, the convergence rate in Lemma~3 of \cite{ZhouWu2010} improves to $\Oc\big(m^{1/4}(n\tau_n)^{-1/2}\big)$, which finally allows us to derive the improved convergence of Theorem 4.4 of \cite{dettewu2019} as stated in the theorem.
\end{proof}

\begin{proof}[Proof of Theorem \ref{thm:convLS}]
The proof follows by same arguments  as given in Section \ref{proofsec4}, where one uses Theorem \ref{locstat} instead of \eqref{eq:estLRV} and Theorem \ref{a1} instead of Remark \ref{stationary}. The details are omitted for the sake of brevity.
\end{proof}

\end{appendix}

\section*{Acknowledgements}
This work has been supported in part by the
Collaborative Research Center ``Statistical modeling of nonlinear
dynamic processes'' (SFB 823, Project A1, A7,  C1) of the German Research Foundation (DFG).

\bibliographystyle{chicago}

\bibliography{bibliography}

\begin{thebibliography}{}

\bibitem[\protect\citeauthoryear{Aitchison}{Aitchison}{1964}]{aitchison1964}
Aitchison, J. (1964).
\newblock Confidence-region tests.
\newblock {\em Journal of the Royal Statistical Society, Ser. B\/}~{\em 26},
  462--476.

\bibitem[\protect\citeauthoryear{Aston and Kirch}{Aston and
  Kirch}{2012}]{Aston2012}
Aston, J. A.~D. and C.~Kirch (2012).
\newblock Evaluating stationarity via change-point alternatives with
  applications to fmri data.
\newblock {\em Ann. Appl. Stat.\/}~{\em 6\/}(4), 1906--1948.

\bibitem[\protect\citeauthoryear{Aue and Horv{\'{a}}th}{Aue and
  Horv{\'{a}}th}{2013}]{auehor2013}
Aue, A. and L.~Horv{\'{a}}th (2013).
\newblock Structural breaks in time series.
\newblock {\em Journal of Time Series Analysis\/}~{\em 34\/}(1), 1--16.

\bibitem[\protect\citeauthoryear{Baranowski, Chen, and Fryzlewicz}{Baranowski
  et~al.}{2019}]{baranowski2019}
Baranowski, R., Y.~Chen, and P.~Fryzlewicz (2019).
\newblock Narrowest-over-threshold detection of multiple change-points and
  change-point-like features.
\newblock {\em Journal of the Royal Statistical Society, Ser. B\/}~{\em 81},
  649--672.

\bibitem[\protect\citeauthoryear{Cao, Liu, and Zhou}{Cao
  et~al.}{2018}]{cao2018}
Cao, H., W.~Liu, and Z.~Zhou (2018).
\newblock Simultaneous nonparametric regression analysis of sparse longitudinal
  data.
\newblock {\em Bernoulli\/}~{\em 24\/}(4A), 3013--3038.

\bibitem[\protect\citeauthoryear{Chakraborti and Graham}{Chakraborti and
  Graham}{2019}]{ChakrabortiGraham2019}
Chakraborti, S. and M.~A. Graham (2019).
\newblock Nonparametric (distribution-free) control charts: An updated overview
  and some results.
\newblock {\em Quality Engineering\/}~{\em 31\/}(4), 523--544.

\bibitem[\protect\citeauthoryear{Champ and Woodall}{Champ and
  Woodall}{1987}]{champwood1987}
Champ, C.~W. and W.~H. Woodall (1987).
\newblock Exact results for {S}hewhart control charts with supplementary runs
  rules.
\newblock {\em Technometrics\/}~{\em 29\/}(4), 393--399.

\bibitem[\protect\citeauthoryear{Cho and Fryzlewicz}{Cho and
  Fryzlewicz}{2015}]{Cho2015}
Cho, H. and P.~Fryzlewicz (2015).
\newblock Multiple-change-point detection for high dimensional time series via
  sparsified binary segmentation.
\newblock {\em Journal of the Royal Statistical Society: Series B (Statistical
  Methodology)\/}~{\em 77\/}(2), 475--507.

\bibitem[\protect\citeauthoryear{Collins, Della-Marta, Plummer, and
  Trewin}{Collins et~al.}{2000}]{collins2000}
Collins, D., P.~Della-Marta, N.~Plummer, and B.~Trewin (2000).
\newblock Trends in annual frequencies of extreme temperature events in
  australia.
\newblock {\em Australian Meteorological Magazine\/}~{\em 49\/}(4), 277--292.

\bibitem[\protect\citeauthoryear{Dette, Sch{\"{u}}ler, and Vetter}{Dette
  et~al.}{2018}]{detschvet2018}
Dette, H., T.~Sch{\"{u}}ler, and M.~Vetter (2018).
\newblock Multiscale change point detection for dependent data.
\newblock {\em arxiv:1811.05956\/}.

\bibitem[\protect\citeauthoryear{Dette and Wu}{Dette and
  Wu}{2019}]{dettewu2019}
Dette, H. and W.~Wu (2019).
\newblock Detecting relevant changes in the mean of nonstationary processes - a
  mass excess approach.
\newblock {\em Ann. Statist.\/}~{\em 47\/}(6), 3578--3608.

\bibitem[\protect\citeauthoryear{Fan and Gijbels}{Fan and
  Gijbels}{1996}]{FanGij1996}
Fan, J. and I.~Gijbels (1996).
\newblock Local polynomial modelling and its applications.
\newblock {\em Monographs on Statistics and Applied Probability. Chapman \&
  Hall/CRC\/}.

\bibitem[\protect\citeauthoryear{Fan}{Fan}{2004}]{Fan2003}
Fan, M. (2004).
\newblock Some inequalities for p-variations of martingales.
\newblock {\em Stochastic Processes and their Applications\/}~{\em 109\/}(2),
  189 -- 201.

\bibitem[\protect\citeauthoryear{Frick, Munk, and Sieling}{Frick
  et~al.}{2014}]{frick2014}
Frick, K., A.~Munk, and H.~Sieling (2014).
\newblock Multiscale change point inference.
\newblock {\em Journal of the Royal Statistical Society, Ser. B\/}~{\em
  76\/}(3), 495--580.

\bibitem[\protect\citeauthoryear{Fryzlewicz}{Fryzlewicz}{2018}]{fryzlewicz2018a}
Fryzlewicz, P. (2018).
\newblock Tail-greedy bottom-up data decompositions and fast multiple
  change-point detection.
\newblock {\em Ann. Statist.\/}~{\em 46\/}(6B), 3390--3421.

\bibitem[\protect\citeauthoryear{Hansen}{Hansen}{2008}]{Hansen2008}
Hansen, B.~E. (2008).
\newblock Uniform convergence rates for kernel estimation with dependent data.
\newblock {\em Econometric Theory\/}~{\em 24\/}(3), 726–748.

\bibitem[\protect\citeauthoryear{Hastie, Tibshirani, and Friedman}{Hastie
  et~al.}{2009}]{HasTibFri09}
Hastie, T., R.~Tibshirani, and J.~Friedman (2009).
\newblock {\em The elements of statistical learning\/} (Second ed.).
\newblock Springer Series in Statistics. Springer, New York.
\newblock Data mining, inference, and prediction.

\bibitem[\protect\citeauthoryear{Horváth, Kokoszka, and Steinebach}{Horváth
  et~al.}{1999}]{Horvath1999}
Horváth, L., P.~Kokoszka, and J.~Steinebach (1999).
\newblock Testing for changes in multivariate dependent observations with an
  application to temperature changes.
\newblock {\em Journal of Multivariate Analysis\/}~{\em 68\/}(1), 96 -- 119.

\bibitem[\protect\citeauthoryear{Hotz, Sch{\"u}tte, Sieling, Polupanow,
  Diederichsen, Steinem, and Munk}{Hotz et~al.}{2013}]{hotz2013}
Hotz, T., O.~Sch{\"u}tte, H.~Sieling, T.~Polupanow, U.~Diederichsen,
  C.~Steinem, and A.~Munk (2013).
\newblock Idealizing ion channel recordings by jump segmentation and
  statistical multiresolution analysis.
\newblock {\em {IEEE} Trans. Nanobiosci.\/}~{\em 12}, 376--386.

\bibitem[\protect\citeauthoryear{Jandhyala, Fotopoulos, MacNeill, and
  Liu}{Jandhyala et~al.}{2013}]{Jandhyala2013}
Jandhyala, V., S.~Fotopoulos, I.~MacNeill, and P.~Liu (2013).
\newblock Inference for single and multiple change-points in time series.
\newblock {\em Journal of Time Series Analysis\/}~{\em 34\/}(4), 423--446.

\bibitem[\protect\citeauthoryear{Johnston}{Johnston}{1982}]{Johnston1982}
Johnston, G.~J. (1982).
\newblock Probabilities of maximal deviations for nonparametric regression
  function estimates.
\newblock {\em Journal of Multivariate Analysis\/}~{\em 12\/}(3), 402 -- 414.

\bibitem[\protect\citeauthoryear{Karl, Knight, and Plummer}{Karl
  et~al.}{1995}]{karl1995}
Karl, T.~R., R.~W. Knight, and N.~Plummer (1995).
\newblock Trends in high-frequency climate variability in the twentieth
  century.
\newblock {\em Nature\/}~{\em 377\/}(6546), 217.

\bibitem[\protect\citeauthoryear{Kirch, Muhsal, and Ombao}{Kirch
  et~al.}{2015}]{Kirch2015}
Kirch, C., B.~Muhsal, and H.~Ombao (2015).
\newblock Detection of changes in multivariate time series with application to
  eeg data.
\newblock {\em Journal of the American Statistical Association\/}~{\em
  110\/}(511), 1197--1216.

\bibitem[\protect\citeauthoryear{Li, Liu, Wang, and Wu}{Li
  et~al.}{2017}]{Li2017}
Li, D., W.~Liu, Q.~Wang, and W.~B. Wu (2017).
\newblock Simultaneous confidence bands in nonlinear regression models with
  nonstationarity.
\newblock {\em Statistica Sinica\/}~{\em 27\/}(3), 1385--1400.

\bibitem[\protect\citeauthoryear{Piterbarg}{Piterbarg}{2012}]{piterbarg2012}
Piterbarg, V. (2012).
\newblock {\em Asymptotic Methods in the Theory of Gaussian Processes and
  Fields}.
\newblock Translations of Mathematical Monographs. American Mathematical
  Society.

\bibitem[\protect\citeauthoryear{Priestley and Subba~Rao}{Priestley and
  Subba~Rao}{1969}]{Priestley1969}
Priestley, M.~B. and T.~Subba~Rao (1969).
\newblock A test for non-stationarity of time series.
\newblock {\em Journal of the Royal Statistical Society\/}~{\em 31\/}(1),
  140--149.

\bibitem[\protect\citeauthoryear{Proksch}{Proksch}{2016}]{Proksch2016}
Proksch, K. (2016).
\newblock On confidence bands for multivariate nonparametric regression.
\newblock {\em Annals of the Institute of Statistical Mathematics\/}~{\em
  68\/}(1), 209--236.

\bibitem[\protect\citeauthoryear{Schucany and Sommers}{Schucany and
  Sommers}{1977}]{schucany1977}
Schucany, W.~R. and J.~P. Sommers (1977).
\newblock Improvement of kernel type density estimators.
\newblock {\em Journal of the American Statistical Association\/}~{\em
  72\/}(358), 420--423.

\bibitem[\protect\citeauthoryear{Sharma, Swayne, and Obimbo}{Sharma
  et~al.}{2016}]{Sharma2016}
Sharma, S., D.~A. Swayne, and C.~Obimbo (2016).
\newblock Trend analysis and change point techniques: a survey.
\newblock {\em Energy, Ecology and Environment\/}~{\em 1\/}(3), 123--130.

\bibitem[\protect\citeauthoryear{Truong, Oudre, and Vayatis}{Truong
  et~al.}{2020}]{truong2020}
Truong, C., L.~Oudre, and N.~Vayatis (2020).
\newblock Selective review of offline change point detection methods.
\newblock {\em Signal Processing\/}~{\em 167}, 107299.

\bibitem[\protect\citeauthoryear{van~der Vaart and Wellner}{van~der Vaart and
  Wellner}{1996}]{VanWel96}
van~der Vaart, A. and J.~Wellner (1996).
\newblock {\em Weak Convergence and Empirical Processes}, Volume~1 of {\em
  Springer series in statistics}.
\newblock Springer Science+Business Media New York.

\bibitem[\protect\citeauthoryear{Vogt and Dette}{Vogt and
  Dette}{2015}]{Dette2014}
Vogt, M. and H.~Dette (2015).
\newblock Detecting gradual changes in locally stationary processes.
\newblock {\em The Annals of Statistics\/}~{\em 43}, 713--740.

\bibitem[\protect\citeauthoryear{Wolfe and Schechtman}{Wolfe and
  Schechtman}{1984}]{Wolfe1984}
Wolfe, D.~A. and E.~Schechtman (1984).
\newblock Nonparametric statistical procedures for the changepoint problem.
\newblock {\em Journal of Statistical Planning and Inference\/}~{\em 9\/}(3),
  389 -- 396.

\bibitem[\protect\citeauthoryear{Woodall and Montgomery}{Woodall and
  Montgomery}{1999}]{woodmont1999}
Woodall, W.~H. and D.~C. Montgomery (1999).
\newblock Research issues and ideas in statistical process control.
\newblock {\em Journal of Quality Technology\/}~{\em 31\/}(4), 376--386.

\bibitem[\protect\citeauthoryear{Woodall and Montgomery}{Woodall and
  Montgomery}{2014}]{WoodallMontgomery2014}
Woodall, W.~H. and D.~C. Montgomery (2014).
\newblock Some current directions in the theory and application of statistical
  process monitoring.
\newblock {\em Journal of Quality Technology\/}~{\em 46\/}(1), 78--94.

\bibitem[\protect\citeauthoryear{Wu}{Wu}{2005}]{Wu2005}
Wu, W.~B. (2005).
\newblock Nonlinear system theory: Another look at dependence.
\newblock {\em Proceedings of the National Academy of Sciences\/}~{\em
  102\/}(40), 14150--14154.

\bibitem[\protect\citeauthoryear{Wu}{Wu}{2007}]{Wu2007}
Wu, W.~B. (2007).
\newblock Strong invariance principles for dependent random variables.
\newblock {\em Ann. Probab.\/}~{\em 35\/}(6), 2294--2320.

\bibitem[\protect\citeauthoryear{Wu and Zhao}{Wu and Zhao}{2007}]{WuZhao2007}
Wu, W.~B. and Z.~Zhao (2007).
\newblock Inference of trends in time series.
\newblock {\em Journal of the Royal Statistical Society: Series B (Statistical
  Methodology)\/}~{\em 69\/}(3), 391--410.

\bibitem[\protect\citeauthoryear{Xia}{Xia}{1998}]{Xia1998}
Xia, Y. (1998).
\newblock Bias-corrected confidence bands in nonparametric regression.
\newblock {\em Journal of the Royal Statistical Society. Series B (Statistical
  Methodology)\/}~{\em 60\/}(4), 797--811.

\bibitem[\protect\citeauthoryear{Zhao and Wu}{Zhao and Wu}{2008}]{zhao2008}
Zhao, Z. and W.~B. Wu (2008, 08).
\newblock Confidence bands in nonparametric time series regression.
\newblock {\em Ann. Statist.\/}~{\em 36\/}(4), 1854--1878.

\bibitem[\protect\citeauthoryear{Zhou}{Zhou}{2013}]{ZhouZhou2013}
Zhou, Z. (2013).
\newblock Heteroscedasticity and autocorrelation robust structural change
  detection.
\newblock {\em Journal of the American Statistical Association\/}~{\em 108},
  726--740.

\bibitem[\protect\citeauthoryear{Zhou and Wu}{Zhou and Wu}{2010}]{ZhouWu2010}
Zhou, Z. and W.~B. Wu (2010).
\newblock Simultaneous inference of linear models with time varying
  coefficients.
\newblock {\em Journal of the Royal Statistical Society: Series B (Statistical
  Methodology)\/}~{\em 72\/}(4), 513--531.

\end{thebibliography}

\end{document}